\tikzset{
    base/.style = {rectangle, rounded corners, draw=black, minimum width=4cm, minimum height=1cm, text centered, font=\sffamily},
    box/.style = {rectangle, rounded corners, minimum width=5cm, minimum height=2cm, text centered, draw=gray, fill=black!15},
    header/.style={
    label={[rectangle, fill=white, draw, anchor=center, minimum width=2cm, node font=\ttfamily, name=\tikzlastnode-header]north:{#1}}}}
\tikzstyle{arrow} = [very thick, ->, >=stealth]
\begin{document}

\title{Collet-Eckmann type conditions and conformal welding of unicritical quadratic laminations}
\author{Linhang Huang}
\address{Department of Mathematics, University of Washington, Seattle, WA 98203}
\email{lhhuang@uw.edu}
\pagestyle{plain}

\begin{abstract}
    In this paper, we introduce a Collet-Eckmann type condition for the unicritical laminations on the unit circle. We prove that this condition implies the lamination admits a Hölder continuous conformal welding which produces a Julia set for some unicritical polynomial. In consequence, we present a new proof that almost all angles on the unit circle produce quadratic polynomials with Hölder Fatou components, without the use of Beurling’s theorem.
\end{abstract}

\maketitle

\section{Introduction}
\subsection{Motivation \& Background}
We investigate the geometry of Julia sets and their corresponding polynomials. We focus on the cases where polynomials are unicritical ($p(z)=z^d+c$) and Julia sets are topologically "trees" (locally connected unique geodesic spaces, or \textit{dendrites}). In \cite{collet1983positive}, the authors introduced what is now called the \textit{Collett-Eckmann (CE) condition} for rational maps. In \cite{graczyk1998collet}, J. Graczyk and S. Smirnov proved that CE condition is sufficient for Fatou components of the polynomials to be Hölder domains. The converse of this statement is proven in \cite{przytycki2000holder}. In particular, this implies that the Julia sets for such polynomials are (quasi-)conformally removable \cite{jones2000removability}.  S. Smirnov \cite{smirnov2000symbolic} has also shown that with the exception of a set of zero harmonic measure, all the points on the boundary of the Mandelbrot set correspond to CE polynomials. This shows that CE is a generic condition for unicritical polynomials and implies nice properties for their Julia sets. The CE condition has been extensively studied for rational maps. One might refer to \cite{aspenberg2013collect,aspenberg2024slowly,astorg2019collet, bernard1994dynamique, blokh1998collet, dujardin2008distribution, gao2014summability, graczyk2000harmonic, mihalache2007collet, przytycki1995measure, przytycki1998iterations, przytycki1999rigidity} and references therein.

\begin{figure}[ht]
    \begin{tikzpicture}[scale=0.7]
        \node[inner sep=0pt] (julia) at (4,0){\includegraphics[scale=1.4]{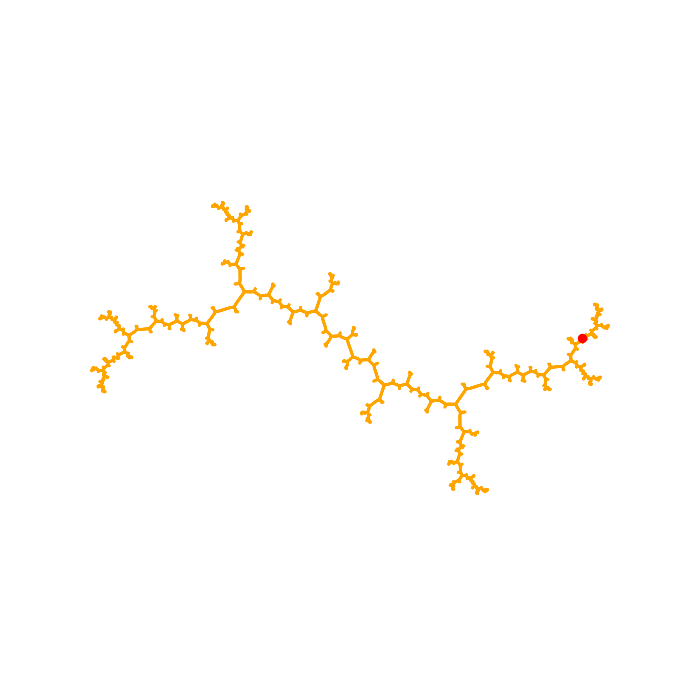}};
        \node at (-2.6,2.9) {\textcolor{red}{$\alpha$}};
        \node[inner sep=0pt] (lami) at (-4,0){\includegraphics[scale=0.3]{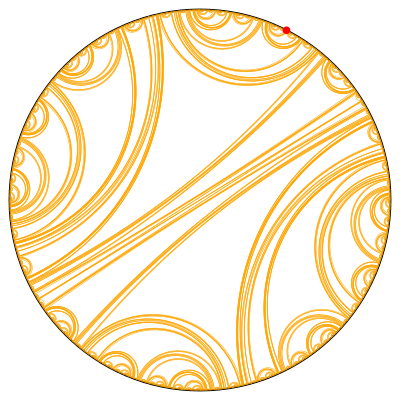}};
        \draw[->,very thick] (-0.5,0) -- (0.5,0) node[pos=0.5, above] {$\varphi_c$};
        \node at (6.8,0.4) {\textcolor{red}{$c$}};
    \end{tikzpicture}
    \vspace{-20pt}
    \caption{Quadratic lamination $\approx^{(2)}_\alpha$ with $\alpha \approx e^{0.35\pi i}$ and its corresponding quadratic Julia set $J_c$ with $c\approx-0.326+0.102i$. Riemann map $\varphi_c$ maps $\alpha$ to $c$ and "glues" $\approx_\alpha^{(2)}$.}
    \label{fig:example}
\end{figure}

There is a topological invariant analogue of CE called the \textit{topological Collett-Eckmann (TCE) condition}, constructed in \cite{nowicki1998topological} and \cite{przytycki1996porosity}. For unicritical polynomials, CE and TCE are equivalent (see for example, \cite{przytycki2003equivalence}). However, $TCE$ is generally a stronger condition for rational maps. It has been also extensively studied in the context of dynamical systems acted by hyperbolic rational functions \cite{bylund2022equivalence, ji2023non}. In \cite{przytycki2007statistical}, the authors show that every rational map satisfying TCE possesses a unique invariant measure carrying certain statistical properties of the map. In \cite{comman2011large}, it is shown that TCE rational maps satisfy some large-deviation principles.

Using the Riemann mapping, one can translate the geometry information of a Julia set into a flat equivalence relation (or a \textit{lamination}) on the unit circle $\T$. This was first studied by B. Thurston in the mid 1980's, whose manuscript was unpublished until included in \cite{schleicher2009complex} (see also \cite{bhattacharya2021unicritical}). Compared to the intricate geometric features of the Julia set, the corresponding lamination can also be described easily in terms of symbolic dynamics, and is parametrized by points on $\T$ and degree of the unicritical polynomial (see the example in Figure \ref{fig:example}). A. Douady and J. Hubbard \cite{douady1984etude} first discovered that polynomial Julia sets can be encored symbolically using what are now called the \textit{Hubbard trees}. Since then, a lot of progress has been made studying polynomial Julia sets using symbolic dynamics. One might refer to \cite{bielefeld1992classification,  blokh2012density, blokh2013cubic, blokh1999growing,blokh2013laminations, blokh2017combinatorial,blokh2020laminational,  childers2007wandering, kaffl2006hubbard,keller1994symbolic, kiwi2002wandering, mcmullen1994complex, poirier1993post, zeng2020criterion}, and references therein.

\subsection{Statement of Results}
Given a degree $d\geq 2$, we denote the aforementioned one-parameter family of laminations by $\{\approx_\alpha^{(d)}\}_{\alpha\in\T}$ (or just $\{\approx_{\alpha}\}_{\alpha\in\T}$ when the degree is clear in the context). The main goal of this paper is to find a "generic" condition analogous to CE on  $\{\approx_\alpha^{(d)}\}_{\alpha\in\T}$. We want such a condition to guarantee the conformal welding reconstruction and ensure that the reconstructed Julia sets are \textit{Hölder trees} (dendrites whose complements are Hölder domains).  We draw inspiration from \cite{lin2019conformal} and define the \textit{combinatorial Collett-Eckmann (CCE) condition} on $\{\approx_\alpha^{(d)}\}_{\alpha\in\T}$, which is sufficient for reconstructing Hölder tree Julia sets. 

\begin{theorem}\label{main_2}
    If $\alpha \in \T$ satisfies the combinatorial Collet-Eckmann (CCE) Condition for degree $2$, then the lamination $\approx^{(2)}_\alpha$ admits a conformal embedding, which reconstructs the corresponding Hölder tree Julia set, along with its unicritical polynomial $p_c(z)= z^2+c$.
\end{theorem}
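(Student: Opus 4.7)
The plan is to construct the embedding through a controlled conformal welding adapted to the combinatorics of $\approx_\alpha^{(2)}$, along the lines of \cite{lin2019conformal}, upgraded to handle an infinite lamination coming from a typically non-pre-periodic $\alpha$. The argument should proceed in four stages.

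First, I would translate the CCE hypothesis into metric estimates on the lamination itself. Since CCE is designed as the combinatorial mirror of CE, it should give exponential lower bounds on the lengths of the forward iterates (under angle doubling) of the critical chord of $\approx_\alpha^{(2)}$, relative to suitable ancestors. From these length estimates one extracts uniform lower bounds on the conformal moduli of the annuli in $\mathbb{D}$ separating nested critical gaps; these moduli are the combinatorial analogue of the shrinking-of-components estimates used in \cite{graczyk1998collet}.

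Second, I would construct the welding homeomorphism. Each leaf $\ell$ of $\approx_\alpha^{(2)}$ prescribes an identification of the two boundary arcs of $\T$ it cuts off, and these identifications must be simultaneously realized by a single Jordan dendrite. Using the gap-modulus estimates from the previous step, I would show that the family of identifications is uniformly quasisymmetric (or admits a David extension) with distortion bounds summing in a Carleson-type fashion, so that finite sublamination weldings approximate a well-defined limiting conformal embedding of the lamination quotient into $\mathbb{C}$.

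Third, once the dendrite $K \subset \mathbb{C}$ and its uniformization $\varphi \colon \widehat{\mathbb{C}} \setminus \overline{\mathbb{D}} \to \widehat{\mathbb{C}} \setminus K$ are in hand, the doubling map $z \mapsto z^2$ descends through $\varphi$ to a continuous degree-two self-map $f$ of $K$ that is conformal on the complement. Conformal removability of Hölder dendrites \cite{jones2000removability} glues the two pieces of $f$ into an entire degree-two map, which after an affine normalization is some $p_c(z) = z^2 + c$ with $c = \varphi(\alpha)$, as suggested by Figure \ref{fig:example}.

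The principal obstacle is the Hölder regularity of $\varphi$, which is what distinguishes this construction from an abstract welding; quasisymmetry of the identifications alone gives only a local quasiconformal picture, not a global Hölder exponent. My strategy is to couple the exponential CCE length estimates with a Koebe distortion and telescoping argument along the nest of gaps associated to the post-critical set: each pull-back of the Riemann map contracts by a definite factor, and iterating the gap-modulus bound yields a uniform Hölder exponent on the post-critical portion of $K$, which then propagates to all of $K$ by the transitivity of the doubling map on $\T$. Carrying out this telescoping step purely on the lamination side, without a pre-existing polynomial to pull back from, is the main technical difficulty, and will dictate the precise quantitative form the CCE condition must take.
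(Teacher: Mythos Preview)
Your proposal misreads what the CCE condition actually is, and as a result the first stage of your plan has nothing to attach to. In this paper CCE (Definition~\ref{CCE}) is \emph{not} an exponential growth condition on critical chord lengths; it is already packaged as the existence of a covering $\{D(x)\}_{x\in\T}$ by gluing links such that (i) each $D(x)$ contains an $(N,C,r)$-nice gluing circuit around $x$, and (ii) the encounter numbers $N(x,n_j)$ are bounded by $M$ along a sequence $n_j\le Pj$. There is no ``translation into metric estimates on the lamination'' to perform, no annulus moduli to extract, and no quasisymmetric or David extension to build. Your stages one and two are solving a problem the hypothesis has already solved.

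The paper's proof is correspondingly short: the only real content is Lemma~\ref{copying}, which shows that a nice gluing circuit in $D(h^n(x))$ can be pulled back along the sequence \eqref{seq} to a nice gluing circuit in $D^{(n)}(x)$ around $x$, with the parameters degrading only when $\alpha$ lands in an intermediate $D^{(n-i)}(h^i(x))$; the encounter-number bound $N(x,n_j)\le M$ then caps the total degradation at $(2^MN,4^MC,2^{-n_j}r)$. Feeding these circuits at scales $2^{-n_j}r$ with $n_j\le Pj$ into the black-box Theorem~\ref{peter_gluing} (Theorem~1.1 of \cite{lin2018quasisymmetry}) immediately gives the H\"older conformal welding, and conformal removability of H\"older trees identifies the result with a Julia set. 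Your stages three and four (descent of $z\mapsto z^2$ to a polynomial via removability) are correct in spirit and do match the paper, but the substantive analytic work you propose---telescoping Koebe distortion along post-critical nests to manufacture a H\"older exponent---is entirely absorbed into Theorem~\ref{peter_gluing} and need not be redone here.
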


The author does not see any obstruction against generalizing this theorem to higher degrees. We will only prove the case of degree 2 as the notations are much easier to work with. We shall also show that for the degree $2$ case, CCE condition is satisfied by generic points on the unit circle.

\begin{theorem}\label{main_theo}
    Almost all $\alpha \in\T$ satisfies the CCE condition for degree $2$. Thus $\approx_\alpha$ admits a H\"older continuous conformal welding, which reconstructs the corresponding Hölder tree Julia set, along with its unicritical polynomial $p_c(z)= z^2+c$.
\end{theorem}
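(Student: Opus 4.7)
The plan is to split the statement into two parts: (i) the set of $\alpha \in \T$ satisfying CCE has full Lebesgue measure on the circle, and (ii) for any such $\alpha$, the H\"older welding conclusion holds. Part (ii) is immediate from Theorem \ref{main_2}, so the real work is in (i).

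First I would interpret CCE as a quantitative recurrence condition on the forward orbit of $\alpha$ under the doubling map $\sigma(z) = z^2$, with $\alpha$ playing the role of the combinatorial critical value. By analogy with the classical Collet-Eckmann condition, the relevant bound should take the form
\[
|\sigma^n(\alpha) - \alpha| \geq C \rho^n
\]
for some $C > 0$, $\rho \in (0,1)$ and all sufficiently large $n$, possibly supplemented by analogous bounds relative to each member of the $\approx_\alpha$-equivalence class of $\alpha$.

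The second step is a Borel-Cantelli computation showing this recurrence holds Lebesgue-a.e. Writing $\alpha = e^{2\pi i \theta}$, the event $|\sigma^n(\alpha) - \alpha| < \rho^n$ is equivalent, up to an absolute constant, to $\|(2^n - 1)\theta\| < \rho^n$, where $\|\cdot\|$ denotes distance to the nearest integer. Since $\theta \mapsto (2^n - 1)\theta \bmod 1$ is a $(2^n - 1)$-to-$1$ Lebesgue-measure-preserving covering of $\mathbb{R}/\mathbb{Z}$, the bad set
\[
B_n := \{\alpha \in \T : |\sigma^n(\alpha) - \alpha| < \rho^n\}
\]
has Lebesgue measure $\asymp \rho^n$, which is summable in $n$. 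Borel-Cantelli then yields that Lebesgue-a.e.\ $\alpha$ lies in only finitely many $B_n$, giving the required recurrence. If CCE demands the bound simultaneously for every member of the $\approx_\alpha$-class of $\alpha$, a union bound over this class (whose cardinality is at most polynomial in $n$) preserves summability.

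In my view the main obstacle is the calibration of $\rho$ and the translation between the combinatorial formulation of CCE, stated in terms of leaves, gaps, and combinatorial distances of $\approx_\alpha$, and the Euclidean recurrence estimate on $\T$: one needs $\rho$ small enough that summability holds, yet large enough that CCE is actually implied. One must also discard the countable (hence null) set of pre-periodic $\alpha$ and verify that the measure bound on $B_n$ extends uniformly across all combinatorial return events demanded by the precise definition of CCE. Once these reductions are in place, combining generic CCE with Theorem \ref{main_2} yields the theorem.
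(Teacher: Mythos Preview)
Your proposal has a genuine gap: you have misidentified what the CCE condition is. In this paper CCE (Definition~\ref{CCE}) is \emph{not} a metric recurrence bound of the form $|\sigma^n(\alpha)-\alpha|\geq C\rho^n$. It is the existence of a covering $\{D(y)\}_{y\in\T}$ of the circle by gluing links such that (1) each $D(x)$ contains an $(N,C,r)$-nice gluing circuit around $x$, and (2) for \emph{every} $x\in\T$ there is a sequence $n_j\leq Pj$ with encounter numbers $N(x,n_j)\leq M$. Your Borel--Cantelli computation controls only how often the orbit of $\alpha$ returns close to $\alpha$ in the Euclidean metric; it says nothing about constructing a covering, nothing about nice gluing circuits (which involve measures of bounded logarithmic energy and lamination-respecting bijections), and nothing about the encounter-number bound holding uniformly for all $x$, not just for $x=\alpha$.

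The paper's proof is accordingly quite different in structure. The full-measure statement is obtained not by a direct Borel--Cantelli argument but by invoking Smirnov's result (Proposition~\ref{SR}) that strongly recurrent angles have Hausdorff dimension zero, together with the easy fact that weakly pre-periodic angles have measure zero. For $\alpha$ that is neither SR nor weakly pre-periodic, Theorem~\ref{smirn} gives the semi-CCE property: any $L$-digit-fixing covering automatically satisfies the encounter-number bound (2). The substantial work, occupying Sections~\ref{GCS_sec} and~\ref{theo_sec}, is to build a concrete covering $\{C^{L_\alpha}(x)\}$ by generalized cylinder sets and to verify that it is both $L$-digit-fixing (Proposition~\ref{digit_matching}) and carries nice gluing circuits (Proposition~\ref{good_scale}). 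Neither of these ingredients is addressed by a slow-recurrence bound, and condition~(1) in particular requires the construction of gluing pairs of periodic points and Cantor-type measures (Lemma~\ref{periodic}), which has no analogue in your sketch.
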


In Section~\ref{Pre_sec}, we will go through the preliminaries needed to prove the results and also give the overview of the proofs. We will introduce the definition of the CCE condition and prove Theorem~\ref{main_2} in Section~\ref{CCE_sec}. In Section~\ref{SR_sec}, we will discuss the \textit{Strongly Recurrent (SR)} condition devised by S. Smirnov in \cite{smirnov2000symbolic}, which is associated with the \textit{semi-Combinatorial Collet-Eckmann (semi-CCE)} condition which we will define. In order to combine CCE and semi-CCE to prove Theorem~\ref{main_theo}, we will introduce the notion of \textit{Generalized Cylinder Sets} in Section~\ref{GCS_sec}. With them, we will prove Theorem~\ref{main_theo} in Section~\ref{theo_sec}. One might find the flowchart in Figure~\ref{fig:flow_chart} helpful in understanding the layout of the paper.

\begin{figure}[ht]
    \centering
    \scalebox{0.85}{
    \begin{tikzpicture}
        \node (non-PP) [base] {$\alpha$ not pre-periodic};
        \node (non-SR) [base, right = of non-PP] {$\alpha$ not strongly recurrent};
        \node (non-WPP) [base, right = of non-SR] {$\alpha$ not weakly pre-periodic};
        \node (semi-CCE) [box, header = semi-CCE, below= 4cm of non-SR] {\shortstack{$\forall$ covering: digit-fixing \\$\Rightarrow$ low encounter numbers}};
        \node (NG) [base, left = of semi-CCE] {\shortstack{generalized cylinder sets \\satisfy nice gluing property}};
        \node (DF) [base, below right = 2cm and 4.5cm of non-PP] {\shortstack{generalized cylinder sets \\are digit-fixing}};
        \node (CCE) [box, header = CCE, below= 2cm of semi-CCE] {\shortstack{$\exists$ covering: nice gluing \\\& low encounter number}};
        \node (hölder) [base, below = of CCE] {Hölder Weldability};

        \draw [arrow] (non-WPP) --++ (0,1) -| (non-PP);
        \draw [arrow] (non-WPP.270) --++ (0,-1.3) -| (DF);
        \draw [arrow] (non-SR.290) --++ (0,-1.3) -| (DF);
        \draw [arrow] (non-SR.250) --++ (0,-1.3) --++ (-2.39,0) --++ (0,-1.2) -| (semi-CCE-header);
        \draw [arrow] (non-PP.270) --++ (0,-1.3) --++ (2.45,0) --++ (0,-1.2) -|(NG);
        \draw [arrow] (NG) --++ (0,-2) -| (CCE-header);
        \draw [arrow] (DF) --++ (0,-4.5) -| (CCE-header);
        \draw [arrow] (semi-CCE) -- (CCE-header);
        \draw [arrow] (CCE) -- node [right,midway] {Theorem~\ref{main_2}}(hölder);

        \node  [above = 2cm of NG]{Proposition~\ref{good_scale}};
        \node  [above = 1.55cm of semi-CCE]{Theorem~\ref{smirn}};
        \node  [above = 0.65cm of DF]{Proposition~\ref{digit_matching}};
        
    \end{tikzpicture}}
    \caption{Outline for the proof of Theorem~\ref{main_theo}}
    \label{fig:flow_chart}
\end{figure}
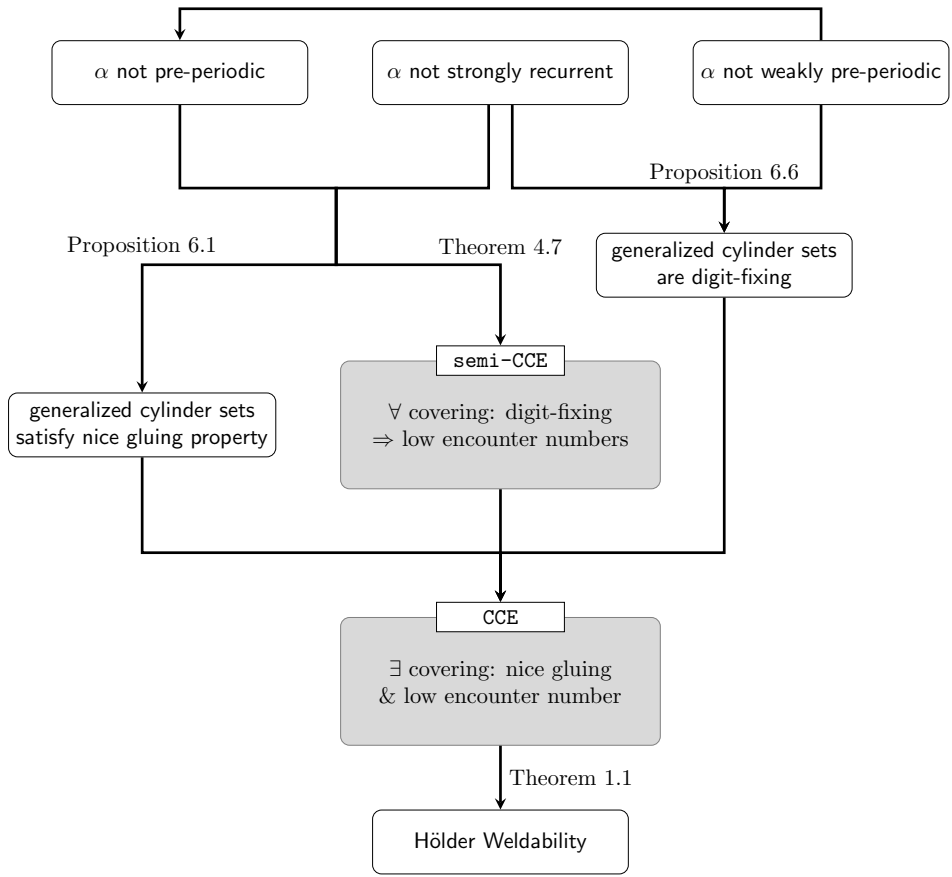

\textbf{Acknowledgments.} The author would like to thank Steffen Rohde for suggesting this project. The author is also grateful to Victor Medina and Isaiah Siegl for inspiring discussions, to Peter Lin for offering helpful references, and to Donald Marhshall for providing the codes to generate the Julia set in Figure~\ref{fig:example}.

\section{Preliminaries} \label{Pre_sec}
We denote the Riemann sphere as $\hat \C $ and the unit circle as $\T$. For a Borel set $U\subseteq \T$, we denote its $\mathcal{H}^1$-measure as $|U|$. Given an alphabet $\mathcal{A}$, we denote the set of all finite words of this alphabet $\mathcal{A}^*$ and the set of all infinite words $\mathcal{A}^\infty$. If $w$ is a word in $\mathcal{A}^*\cup \mathcal{A}^\infty$, we denote its $n$-th letter by $w[n]\in \mathcal{A}$ and with some abuse of notation, we denote its length by $|w|\in \N\cup\set{\infty}$. Given a word $w$ and an interval $I\subseteq (0,\infty)$, we denote the subword with letters on the indices $I\cap \N$ as $w|_I$. Given $a,b\in\T$, we denote the open proper arc between the two by $a\frown b$. In the case that $a = -b$, we let $a\frown b$ be the arc going from $a$ to $b$ counterclockwise.

\subsection{Laminations on $\T$ and Julia sets}\label{lamin}
Given a unicritial polynomial $p_c(z)=z^d+c$ and its corresponding Julia set $J_c$, if $J_c$ is a dendrite, by the Riemann mapping theorem, we can find a unique Riemann map $\varphi_c: \hat \C\backslash \overline \D \to \hat \C\backslash J_c$ such that $\varphi_c(\infty)=\infty$ and $\varphi_c(z) = z + O(1/z)$ near $\infty$ (we call it the \textit{hydrodynamically normalized} Riemann map). Note that $\varphi_c^{-1}\circ p_c\circ\varphi_c$ is a $d$-to-$1$ map of $\hat \C \backslash \overline{\D}$ that is $z^d + o(z^d)$ near $\infty$. It follows that \begin{equation*}
    \varphi_c^{-1}\circ p_c\circ\varphi_c(z) = z^d.
\end{equation*}
Since $J_c$ is locally connected, by the Carathéodory-Torhorst theorem \cite{torhorst1921uber}, the map $\varphi_c$ has a unique continuous extension which is surjective from $\hat \C \backslash \D$ onto $\hat \C$.  The extended $\varphi_c$ defines a equivalence relation $\equiv_c$ on $\T$ by \begin{equation*}
    x\equiv_c y \quad \Leftrightarrow \quad \varphi_c(x) = \varphi_c(y).
\end{equation*}

One can check that $\equiv_c$ is \textit{forward-invariant} in the sense that $x^d\equiv_c y^d$ whenever $x\equiv_c y$. Now given an angle $\alpha \in \T$, we set $\star_{d+1}=\star_1,\dots, \star_d$ be the $d$ roots of $z^d=\alpha$ ordered counterclockwise from $1$. Note that $z\mapsto z^d$ maps each $\star_i \frown \star_{i+1}$ bijectively to $\T \backslash\set{\alpha}$. We let $\widetilde{L_i}: \T \backslash\set{\alpha}\to \star_i \frown \star_{i+1}$ denote the inverse of such a map. We can define a new forward-invariant lamination $\sim_\alpha$ with the following:\begin{enumerate}
    \item We set $\set{\star_i}_{i=1}^d \subseteq [\star_i]_{\sim_\alpha}$.
    \item For $x\sim_\alpha y$ and $\alpha \notin \set{x,y}$, we set $\widetilde{L_i}(x)\sim_\alpha \widetilde{L_i}(y)$ for all $i=1,\dots, d$.
    \item If $x_n\to x$, $y_n\to y$ and $x_n \sim_\alpha y_n$ via (1) or (2), we set $x\sim_\alpha y$.
\end{enumerate}

We say an equivalence relation on $\T$ is a \textit{lamination} if it does not have crossings. The following lemma tells us that $\sim_\alpha$ is in fact a lamination.

\begin{lemma}[\cite{thurston2020degree}]\label{non-crossing}
    Suppose $x_1\sim_\alpha y_1$ and $x_2\sim_\alpha y_2$. If the line segment connecting $x_1$ and $y_1$ intersects that connecting $x_2$ and $y_2$, then $x_1\sim_\alpha y_1 \sim_\alpha x_2\sim_\alpha y_2$.
\end{lemma}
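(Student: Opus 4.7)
The plan is to induct on the stages of the construction of $\sim_\alpha$. Let $\sim^{(0)}$ be the equivalence relation generated by rule~(1), and inductively let $\sim^{(n+1)}$ be the smallest equivalence relation containing $\sim^{(n)}$ and closed under rules~(2) and~(3). Then $\sim_\alpha$ is the (possibly transfinite) union of the $\sim^{(n)}$, so it suffices to prove the non-crossing property stage by stage. At the base stage, the only non-trivial class is $\{\star_1,\dots,\star_d\}$, so every chord $[x,y]$ with $x\sim^{(0)} y$ has both endpoints in this single class, and the conclusion is immediate whenever two such chords meet.

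For the inductive step using rule~(2), consider chords $[x_1,y_1]$ and $[x_2,y_2]$ at stage $n+1$ arising as images of stage-$n$ chords $[x_1',y_1']$ and $[x_2',y_2']$ under $\widetilde{L_i}$ and $\widetilde{L_j}$ respectively. If $i\neq j$, the two chords lie in the disjoint pie-slice convex hulls of the arcs $\star_i\frown\star_{i+1}$ and $\star_j\frown\star_{j+1}$; since the chord endpoints are strictly interior to these arcs and three distinct points on $\T$ are never collinear, neither chord reaches the shared boundary vertex (when the arcs are adjacent), and hence the chords cannot intersect. If $i=j$, then $\widetilde{L_i}$ is a cyclic-order-preserving bijection from $\T\setminus\{\alpha\}$ onto the arc $\star_i\frown\star_{i+1}$, so the image chords cross if and only if the preimage chords do; the inductive hypothesis then forces all four preimage endpoints to lie in a single $\sim^{(n)}$-class, and a further application of rule~(2) transfers the relation to $x_1,y_1,x_2,y_2$.

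For the inductive step using rule~(3), suppose $(x_k,y_k)=\lim_m(x_k^{(m)},y_k^{(m)})$ for $k=1,2$ are limits of stage-$n$ pairs. If the limit chords cross properly, then since proper crossing is an open condition the pre-limit chords also cross for all sufficiently large $m$; the inductive hypothesis yields $x_1^{(m)}\sim y_1^{(m)}\sim x_2^{(m)}\sim y_2^{(m)}$, and rule~(3) passes this to the limit. The main obstacle is the boundary case where the limit chords meet only at a shared endpoint or merely ``kiss'' tangentially rather than cross properly. Shared endpoints reduce to the transitivity of $\sim_\alpha$, while limiting tangencies are handled either by a slight perturbation argument or, more cleanly, by observing that the collection of quadruples satisfying the non-crossing conclusion is closed in $\T^4$ and hence automatically stable under rule~(3). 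One must also verify that the transfinite iteration stabilizes, which follows from separability of $\T\times\T$.
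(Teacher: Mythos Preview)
The paper does not prove this lemma; it is quoted from \cite{thurston2020degree} without argument, so there is no in-paper proof to compare against and I assess your attempt on its own.

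Your overall strategy is the natural one, and the key observations (chords in distinct sectors cannot interact; $\widetilde{L_i}$ preserves cyclic order so crossing pulls back; transversal crossing is an open condition) are correct. Two genuine gaps remain. First, your stages are ill-defined as written: if $\sim^{(n+1)}$ is literally the \emph{smallest} equivalence relation containing $\sim^{(n)}$ and closed under rules (2) and (3), then already $\sim^{(1)}=\sim_\alpha$ and there is nothing left to induct on. The paper's clause ``$x_n\sim_\alpha y_n$ via (1) or (2)'' in rule (3) indicates a two-phase construction---close under (2) inductively, then take a single limit closure---and reorganising your argument that way also eliminates the transfinite bookkeeping. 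Second, in the rule-(2) step you only compare two chords that are \emph{both} fresh images $\widetilde{L_i}(\cdot)$, $\widetilde{L_j}(\cdot)$; you never treat a new chord against one carried over from an earlier stage. In particular the base leaves $[\star_i,\star_j]$ are never of the form $\widetilde{L_k}(\cdot)$ and, for $d\ge 3$, span several sectors. The missing case is easy---each open arc $\star_k\frown\star_{k+1}$ lies in a single closed half-plane determined by the line through $\star_i$ and $\star_j$, so a chord with both endpoints strictly in that arc cannot meet $[\star_i,\star_j]$---but without it the induction does not close. Your rule-(3) discussion is then fine once you note that two distinct chords with endpoints on $\T$ can only cross transversally, coincide, or share an endpoint; there is no genuine ``tangency'' case to handle.
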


Additionally, $\sim_\alpha$ is a $d$-invariant unicritical lamination as in \cite{thurston2020degree}. Since $z\mapsto z^d$ conjugates with $p_c$ via $\varphi_c$, by checking the external rays, we have the following:\begin{lemma}[\cite{lin2019conformal}]
    If $J_c$ is a dendrite, for any $\alpha\in\varphi_c^{-1}(\set{c})$, we have $\sim_\alpha \subseteq \equiv_c$.
\end{lemma}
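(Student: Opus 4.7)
The plan is to verify that $\equiv_c$ itself satisfies the three generation rules (1)--(3) defining $\sim_\alpha$; since $\sim_\alpha$ is the smallest equivalence relation closed under these rules, $\sim_\alpha \subseteq \equiv_c$ follows at once. Rule (1) holds because $\varphi_c(\alpha) = c$ and $p_c^{-1}(c) = \{0\}$: applying the semiconjugacy $p_c \circ \varphi_c = \varphi_c \circ (z \mapsto z^d)$ at each $\star_i$ forces $\varphi_c(\star_i) = 0$, so $\star_1 \equiv_c \cdots \equiv_c \star_d$. Rule (3) is immediate from the continuity of $\varphi_c$.

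The substantive step is rule (2). Fix $x \equiv_c y$ with $\alpha \notin \{x, y\}$ and set $w = \varphi_c(x) = \varphi_c(y)$. If $w = c$ then $p_c^{-1}(w) = \{0\}$, and $\varphi_c(\widetilde{L_i}(x)) = 0 = \varphi_c(\widetilde{L_i}(y))$ for every $i$. Otherwise $p_c^{-1}(w) = \{z_1, \dots, z_d\}$ consists of $d$ distinct points, and the semiconjugacy makes $i \mapsto \varphi_c(\widetilde{L_i}(x))$ a map from $\{1,\dots,d\}$ into $\{z_1,\dots,z_d\}$, with an analogous map for $y$; the task is to show these two maps agree. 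I would use two ingredients: first, that $\equiv_c$ is itself a lamination (a standard consequence of $\varphi_c$ being a continuous surjection onto the dendrite $J_c$, by the same planarity principle as Lemma~\ref{non-crossing}); second, that rule~(1) places the full $d$-gon $\{\star_1,\dots,\star_d\}$ in a single $\equiv_c$-class. Combined, these force every other $\equiv_c$-class to lie in a single closed arc $\overline{\star_i \frown \star_{i+1}}$, since a class meeting two different open arcs would cross a chord of this $d$-gon. Any "twisted" identification $\widetilde{L_i}(x) \equiv_c \widetilde{L_{i'}}(y)$ with $i \neq i'$ then sits in two distinct open arcs, so its $\equiv_c$-class must be the $d$-gon class, giving $w = p_c(0) = c$, contradicting $w \neq c$. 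The same argument applied to $x$ alone shows $i \mapsto \varphi_c(\widetilde{L_i}(x))$ is a bijection onto $\{z_1,\dots,z_d\}$, likewise for $y$; together with the impossibility of twisted identifications, the two bijections must coincide on every index, which is exactly rule~(2).

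The main obstacle is the planarity step: one needs that $\equiv_c$ is non-crossing, which is a general property of continuous quotients onto planar dendrites but merits a careful check in this setting. Once that lamination input is granted, the rest of rule~(2) is a clean bookkeeping argument using the $d$-to-$1$ combinatorics of preimages and the fact that the $d$-gon $\{\star_1,\dots,\star_d\}$ is the unique $\equiv_c$-class that can span multiple arcs.
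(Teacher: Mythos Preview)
Your proof is correct. The paper does not supply its own proof of this lemma but simply cites \cite{lin2019conformal}; the one-line remark preceding the statement (``by checking the external rays'') is precisely the mechanism you exploit via the semiconjugacy $p_c\circ\varphi_c=\varphi_c\circ(z\mapsto z^d)$ together with the non-crossing property of $\equiv_c$, so your argument is the natural expansion of what the paper leaves implicit.
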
 

We call $\varphi_c^{-1}(c)$ the \textit{special angles} of polynomial $p_c$.

\subsection{Symbolic Dynamics and Cylinder Sets} The lamination $\sim_\alpha$ is not easy to work with in our setting with how it is defined using limits. In this section, we will introduce an equivalent definition.
Given $\alpha \in\T$, let $\set{\star_i}\subseteq\T$ and $g_i:\T \backslash \set{\alpha}\to \star_i\frown \star_{i+1}$ be the points and functions defined above. 

\begin{definition}[Itinerary Maps, \cite{lin2018quasisymmetry}]
An \textit{itinerary map} $I^\alpha$ is a map from $\T$ to the set of infinite words with alphabet $\set{L_1,\dots, L_d,\star}$ such that for any $x\in\T$, \begin{equation*}
    I^\alpha(x)[n] := \begin{cases}
        \star,&\text{if $x^{d(n-1)} \in \set{\star_i}$},\\
        L_i,&\text{if $x^{d(n-1)} \in \star_i\frown \star_{i+1}$},
    \end{cases}\quad \text{for all $n>0$.}
\end{equation*}  
\end{definition}

In other words, $I^\alpha(x)$ tracks where $x$ jumps when iteratively applying $z\mapsto z^d$. One can check that \begin{enumerate}
    \item $I^\alpha(x)|_{[t+1,\infty)} = I^\alpha(x^{td})$.
    \item If $x\neq \alpha$, then $I^\alpha(\widetilde{L_i}(x)) = L_iI^\alpha(x)$.
    \item If $x$ is periodic under $z\mapsto z^d$, then $I^\alpha(x)$ is periodic. However, the period of $x$ can be a multiple of the period of $I^\alpha(x)$.
\end{enumerate} Therefore, we can define an equivalence relation $\approx_\alpha$ as follows \begin{equation*}
    x\approx_\alpha y \quad \Leftrightarrow \quad \text{for all $n\in\N$}, \quad I^\alpha(x)[n] = I^\alpha(y)[n] \quad \text{or} \quad \star\in \set{I^\alpha(x)[n],I^\alpha(y)[n]}.
\end{equation*}
One immediate consequence is that $\widetilde{L_i}(x) \approx_\alpha \widetilde{L_i}(y)$ if $x\approx_\alpha y$ and $\alpha \notin \set{x,y}$. That is, $\widetilde{L_i}$ preserves $\approx_\alpha$. This equivalence relation is the one in Theorems~\ref{main_2} and \ref{main_theo} and it satisfies the following:
\begin{lemma}[\cite{bandt2006symbolic, thurston2020degree}]\label{equi_lami}~\begin{enumerate}
    \item If $J_c$ is a dendrite, then for any $\alpha\in\varphi_c^{-1}(\set{c})$, we have $\equiv_c \subseteq \approx_\alpha$.
    \item We have that $\sim_\alpha \subseteq\approx_\alpha$. In the case when $\alpha$ is aperiodic under $z\mapsto z^d$, each equivalence class of $\approx_\alpha$ is finite and $\sim_\alpha=\approx_\alpha$.
\end{enumerate}
In particular, if $J_c$ is a dendrite and $c$ is aperiodic under $p_c$, then $\sim_\alpha$, $\approx_\alpha$ and $\equiv_c$ are the same lamination.
\end{lemma}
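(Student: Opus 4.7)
The plan is to prove (1) and (2) in turn, relying on the conjugacy $\varphi_c \circ (z\mapsto z^d) = p_c\circ \varphi_c$ extended continuously to $\T$, together with basic combinatorial properties of itineraries. For (1), given $\varphi_c(x)=\varphi_c(y)$, iterating the conjugacy gives $\varphi_c(x^{d^{n-1}})=\varphi_c(y^{d^{n-1}})$ for every $n\geq 1$. The geometric input I would use is that each $\star_i$ satisfies $\varphi_c(\star_i)=0$ (since $p_c(\varphi_c(\star_i))=c$ and $p_c^{-1}(c)=\set{0}$), and that the dendrite $J_c$ splits into exactly $d$ components upon removing the critical point $0$, with each component containing the image of one arc $\star_i\frown\star_{i+1}$. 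Then for each $n$, either one of $x^{d^{n-1}}, y^{d^{n-1}}$ is a $\star_i$ — yielding the letter $\star$ and fulfilling the $\approx_\alpha$-condition — or both lie in the same arc, giving identical letters $L_j$.

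For (2), I would first show $\approx_\alpha$ is closed in $\T\times\T$: if $x_n\to x$, $y_n\to y$ with $x_n\approx_\alpha y_n$, then at each index $m$ either $x^{d^{m-1}}$ or $y^{d^{m-1}}$ lies in $\set{\star_i}$ (forcing a $\star$ letter), or by continuity of $z\mapsto z^d$ the $m$-th itinerary letters of $x_n, y_n$ eventually coincide with those of $x, y$, so the $\approx_\alpha$ relation passes to the limit. The generators of $\sim_\alpha$ already lie in $\approx_\alpha$ (the $\star_i$'s trivially share the letter $\star$, and $\widetilde{L_i}$ preserves $\approx_\alpha$ by the observation just before the lemma), so closure yields $\sim_\alpha\subseteq\approx_\alpha$.

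For the aperiodic case, the pivotal observation is that $\star$ occurs at most once in any $I^\alpha(x)$: two occurrences would force $\alpha^{d^k}=\alpha$ for some $k>0$. I would then run a short case analysis on the $\star$-count in the itineraries of $x\approx_\alpha y$; using expansion of $z\mapsto z^d$ and the fact that $I^\alpha(\alpha)$ itself contains no $\star$, everything reduces to the situation where both itineraries have their $\star$ at a common position $n_0$. There, $x=\widetilde{L_{i_1}}\cdots\widetilde{L_{i_{n_0-1}}}(\star_{j_x})$ and $y=\widetilde{L_{i_1}}\cdots\widetilde{L_{i_{n_0-1}}}(\star_{j_y})$ with identical $\widetilde{L}$-prefix, and $\star_{j_x}\sim_\alpha\star_{j_y}$ combined with $\widetilde{L_i}$-invariance of $\sim_\alpha$ yields $x\sim_\alpha y$. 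The same description bounds each $\approx_\alpha$-class by $d$.

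The main obstacle is the topological fact backing (1) — the correspondence between arcs of $\T\setminus\set{\star_i}$ and components of $J_c\setminus\set{0}$. When $c$ has multiple external arguments, $\varphi_c^{-1}(0)$ strictly contains $\set{\star_i}$, so some $L_k$ letters correspond to iterates that nonetheless land at $0$ under $\varphi_c$, and one must argue via the local degree of $p_c$ at $0$ (exactly $d$) combined with standard dendrite topology, rather than naive preimage counting.
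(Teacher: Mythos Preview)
The paper does not prove this lemma itself; it is quoted from the cited references. Your outline is sound for $\sim_\alpha\subseteq\approx_\alpha$ (closure plus $\widetilde{L_i}$-invariance), but the other parts have genuine gaps. For (1), the dichotomy ``either one of $x^{d^{n-1}},y^{d^{n-1}}$ is a $\star_i$, or both lie in the same arc'' fails in exactly the situation you flag: if $c$ has a second external angle $\beta\neq\alpha$, the $d$-th roots of $\beta$ lie in $\varphi_c^{-1}(0)\setminus\{\star_i\}$ and are spread across different arcs, giving $\equiv_c$-equivalent points with distinct non-$\star$ first itinerary letters. Local degree at $0$ does not repair this --- the failure is in the circle partition by $\{\star_i\}$, not in the branched cover over $J_c$ --- and the same example shows that the literal relation ``compatible at every digit'' is not even transitive, so one must route the argument through the $\star_i$ explicitly (or work with the equivalence relation generated).

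For the reverse inclusion in (2), your reduction is incorrect. The case you skip --- both itineraries $\star$-free and hence identical as words --- is the substantive one: expansion of $z\mapsto z^d$ does \emph{not} force $x=y$, because the cylinder sets $C(I^\alpha(x)|_{[1,n]})$ shrink only in measure, not in diameter, and their intersection is the whole $\approx_\alpha$-class (this is precisely the content of the proposition from \cite{lin2018quasisymmetry} the paper quotes later). Whenever $|[\alpha]_{\approx_\alpha}|=2$, that class already contains two distinct points sharing one $\star$-free itinerary, and your case analysis never reaches them; proving $x\sim_\alpha y$ in this case is where the actual work of the cited references lies. Your bound of $d$ on class sizes is also wrong: already for $d=2$, dendrite Julia sets admit branch points of arbitrarily large finite valence.
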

Given a finite word $w\in\set{L_1,\dots, L_d}^n$, we can then associate a map $\widetilde{w}$ such that \begin{equation*}
    \widetilde{w} = \widetilde{w[1]} \circ\widetilde{w[2]} \circ \cdots \circ \widetilde{w[n]}.
\end{equation*} Since each $\widetilde{L_i}$ is well-defined on $\T$ except at $\alpha$, $\widetilde w$ is well-definded everywhere on $\T$ except possibly at $\set{\alpha^{di}}_{i=0}^{|w|}$.

\begin{figure}[ht]
    \centering
    \begin{tikzpicture}[scale =0.85]
        \draw [line width=1pt] (0,0) circle (4);
    
        \draw [line width=1.2pt, blue] ({-4*cos(360/7)},{-4*sin(360/7)}) node [red, below left] {$\star_2$} -- ({4*cos(360/7)},{4*sin(360/7)}) node [red, above right] {$\star_1$};
        
        \harcc{0}{0}{4}{180/7}{360*9/28+180}{line width=1.2pt, blue}
        \harcc{0}{0}{4}{360*9/28}{180/7+180}{line width=1.2pt, blue}

        \harcc{0}{0}{4}{720/7}{180*9/28+360}{line width=1.2pt, blue}
        \harcc{0}{0}{4}{720/7-180}{180*9/28+180}{line width=1.2pt, blue}
        \harcc{0}{0}{4}{90/7}{180*23/28+180}{line width=1.2pt, blue}
        \harcc{0}{0}{4}{90/7+180}{180*23/28+360}{line width=1.2pt, blue}
        
        \fill [red] ({4*cos(720/7)},{4*sin(720/7)}) circle (3pt) node [red,above right=0.6pt]{$\alpha$};

        \nodeat{0}{0}{1}{120}{$L_1L_1L_1$}
        \nodeat{0}{0}{3.4}{80}{\footnotesize $L_1L_1L_2$}
        \nodeat{0}{0}{2.7}{140}{\footnotesize $L_1L_2L_1$}
        \nodeat{0}{0}{2.7}{140}{\footnotesize $L_1L_2L_1$}
        \nodeat{0}{0}{3.3}{170}{\footnotesize $L_1L_2L_2$}
        \nodeat{0}{0}{1}{300}{$L_2L_1L_1$}
        \nodeat{0}{0}{3.4}{260}{\footnotesize $L_2L_1L_2$}
        \nodeat{0}{0}{2.7}{320}{\footnotesize $L_2L_2L_1$}
        \nodeat{0}{0}{2.7}{320}{\footnotesize $L_2L_2L_1$}
        \nodeat{0}{0}{3.3}{350}{\footnotesize $L_2L_2L_2$}

    \end{tikzpicture}
    \caption{The cylinder sets for $\alpha=e^{4\pi i/7}$ associated with words of length $3$ with $d=2$.}
    \label{fig:cylinder}
\end{figure}
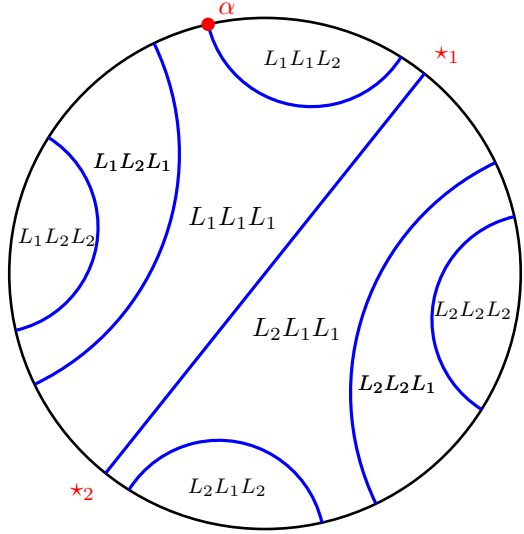

\begin{definition}[Cylinder Sets, \cite{lin2018quasisymmetry}]
    Given a finite word $w \in \set{L_1,\dots, L_d}^*$,  the \textit{cylinder set} $C(w)$ associated with $w$ is the closure of the image of $\widetilde w$ (see Figure~\ref{fig:cylinder}). Alternatively, if we define $C(L_i) = \overline{\star_i \frown \star_{i+1}}
$, then \begin{equation*}
    C(w) = \set{x\in\T: x^{d(i-1)}\in C(w[i])\quad \text{for all $i=1,\dots, n$}}.
\end{equation*}
\end{definition}
In consequence, we have that $x\in C(w)$ if $I^\alpha(x)[i] \in \set{w[i],\star}$ for all $i=1,\dots, |w|$.

\subsection{Gluing Links} In light of Lemma~\ref{equi_lami}, we would only consider aperiodic $\alpha$ such that $\sim_\alpha$ (defined dynamically) and $\approx_\alpha$ (defined with itineraries) are the same. It is clear that there are only countable periodic angles so this assumption is harmless. It is easy to see that the cylinder sets defined above are saturated closed set with respect to the quotient map defined by $\approx_\alpha$. More generally, they are \textit{gluing links}, whose definition is as follows:
\begin{definition}[Gluing links]
    A gluing link $D$ (with respect to $\approx_\alpha$) is a finite union of disjoint closed arcs $\sqcup_{i=1}^n \overline{a_1\frown b_1}$ with $a_1,b_1,\dots,a_n,b_n$ ordered counterclockwise such that $b_i \approx_\alpha a_{i+1}$ with $a_{n+1}=a_1$.
\end{definition}
In other words, a gluing link is a finite set of arcs whose end points are "linked" by $\approx_\alpha$.  With invariant properties of $\approx_\alpha$, one can easily verify the following properties of gluing links. To take pre-images more easily, we set $h(z) = z^d$ ("h" for "hopping").

\begin{proposition}[see Figure~\ref{fig:links}]\label{gluing_links}
    Given a gluing link $D\subseteq T$ with $n$ arcs, we have
    \begin{enumerate}
        \item The image $h(D)$ is either the full circle or a gluing link.
        \item If $\alpha\notin D$, then \begin{equation*}
            h^{-1}(D) = \bigsqcup_{i=1}^d \widetilde{L_i}(D),
        \end{equation*} where each $\widetilde{L_i}(D)$ is a gluing link with $n$ arcs and $|\widetilde{L_i}(D)| = |D|/d$.
        \item If $\alpha \in D$, then  \begin{equation*}
            h^{-1}(D) = \overline{\bigsqcup_{i=1}^d \widetilde{L_i}(D\backslash \set{\alpha})},
        \end{equation*} is a gluing link with $dn$ arcs and $|h^{-1}(D)| = |D|$.
    \end{enumerate}
\end{proposition}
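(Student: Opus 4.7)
The author bills this as easily verified from the invariance properties of $\approx_\alpha$, so the plan is to check each item by combining (a) the forward-invariance $x \approx_\alpha y \Rightarrow x^d \approx_\alpha y^d$ and (b) the preservation property $\widetilde{L_i}(x) \approx_\alpha \widetilde{L_i}(y)$ for $x \approx_\alpha y$ with $\alpha \notin \{x, y\}$, both recorded in Section~\ref{lamin}. Write $D = \bigsqcup_{j=1}^{n} \overline{a_j \frown b_j}$ with $b_j \approx_\alpha a_{j+1}$ (cyclic indices). For (1), each $h(\overline{a_j \frown b_j})$ is a closed connected subset of $\T$, so $h(D)$ is either all of $\T$ or a finite union of closed arcs; forward-invariance yields the required linking $h(b_j) \approx_\alpha h(a_{j+1})$ of boundary points in the latter case.

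For (2), since $\alpha \notin D$, each $\widetilde{L_i}$ is a homeomorphism on a neighborhood of $D$ with image in the open fundamental domain $\star_i \frown \star_{i+1}$. The images for distinct $i$ are pairwise disjoint (and avoid all the $\star_k$'s, since $\alpha \notin D$), so $\widetilde{L_i}(D)$ is a disjoint union of $n$ closed arcs in $\star_i \frown \star_{i+1}$. The preservation property transports the linking to $\widetilde{L_i}(b_j) \approx_\alpha \widetilde{L_i}(a_{j+1})$, making $\widetilde{L_i}(D)$ a gluing link of $n$ arcs. The identity $|\widetilde{L_i}(D)| = |D|/d$ is immediate from $h$ being a conformal degree-$d$ cover on each fundamental domain, and $h^{-1}(D) = \bigsqcup_i \widetilde{L_i}(D)$ follows from the definition of the inverse branches.

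Part (3) requires more care. When $\alpha \in D$, removing $\alpha$ splits (or opens) the unique arc $\overline{a_1 \frown b_1}$ of $D$ meeting $\alpha$ into two half-open pieces. By the orientation-preservation of $\widetilde{L_i}$, the half ending at $\alpha^-$ (just before $\alpha$ counterclockwise) has its image under $\widetilde{L_i}$ ending at $\star_{i+1}$, while the half starting at $\alpha^+$ has its image under $\widetilde{L_{i+1}}$ starting at $\star_{i+1}$. Upon taking the closure of $\bigsqcup_i \widetilde{L_i}(D \setminus \{\alpha\})$, these two sub-arcs concatenate at $\star_{i+1}$, and one such merging occurs at each of $\star_1, \dots, \star_d$. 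The two split halves thus contribute $2d$ sub-arcs (one per fundamental domain for each half) which merge pairwise at the $d$ points $\star_k$ into $d$ combined arcs, while each of the remaining $n-1$ arcs in $D$ contributes $d$ pairwise disjoint preimages. This gives $d + d(n-1) = dn$ arcs in total. The $\approx_\alpha$-linking of consecutive endpoints in $h^{-1}(D)$ follows from the preservation property applied to the original links in $D$; even the new ``merged'' endpoints $\widetilde{L_i}(a_1)$, $\widetilde{L_{i+1}}(b_1)$ link to images of $b_n, a_2$ (none of which equal $\alpha$), so the property still applies. Finally $|h^{-1}(D)| = |h^{-1}(D \setminus \{\alpha\})| = |D|$, since removing a single point does not affect arclength and the $d$ inverse branches each scale by $1/d$.

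The main obstacle is the orientation bookkeeping in (3): one has to track carefully which one-sided limit of which $\widetilde{L_i}$ lands at which $\star_k$, to confirm that the naive count of $(n+1)d$ sub-arcs collapses to exactly $dn$ upon closure. Once that is pinned down, the gluing-link property is essentially automatic from the preservation of $\approx_\alpha$.
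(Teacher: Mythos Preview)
Your proposal is correct and is exactly the verification the paper has in mind: the paper itself omits the proof entirely, stating only that ``with invariant properties of $\approx_\alpha$, one can easily verify the following properties of gluing links.'' Your write-up supplies precisely that verification, using forward-invariance for (1), the branch maps $\widetilde{L_i}$ and their $\approx_\alpha$-preservation for (2), and the orientation/merging analysis at the $\star_k$'s for (3); the arc count $d+d(n-1)=dn$ and the measure identities are handled correctly.
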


As a consequence of Lemma~\ref{gluing_links}, given a gluing link $D$ and integer $i\geq 0$, $h^{-i}(D)$ is a disjoint union of gluing links (Here we define $h^0$ as $\id$). We call each of them an $\alpha$-\textit{component} of $h^{-i}(D)$. If $x\in h^{-i}(D)$, we denote the $\alpha$-component containing $x$ by $\comp^\alpha_x h^{-i}(D)$. To justify the notation, we observe that under the quotient topology of $\T$ given by $\approx_\alpha$, \begin{equation*}
    \comp^\alpha_x h^{-i}(D) = \comp^{ \T/_{\approx_\alpha}}_x h^{-i}(D).
\end{equation*} That is, $\alpha$-components are just topological connected components of $h^{-i}(D)$.

\begin{figure}[ht]
    \centering
        \begin{tikzpicture}[scale=0.8]
        
            \fill[teal, opacity=0.4] \arch{9}{0}{3}{30}{40} -- \arcc{9}{0}{3}{40}{50} -- \arch{9}{0}{3}{50}{205} --\arcc{9}{0}{3}{205}{210} 
                
            -- \arch{9}{0}{3}{210}{220} -- \arcc{9}{0}{3}{220}{230} -- \arch{9}{0}{3}{230}{385} --\arcc{9}{0}{3}{385}{390} -- cycle;

            \fill[teal, opacity=0.4] \arch{0}{0}{3}{60}{80} -- \arcc{0}{0}{3}{80}{100} -- \arch{0}{0}{3}{100}{410} --\arcc{0}{0}{3}{410}{420}-- cycle;

            \fill[blue, opacity=0.4] \arcc{9}{0}{3}{-170}{-165} -- \arch{9}{0}{3}{-165}{140} -- \arcc{9}{0}{3}{140}{150} -- \arch{9}{0}{3}{150}{160} -- \arcc{9}{0}{3}{160}{170} -- \arch{9}{0}{3}{170}{190} -- cycle;

            \fill[blue, opacity=0.4] \arcc{9}{0}{3}{10}{15} -- \arch{9}{0}{3}{15}{320} -- \arcc{9}{0}{3}{320}{330} -- \arch{9}{0}{3}{330}{340} -- \arcc{9}{0}{3}{340}{350} -- \arch{9}{0}{3}{350}{370} -- cycle;

            \fill[blue, opacity=0.4] \arcc{0}{0}{3}{20}{30} -- \arch{0}{0}{3}{30}{280} -- \arcc{0}{0}{3}{280}{300} -- \arch{0}{0}{3}{300}{320} -- \arcc{0}{0}{3}{320}{340} -- \arch{0}{0}{3}{340}{380} -- cycle;
                
            \draw [line width=1pt] (0,0) circle (3);
            \draw [dashed, line width=1pt, red] ({3*cos(222.5)},{3*sin(222.5)}) node [red, below left] {$\star_2$} -- ({3*cos(42.5)},{3*sin(42.5)}) node [red, above right] {$\star_1$};
            \fill [red] ({3*cos(85)},{3*sin(85)}) circle (2pt) node [above=0.6pt] {$\alpha$};
        
            \draw [->,line width=3pt] (4,0) -- (5,0) node[pos=0.4,above=1pt] {\huge $h^{-1}$};
            
            \draw [line width=1pt] (9,0) circle (3);
            \draw [dashed, line width=1pt, red] ({9+3*cos(222.5)},{3*sin(222.5)}) node [red, below left] {$\star_2$} --  ({9+0.5*cos(222.5)},{0.5*sin(222.5)}) ;
            \draw [dashed, line width=1pt, red] ({9+0.5*cos(42.5)},{0.5*sin(42.5)})  --  ({9+3*cos(42.5)},{3*sin(42.5)}) node [red, above right] {$\star_1$};
            \fill [red] ({9+3*cos(85)},{3*sin(85)}) circle (2pt) node [above=0.6pt] {$\alpha$};

            \node at ({2.3*cos(80)},{2.3*sin(80)}) {$D_1$};
            \node at ({2*cos(-30)},{2*sin(-30)}) {$D_2$};
            \node at (9,0) {$h^{-1}(D_1)$};
            \node at ({9+2.3*cos(-53)},{2.3*sin(-53)}) {$\widetilde{L_2}(D_2)$};
            \node at ({9+2.3*cos(125)},{2.3*sin(125)}) {$\widetilde{L_1}(D_2)$};
        \end{tikzpicture}
        \caption{Picture for Proposition~\ref{gluing_links} where $d=2$. The gluing link $D_1$ contains $\alpha$ and thus its preimage has one component, whereas the gluing link $D_2$'s preimage has two separate gluing links as it does not contain $\alpha$.}
        \label{fig:links}
\end{figure}
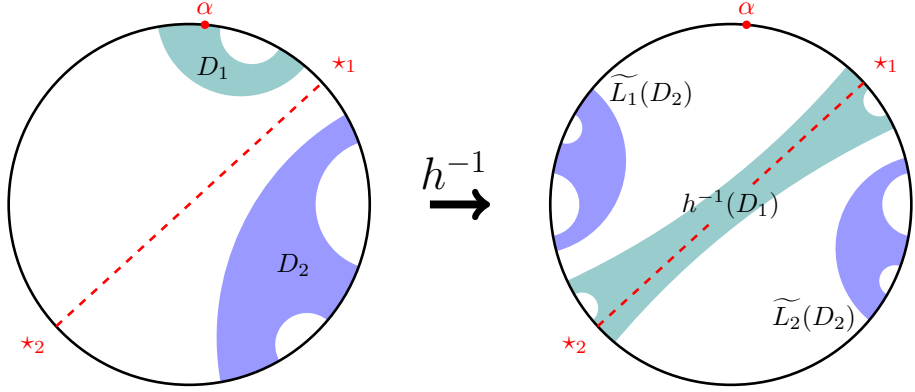

\subsection{Pulling back Gluing Links}\label{pulling_back}
In this section, we will introduce a specific way to construct families of gluing links using Proposition~\ref{gluing_links}. The proofs of the main theorems require us to extract the microscopic information of $\approx_\alpha$ at each point. In order to do that, we want to find gluing links of different scales, which we can do using the following construction: Suppose first that we have a covering of $\T$ by gluing links. We would like to denote this covering by $\set{D(x)}_{x\in\T}$, where for each point $x\in \T$, we assign a unique gluing link $D(x) \ni x$. 
\begin{lemma}\label{smaller}
    Given $x\in \T$, for all $n>0$, we have $x \in h^{-n}(D(h^n(x)))$. As a consequence, we can set \begin{equation*}
        D^{(n)}(x) := \comp_x^\alpha h^{-n}(D(h^n(x))),
    \end{equation*} and for each $n$, $\set{D^{(n)}(x)}_{x\in \T}$ forms a new covering of $\T$ by gluing links.
\end{lemma}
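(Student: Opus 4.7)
The plan is essentially bookkeeping from Proposition~\ref{gluing_links}; there is no real obstacle. First, the containment $x \in h^{-n}(D(h^n(x)))$ is immediate from the covering assumption: by definition $h^n(x) \in D(h^n(x))$, so applying $h^{-n}$ gives the claim.

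Second, I would verify by induction on $n$ that $h^{-n}(E)$ is a disjoint union of gluing links whenever $E$ is a single gluing link. The base case $n=0$ is trivial since $h^0 = \id$. For the inductive step, write $h^{-(n-1)}(E) = \bigsqcup_j E_j$ as a disjoint union of gluing links; then
\[
    h^{-n}(E) = \bigsqcup_j h^{-1}(E_j),
\]
and by Proposition~\ref{gluing_links}(2)--(3), each $h^{-1}(E_j)$ is itself a disjoint union of gluing links (either one link with $d\cdot(\#\text{arcs of }E_j)$ arcs when $\alpha \in E_j$, or $d$ disjoint links of the same arc count when $\alpha \notin E_j$). Disjointness between the $h^{-1}(E_j)$ for different $j$ is automatic since $h$ is a function. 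Applying this with $E = D(h^n(x))$, the $\alpha$-component $D^{(n)}(x) = \comp_x^\alpha h^{-n}(D(h^n(x)))$ is well defined and is itself a gluing link containing $x$.

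Finally, the covering property comes for free by instantiating the first statement at every $y \in \T$: since $y \in h^{-n}(D(h^n(y)))$, the $\alpha$-component $D^{(n)}(y)$ of this preimage is a gluing link containing $y$, so $\{D^{(n)}(y)\}_{y \in \T}$ assigns to each point a gluing link containing it. The only subtlety worth flagging -- that intermediate preimage components may contain $\alpha$ and thus have more arcs than the original link -- does not affect the conclusion, since the definition of a gluing link places no constraint on the number of arcs.
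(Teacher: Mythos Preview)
Your proof is correct and follows the same approach as the paper. The paper treats the lemma as essentially trivial, giving only the one-line observation that $h^n(x)\in D(h^n(x))$ by definition; the fact that $h^{-i}(D)$ is a disjoint union of gluing links is recorded right after Proposition~\ref{gluing_links} as an immediate consequence, which you spell out more carefully via induction.
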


This statement is rather trivial as $h^n(x) \in D(h^n(x))$ by definition. Nevertheless, we would like to list it as a lemma for its usefulness. Note that by Proposition~\ref{gluing_links}, $\set{D^{(n)}(x)}$ are generally much smaller than $\set{D(x)}$. Nevertheless, we do not necessarily have $D^{(n)}(x) \subseteq D(x)$. We can think of $D^{(n)}(x)$ as arising from the sequence \begin{align}\label{seq}
    D(h^n(x)) := D^{(0)}(h^n(x))\to D^{(1)}(h^{n-1}(x))  \to \cdots \nonumber \\\cdots\to D^{(n-1)}(h^1(x)) \to D^{(n)}(x),
\end{align} 
where at each step, we take the preimage of $D^{(i)}(h^{n-i}(x))$ and pick the $\alpha$-component that contains $h^{n-i-1}(x)$. A consequence of Proposition~\ref{gluing_links} is as follows:

\begin{corollary}
    Consider the sequence of gluing links in $\eqref{seq}$. For each $i\in [1,n]\cap \N$, we have \begin{equation*}
        h(D^{(n-i+1)}(h^{i-1}(x))) = D^{(n-i)}(h^{i}(x)).
    \end{equation*} Moreover, if $D^{(n-i)}(h^i(x))$ does not contain $\alpha$, then $D^{(n-i+1)}(h^{i-1}(x))$ is a rescaled image of $D^{(n-i)}(h^{i}(x))$ under some $\widetilde{L_j}$. If $D^{(n-i)}(h^i(x))$ does contain $\alpha$, then \begin{equation*}
        D^{(n-i+1)}(h^{i-1}(x)) = h^{-1}(D^{(n-i)}(h^i(x))).
    \end{equation*}
\end{corollary}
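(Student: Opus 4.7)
My plan is to prove the corollary directly by unpacking the definition $D^{(m)}(y) := \comp^\alpha_y h^{-m}(D(h^m(y)))$ and invoking Proposition~\ref{gluing_links} just once. Since $h^{n-i+1}(h^{i-1}(x)) = h^n(x) = h^{n-i}(h^i(x))$, both $D^{(n-i+1)}(h^{i-1}(x))$ and $D^{(n-i)}(h^i(x))$ are $\alpha$-components of preimages of the \emph{same} base gluing link $D(h^n(x))$, only at different orders. The strategy is to exhibit $D^{(n-i+1)}(h^{i-1}(x))$ as an $\alpha$-component of $h^{-1}\bigl(D^{(n-i)}(h^i(x))\bigr)$, and then read off its structure from Proposition~\ref{gluing_links}.

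To set this up I would write $h^{-(n-i+1)} = h^{-1} \circ h^{-(n-i)}$. Since preimages commute with disjoint unions, we obtain the decomposition
\[
    h^{-(n-i+1)}\bigl(D(h^n(x))\bigr) \;=\; \bigsqcup_{y} h^{-1}\!\bigl(D^{(n-i)}(y)\bigr),
\]
where $y$ runs over one representative of each $\alpha$-component of $h^{-(n-i)}(D(h^n(x)))$. Because $h(h^{i-1}(x)) = h^i(x) \in D^{(n-i)}(h^i(x))$, the point $h^{i-1}(x)$ lies in the summand corresponding to $y=h^i(x)$. Consequently $D^{(n-i+1)}(h^{i-1}(x))$, which is the $\alpha$-component of the full preimage containing $h^{i-1}(x)$, is in fact an $\alpha$-component of the single summand $h^{-1}\bigl(D^{(n-i)}(h^i(x))\bigr)$.

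At this point I would apply Proposition~\ref{gluing_links} to the gluing link $D^{(n-i)}(h^i(x))$ and split into two cases. If $\alpha \notin D^{(n-i)}(h^i(x))$, part~(2) gives the disjoint decomposition $h^{-1}\bigl(D^{(n-i)}(h^i(x))\bigr) = \bigsqcup_{j=1}^d \widetilde{L_j}\bigl(D^{(n-i)}(h^i(x))\bigr)$ into $d$ gluing links. The $\alpha$-component $D^{(n-i+1)}(h^{i-1}(x))$ must therefore equal the unique summand $\widetilde{L_j}\bigl(D^{(n-i)}(h^i(x))\bigr)$ that contains $h^{i-1}(x)$, giving the rescaled-image assertion. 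If $\alpha \in D^{(n-i)}(h^i(x))$, part~(3) says that $h^{-1}\bigl(D^{(n-i)}(h^i(x))\bigr)$ is itself a single gluing link, so it is its own $\alpha$-component and coincides with $D^{(n-i+1)}(h^{i-1}(x))$. In either case one verifies that $h$ maps $D^{(n-i+1)}(h^{i-1}(x))$ onto $D^{(n-i)}(h^i(x))$: in the first case because $h\circ \widetilde{L_j}$ is the identity on $D^{(n-i)}(h^i(x))$, and in the second because $h$ is surjective on $\T$. This gives the first assertion and finishes the proof.

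The main obstacle is essentially bookkeeping rather than genuine content: one has to check that taking $\alpha$-components commutes correctly with the iterated preimage, so that the one-step pullback picture of Proposition~\ref{gluing_links} faithfully describes each transition in the sequence~\eqref{seq}. Once this commutativity is in place, the corollary is simply a case-split on the two options in parts~(2) and~(3) of that proposition.
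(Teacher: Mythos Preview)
Your proposal is correct and follows exactly the route the paper intends: the paper states this corollary without proof, merely flagging it as ``a consequence of Proposition~\ref{gluing_links}'', and you have supplied precisely the bookkeeping that unpacks that consequence. Your key observation---that both $D^{(n-i+1)}(h^{i-1}(x))$ and $D^{(n-i)}(h^i(x))$ arise from preimages of the \emph{same} base link $D(h^n(x))$, so that the former is an $\alpha$-component of $h^{-1}$ of the latter---is the intended mechanism, and the case split on parts~(2) and~(3) of Proposition~\ref{gluing_links} is exactly what the paper has in mind.
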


For our purpose, we would like to think that there is a property $P$ for $\approx_\alpha$ preserved by maps $\widetilde{L_j}$. This is quite natural considering that we have $\widetilde{L_i}(x) \approx_\alpha \widetilde{L_i}(y)$ whenever $x\approx_\alpha y$. We suppose \begin{enumerate}
    \item if a gluing link $D$ with $|D| \asymp d^{-n}$ is "nice" for $P$ and $\alpha\notin D$, then $\widetilde{L_j}(D)$ is a gluing link with size $\asymp d^{-n-1}$ still nice for $P$. 
    \item If a gluing link $D$ with $|D| \asymp d^{-n}$ is nice for $P$ but $\alpha\in D$, then $h^{-1}(D)$ is a gluing link with size $\asymp d^{-n-1}$ that is "slightly worse" for $P$.
\end{enumerate}

Inspired by this, given $\set{D(x)}$, we set the \textit{encounter number} to be \begin{equation*}
    N(x,n) := \# \set{i\in [1,n]\cap \N~|~ \alpha \in D^{(n-i)}(h^i(x))}.
\end{equation*}

Intuitively, if all of $\set{D(x)}$ are nice for the property $P$ with size $\asymp 1$, then $D^{(n)}(x)$ is "acceptable" for $P$ with size about $d^{-n}$ as long as $N(x,n)$ is small.

\subsection{Structure of proofs}
In Section~\ref{CCE_sec}, we will describe the "nice gluing" property for a gluing link introduced by P. Lin and S. Rohde in \cite{lin2019conformal}. This "nice gluing" property is in fact preserved by $\widetilde{L_j}$ and thus we can use the strategy mentioned above. The CCE condition can be roughly described as \begin{enumerate}
    \item there exists a covering $\set{D(x)}$ of $\T$ by gluing links where each $D(x)$ is of comparable size and satisfies the "nice gluing property" at the macro level.
    \item For each $x\in \T$, the encounter number $N(x,n)$ is small for many $n\in \N$.
\end{enumerate}
A consequence of the second condition is that for each $x$, we will have many $D^{(n)}(x)$ of different scales that are acceptable in regards to the nice gluing property. This will imply that $\approx_\alpha$ admits a conformal welding by Theorem~1.1 of \cite{lin2019conformal}. We will provide the details and give the full proof of Theorem~\ref{main_2} in Secion~\ref{CCE_sec}.

To prove Theorem~\ref{main_theo}, we want to show that for a generic $\alpha$, we can find a covering $\set{D(x)}$ that satisfies CCE. This is rather challenging and we will dedicate Sections~\ref{SR_sec},\ref{GCS_sec} and \ref{theo_sec} to finding such a covering. In Section~\ref{SR_sec}, we will show that for a generic $\alpha$, $\approx_\alpha$ satisfies the \textit{semi-CCE} condition, which can be roughly described as \begin{quote}
    For any covering $\set{D(x)}$ of $\T$ by gluing links (with respect to $\approx_\alpha$), if each $D(x)$ satisfies the \textit{digit-fixing} condition, then for each $x\in\T$, $N(x,n)$ is small for many $n\in\N$.
\end{quote}

This digit-fixing condition (defined in Section~\ref{df}) is described using itinerary map $I^\alpha(x)$ so it is relatively trackable. With CCE and semi-CCE conditions, we are aimed to find a good candidate for $\set{D(x)}$ that has nice gluing property and satisfies the digit-fixing condition. In Section~\ref{GCS_sec}, we will discuss a special type of gluing links called \textit{generalized cylinder sets}. Like cylinder sets, the generalized cylinder sets are defined using itinerary map $I^\alpha(x)$. In consequence, we can explicitly check for which $\alpha$, the generalized cylinder sets satisfy the digit-fixing condition. To check when the generalized cylinder sets satisfy the nice gluing property, we will follow a similar strategy outlined in \cite{lin2019conformal} and use the symbolic information provided by itineraries to infer geometric nicety inside each generalized cylinder sets. In Section~\ref{theo_sec}, we will show that for a generic $\alpha$, we can find a covering of $\T$ by the generalized cylinder sets that satisfy the two properties, which implies that $\approx_\alpha$ admits a Hölder-continuous conformal welding.

\section{Combinatorial Collet-Eckmann (CCE) Condition} \label{CCE_sec}
We will give the definition of the combinatorial Collet-Eckmann condition and prove Theorem~\ref{main_2} in this section. 
\subsection{Nice gluing circuits}
\begin{figure}[ht]
    \centering
    \begin{tikzpicture}[scale=0.8]
        \draw [line width=1pt] (0,0) circle (4);
        \fill ({4*cos(120)},{4*sin(120)}) circle (3pt) node [above left=0.6pt]{$x$};
    
        \harc{0}{0}{4}{-33}{102}
        \harc{0}{0}{4}{-35}{106}
        \harc{0}{0}{4}{-37}{108}
        \harc{0}{0}{4}{-42}{112}
    
        \harc{0}{0}{4}{127}{163}
        \harc{0}{0}{4}{129}{161}
        \harc{0}{0}{4}{131}{158}
        \harc{0}{0}{4}{133}{157}
    
        \harc{0}{0}{4}{175}{307}
        \harc{0}{0}{4}{180}{305}
        \harc{0}{0}{4}{182}{303}
        
        \nodeat{0}{0}{4.4}{320}{$A_3$}
        \nodeat{0}{0}{4.4}{107}{$A'_1$}
        \nodeat{0}{0}{4.4}{130}{$A_1$}
        \nodeat{0}{0}{4.4}{160}{$A'_2$}
        \nodeat{0}{0}{4.4}{178}{$A_2$}
        \nodeat{0}{0}{4.4}{305}{$A'_3$}

        \nodeat{0}{0}{1.5}{60}{$\phi_3$}
        \nodeat{0}{0}{3.6}{145}{$\phi_1$}
        \nodeat{0}{0}{1.8}{250}{$\phi_2$}
    
        \gluepair{-45}{-30}{100}{114}{4}{0}{0}
        \gluepair{125}{134}{156}{165}{4}{0}{0}
        \gluepair{170}{184}{296}{310}{4}{0}{0}
    \end{tikzpicture}
    \caption{A nice gluing circuit around $x$}
    \label{fig:nice_gluing}
\end{figure}
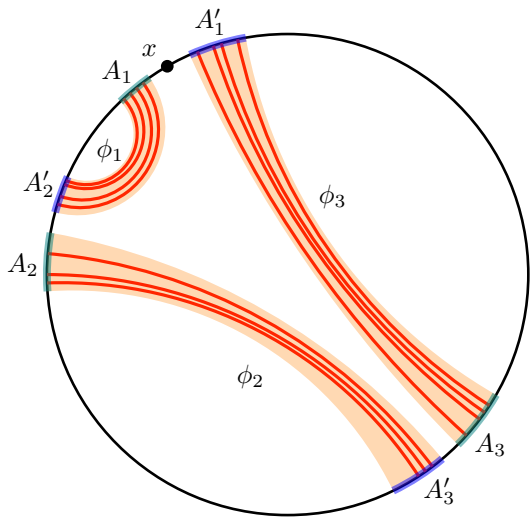

\begin{definition}[Nice gluing circuits \cite{lin2019conformal}]
    Given a lamination $\approx$ and a triple $(N,C,r)$, we define an $(N,C,r)$-\textit{nice gluing circuit} to be a set of disjoint closed arcs $A_1,A'_1,\dots, A_n,A'_n$ ordered counterclockwise in $\T$ with $A_{n+1}=A_1$,$A'_{n+1}=A'_1$ and $n\leq N$  such that \begin{enumerate}
        \item For each $j$, we have \begin{equation*}
            \diam (A_i\cup A'_i) \asymp_C r,\quad \diam A_i \asymp_C {\diam(A_i\cup A'_i)}\asymp_C \diam A'_i.
        \end{equation*}
        \item For each $j$, there exist (Borel) probability measures $\mu_j$ and $\mu'_j$ supported on $A_j$ and $A'_j$ respectively such that \begin{equation*}
            \max (\mathcal{E}(\hat \mu_j), \mathcal{E}(\hat \mu'_j)) \leq C,
        \end{equation*} where $\hat{\mu}_j$ the push-forward of $\mu_j$ under the map $x \mapsto x/\diam(A_j)$ (the same for $\hat{\mu}'_j$) and $\mathcal{E}(\mu)$ is the \textit{logarithmic energy} of $\mu$ defined as \begin{equation*}
            \mathcal{E}(\mu) = \int_{\text{supp}(\mu)^2} -\log|x-y|d\mu(x)d\mu(y).
        \end{equation*}
        We call $\mathcal{E}(\hat{\mu})$ the \textit{rescaled logarithmic energy} for $\mu$.
        
        \item For each $j$, there exists a $\mu_j$-full measure set $B_j\subseteq A_j$ and a $\mu'_{j+1}$-full measure set $B'_{j+1} \subseteq A'_{j+1}$ along with a $(\mu_j|_{B_j},\mu'_{j+1}|_{B'_{j+1}})$-measure preserving bijection $\phi_j: B_j \to B'_{j+1}$ such that \begin{equation*}
            y \approx \phi_j (y),\quad \text{for all $y\in B_j$}.
        \end{equation*}
    \end{enumerate}
\end{definition}

We say a nice gluing circuit is \textit{around} $x\in\T$ if $x$ is between some $A_i$ and $A'_i$ for some $i$ (see Figure \ref{fig:nice_gluing}). Intuitively, one can think of a nice gluing circuit as a series circuit in the context of electrical networks with the pairs $(A_j,A'_{j+1})$ acting as resistors in the circuit, and $A_j$ and $A_j'$ connected by zero-resistance paths. The parameter $N$ bounds the number of resistors in the circuit, whereas the parameter $C$ controls the resistance for each of them ($r$ describes the physical scale of the circuit). To prove Theorem~\ref{main_2}, we will use the following criterion for the existence of Hölder continuous conformal welding.

\begin{theorem}[{\cite[Theorem~1.1]{lin2018quasisymmetry}}]\label{peter_gluing}
    Given a lamination, if there are $N,C,r$ and $P > 1$ such that for every $x\in\T$, there exists an increasing sequence of integers $\set{n_j}$ with $n_j\leq Pj$ such that there exists an $(N,C,2^{-n_j}r)$-nice gluing circuit around $x$ for each $i$, then the lamination admits a Hölder continuous conformal welding.
\end{theorem}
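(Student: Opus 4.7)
The plan is to build the welding $\varphi:\hat\C\setminus\overline{\D}\to\Omega$ by a compactness argument, then convert each nice gluing circuit around a point $x\in\T$ into a topological annulus in $\hat\C\setminus\varphi(\T)$ whose conformal modulus is bounded below by a constant $m=m(N,C)>0$. The hypothesis $n_j\leq Pj$ then supplies, around every point of $\T$, arbitrarily many nested annuli at geometric scales, which by Teichmüller's module estimate forces the Hölder bound.

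\emph{Existence of the welding.} I would first truncate the lamination at scale $\epsilon>0$ by collapsing only those equivalence classes whose convex hulls have diameter at least $\epsilon$; this leaves finitely many nontrivial classes and admits a hydrodynamically normalized welding $\varphi_\epsilon$ built by pasting finitely many disks along identified arcs. The abundance of nice gluing circuits at every scale prevents degeneration of $\Omega$, so a subsequential limit as $\epsilon\to 0$ exists locally uniformly on $\hat\C\setminus\overline\D$, extends continuously to $\T$, and has fiber equivalence exactly equal to $\approx$.

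\emph{Modulus estimate.} For an $(N,C,\rho)$-nice gluing circuit $\{A_i,A'_i\}_{i=1}^n$ around $x$, the welded image of the circuit is a Jordan curve $\gamma\subseteq\overline\Omega$ bounding an annulus $R\subseteq\hat\C\setminus\varphi(\T)$ that separates $\varphi(x)$ from $\infty$. To bound $\mathrm{mod}(R)$ from below, I would use that $\mathcal{E}(\hat\mu_i),\mathcal{E}(\hat\mu'_i)\leq C$, combined with the standard equivalence of logarithmic energy and capacity, implies $\mathrm{cap}(A_i)\asymp_C\diam A_i$ and similarly for $A'_i$. Together with the $(\mu_i|_{B_i},\mu'_{i+1}|_{B'_{i+1}})$-preserving, $\approx$-respecting bijection $\phi_i$, this provides a bounded-flux construction on $R$: pushing the two measures forward through the welded pair $(A_i,A'_{i+1})$ yields an admissible test function for the extremal distance between the ``inside'' and ``outside'' of $\gamma$ across that pair. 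Summing the $n\leq N$ contributions via the Grötzsch inequality (subadditivity of extremal length for resistors in series) gives the uniform lower bound $\mathrm{mod}(R)\geq m(N,C)$.

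\emph{Hölder decay.} Applying the previous step at scales $\rho_j=2^{-n_j}r$ yields a nested sequence of annuli $R_1\supset R_2\supset\cdots$ around $\varphi(x)$ with $\mathrm{mod}(R_j)\geq m$, so Teichmüller's module estimate gives $\diam\varphi(I_j)\leq C_0\lambda^j$ for some $\lambda=\lambda(m)<1$, where $I_j\subseteq\T$ is the arc enclosed by the $j$-th circuit (of length $\asymp 2^{-n_j}r$). Since $2^{-n_j}r\geq 2^{-Pj}r$, this gives $\diam\varphi(I_j)\lesssim |I_j|^\alpha$ with $\alpha=\log(1/\lambda)/(P\log 2)$, and a standard two-scale interpolation between arbitrary pairs of points upgrades this to Hölder continuity of $\varphi|_{\T}$ everywhere. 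The main difficulty will lie in the modulus estimate: translating the abstract energy-and-bijection data on $\T$ into a genuine conformal modulus in the a priori unknown domain $\Omega$ forces one to carry out Step 1 in tandem with capacity estimates, so that logarithmic capacity and conformal modulus pass to the limit --- essentially a weak-continuity of welding with respect to the circuit's measures.
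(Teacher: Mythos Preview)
The paper does not prove this theorem; it is quoted from \cite{lin2018quasisymmetry} as a black box and used without argument (the only remark following the statement is that the base $2$ can be changed by adjusting $P$). So there is no proof in this paper to compare your proposal against.

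That said, your outline matches the broad architecture of the Lin--Rohde argument: approximate weldings, a uniform modulus lower bound extracted from each circuit, and then H\"older decay via nested annuli and the $n_j\leq Pj$ growth condition. Two points in your sketch would need tightening if you were to write it out. First, the order of Steps 1 and 2 is inverted: the modulus estimate is what drives the compactness and non-degeneration of the approximate weldings $\varphi_\epsilon$, so you cannot first produce $\varphi$ and then verify the modulus bound in the limit domain $\Omega$; the estimate must be established uniformly along the approximating sequence. Second, the passage from the circuit data $(A_i,A'_i,\mu_i,\mu'_i,\phi_i)$ to an honest annulus of modulus $\geq m(N,C)$ in the image is where essentially all the work lies. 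Your description (``pushing the two measures forward \ldots\ yields an admissible test function'') is more a slogan than a construction: one has to build a metric or a test function in the plane, not just on $\T$, and the role of the $\approx$-respecting bijections $\phi_i$ is precisely to let the flux cross the identified boundary arcs without loss. This is the step that genuinely uses the logarithmic-energy bound (via Beurling-type estimates relating energy to capacity), and it is not a one-line consequence of Gr\"otzsch.
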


The removability of Hölder trees then implies that the resultant dendrite of the conformal welding for $\approx_\alpha$ has to be a Julia set (One can refer Chapter 5 of \cite{lin2018quasisymmetry} for details). Note that the base $2$ in the statement can be change to any real number $d>2$ as we can replace $P$ with $\lceil \log d/\log 2\rceil P$.

\subsection{Definition of CCE Condition}

The TCE condition is a topological condition on polynomials implying that the corresponding Julia set is a Hölder tree. 
\begin{definition}[\cite{przytycki1996porosity, smirnov2000symbolic}]
    Given $c\in\C$ and consider the polynomial $p_c = z^d+c$ with corresponding Julia set $J_c$, we say $p_c$ satisfies the \textit{topological Collet-Eckmann (TCE) condition} if there exist $(r,P,M)$ such that  for any $x\in J_c$, there exists an increasing sequence of integers $\set{n_j}$ with $n_j\leq Pj$ and \begin{equation*}
            \#\set{0\leq i < n_j:c \in \comp_{p_c^{n_j-i}(x)}p_c^{-i}(B_{p_c^{n_j}(x)}(r))} \leq M.
        \end{equation*}
\end{definition}

The definition of CCE condition is inspired by the TCE condition, where gluing links play a similar role to euclidean balls intersecting the Julia set $J_c$ and the map $h$ playing the role of the polynomial $p_c$.

\begin{definition}\label{CCE}
    Given $\alpha\in\T$ and consider the lamination $\approx_\alpha$, we say $\alpha$ satisfies the \textit{combinatorial Collet-Eckmann (CCE) condition} if there exists a family of gluing links $\set{D(y)}_{y\in\T}$ with $y\in D(y)$ that satisfies the following two conditions:
    \begin{enumerate}
        \item \label{CCE1} there exist $(N,C,r)$ such that for any $x\in\T$, there exists an $(N,C,r)$-nice gluing circuit around $x$ contained in $D(x)$;
        \item \label{CCE2}there exist $(P,M)$ such that for any $x\in\T$, there exists an increasing sequence of integers $\set{n_j}$ with $n_j\leq Pj$ and \begin{equation*}
            N(x,n_j) = \#\set{0\leq i < n_j:\alpha \in \comp^\alpha_{h^{n_j-i}(x)}h^{-i}(D(h^{n_j}(x)))} \leq M.
        \end{equation*}
    \end{enumerate} 
\end{definition}

We will call the first condition of $\set{D(x)}$ the \textit{nice gluing} condition.

\subsection{Proof of Theorem~\ref{main_2}}
Following the strategy in Section~\ref{pulling_back}, we need to show that the nice gluing condition is preserved under $\widetilde{L_i}$ and will only be slightly worse when $\alpha \in D^{(n-i)}(h^i(x))$ and we need to take the preimage. Again we will only consider the case where $d=2$. Therefore we have $h(z) = z^2$. The argument should work for $d\geq 3$ with slight modification.

\begin{lemma}~\label{copying}
    Suppose that a collection $\set{D(x)}_{x\in\T}$ of gluing links satisfies Condition~\ref{CCE1} of Definition~\ref{CCE} with parameters $(N,C,r)$. Given any integer $M$, there exist $N_1,C_1$ depending only on $(N,C,M,d)$ such that for any $x\in\T$, if $N(x,n)\leq M$, then we can find an $(N_1, C_1, 2^{-n}r)$-nice gluing circuit in $D^{(n)}(x)$ around $x$.
\end{lemma}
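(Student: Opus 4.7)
The plan is to iteratively pull back the $(N, C, r)$-nice gluing circuit guaranteed by Condition~\ref{CCE1} of Definition~\ref{CCE} around $h^n(x)$ inside $D(h^n(x))$ through the sequence
\begin{equation*}
D(h^n(x)) \to D^{(1)}(h^{n-1}(x)) \to \cdots \to D^{(n)}(x),
\end{equation*}
maintaining inductively, at each step $i$, a nice gluing circuit around $h^{n-i}(x)$ contained in $D^{(i)}(h^{n-i}(x))$ at scale $\asymp 2^{-i} r$, with controlled parameters $(N_i, C_i)$. I would classify each pullback step as \emph{good} ($\alpha \notin D^{(i)}(h^{n-i}(x))$) or \emph{bad} ($\alpha \in D^{(i)}(h^{n-i}(x))$) and analyze them separately, with only bad steps allowed to increase the parameters.

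At a good step, Proposition~\ref{gluing_links} gives $D^{(i+1)}(h^{n-i-1}(x)) = \widetilde{L_k}(D^{(i)}(h^{n-i}(x)))$ for the unique $k$ with $\widetilde{L_k}(h^{n-i}(x)) = h^{n-i-1}(x)$, and I would take the next circuit to be the pointwise $\widetilde{L_k}$-image of the current one, with measures pushed forward and gluings conjugated. Because $\widetilde{L_k}$ is an exact similarity of ratio $1/2$ in angular coordinates and has Euclidean Jacobian distortion $1 + O(\ell)$ on arcs of angular length $\ell$, the diameter ratios, mass/length comparisons, and rescaled logarithmic energies of the nice gluing definition are each preserved with relative error $O(2^{-i} r)$, while the gluing condition is preserved verbatim since $\widetilde{L_k}$ respects $\approx_\alpha$. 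Summing $\sum_i O(2^{-i} r)$ shows the cumulative drift in $C$ over all good steps is bounded by a universal constant, and the arc count is unchanged.

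At a bad step, the new gluing link is the entire $h^{-1}(D^{(i)}(h^{n-i}(x)))$, and $\alpha$ lies somewhere in the current circuit or its gaps. When $\alpha$ is in a gap, one can still use the $\widetilde{L_k}$-image of the current circuit and land in $\widetilde{L_k}(D^{(i)}(h^{n-i}(x))) \subseteq D^{(i+1)}(h^{n-i-1}(x))$, with parameters essentially unchanged. When $\alpha$ is interior to one of the circuit arcs, that arc must be cut at $\alpha$ before taking the image, both $\widetilde{L_1}$ and $\widetilde{L_2}$ branches may be needed, and the identification $\star_1 \approx_\alpha \star_2$ (inherited from $\alpha \approx_\alpha \alpha$ by invariance of itineraries) is used to stitch the two halves into a single cyclically ordered circuit. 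Each bad step therefore increases the arc count by at most a bounded multiplicative factor and degrades the $C$-constant by at most a multiplicative factor depending only on $d$.

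Iterating, after $n$ steps with at most $M$ bad ones I obtain $N_1 \leq N \cdot 2^M$ and $C_1 = K(d)^M \cdot C \cdot O(1)$, both depending only on $(N, C, M, d)$ as required. The main obstacle I anticipate is the combinatorial bookkeeping in the bad step: one must verify that after cutting the current circuit at $\alpha$ (if necessary) and concatenating its $\widetilde{L_k}$-images via the link $\star_1 \approx_\alpha \star_2$, the resulting family of arcs is genuinely disjoint and counterclockwise around $h^{n-i-1}(x)$ in the correct $(A_j, A'_j)$ pairing, and that the measure-preserving bijection of condition~(3) of the nice gluing circuit definition extends across the stitching points without destroying the uniform bound on the rescaled logarithmic energy.
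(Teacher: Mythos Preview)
Your overall approach matches the paper's. Two minor corrections and one genuine gap.

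The distortion tracking at good steps is unnecessary: in the angular coordinate each $\widetilde{L_k}$ is an exact affine $1/2$-contraction, so diameter ratios and rescaled logarithmic energies are preserved \emph{exactly} at every good step---there is no $O(2^{-i}r)$ error to accumulate. Also, your bad-step ``$\alpha$ in a gap'' case is too optimistic. There are two kinds of gap: the one between $A_j$ and $A'_{j+1}$ (crossed by $\phi_j$) and the one between $A_j$ and $A'_j$ (the ``around'' gap). If $\alpha$ sits in the first kind, your single $\widetilde{L_k}$-image works. If $\alpha$ sits in the second kind, one branch gives only a chain, not a circuit around $h^{n-i-1}(x)$; you must take both $\widetilde L$ and $\widetilde R$ images and stitch them via $\star_1\approx_\alpha\star_2$, doubling $N$ but leaving $C$ unchanged.

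The real gap is the one you flagged but did not close: when $\alpha$ lands inside an arc $A_j$, cutting at $\alpha$ splits $\mu_j$ into two pieces, and renormalising the small piece can blow up the energy. The paper's resolution is a measure dichotomy. Write $A_j=C_1\sqcup D_1$ with $\alpha$ between them (and let $C_1$ be the piece on the $A'_{j+1}$-side); then either $\mu_j(C_1)\ge 1/2$ or $\mu_j(D_1)\ge 1/2$. Restrict $\mu_j$ to the heavy half and renormalise: the rescaled logarithmic energy grows by at most a factor $4$. If the heavy half is $C_1$, a single branch still closes the circuit (parameters $(N,4C,r/2)$); if it is $D_1$, take both branches and stitch as above (parameters $(2N,4C,r/2)$). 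Hence each bad step costs at most a factor $2$ in $N$ and $4$ in $C$, giving the explicit final bound $(2^M N,\,4^M C,\,2^{-n}r)$.
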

\begin{proof}
    We set $L := L_1$ and $R:= L_2$. Starting with a nice gluing circuit around $h^n(x)$ in $D(h^n(x))$, we will iterate the following algorithm to get a gluing circuit around $x$. 

    For $i\in [n]$, suppose we have an $(N,C,r)$-nice gluing circuit in $D^{(i)}(h^{n-i}(x))$ around $h^{n-i}(x)$. We have the following three scenarios:\begin{enumerate}
        \item~\label{first} When $\alpha$ is outside $D^{(i)}(h^{n-i}(x))$ or is between some $A_j$ and $A'_{j+1}$:
        
        In this case, suppose $h^{n-i-1}(x) = \widetilde L (h^{n-i}(x))$. Note that since both rescaled logarithmic energy and $\approx_\alpha$ are preserved under $\widetilde{L}$, $\widetilde L$ maps the $(N,C,r)$-nice gluing circuit around $h^{n-i}(x)$ to an $(N,C,r/2)$-nice gluing circuit around $h^{n-i-1}(x)$ in $D^{(i+1)}(h^{n-i-1}x) = \widetilde{L}(D^{(i)}(h^{n-i}x))$. The case for $\widetilde R$ is similar. Thus we get an $(N,C,r/2)$-nice gluing circuit around $h^{n-i-1}(x)$ in $D^{(i+1)}(h^{n-i-1}(x))$.

        \begin{figure}[ht]
            \centering
            \begin{tikzpicture}[scale=0.8]
                \draw [line width=1pt] (0,0) circle (3);
                \draw [dashed, line width=2pt, red] ({3*cos(222.5)},{3*sin(222.5)}) node [red, below left] {$\star_2$} -- ({3*cos(42.5)},{3*sin(42.5)}) node [red, above right] {$\star_1$};
                \fill [red] ({3*cos(85)},{3*sin(85)}) circle (2pt) node [above=0.6pt] {$\alpha$};
                \fill [teal] ({3*cos(168)},{3*sin(168)}) circle (2pt);
                \draw [->,teal, line width=1.2pt] ({2.6*cos(168)+0.3},{2.6*sin(168)}) node [right=0.6pt]{$h^{n-i}(x)$} --({2.8*cos(168)},{2.8*sin(168)});
                \fill [teal] ({3*cos(264)},{3*sin(264)}) circle (2pt) node [above =0.5pt] {$h^{n-i-1}(x)$};
            
                \draw [->,line width=3pt] (4,0) -- (5,0) node[pos=0.4,above=1pt] {\Large $\widetilde R$};
            
                \draw [line width=1pt] (9,0) circle (3);
                \draw [dashed, line width=2pt, red] ({9+3*cos(222.5)},{3*sin(222.5)}) node [red, below left] {$\star_2$} -- ({9+3*cos(42.5)},{3*sin(42.5)}) node [red, above right] {$\star_1$};
                \fill [red] ({9+3*cos(85)},{3*sin(85)}) circle (2pt) node [above=0.6pt] {$\alpha$};
                \fill [teal] ({9+3*cos(168)},{3*sin(168)}) circle (2pt);
                \fill [teal] ({9+3*cos(264)},{3*sin(264)}) circle (2pt);
            
                \gluepair{70}{81}{90}{99}{3}{0}{0}
                \gluepair{105}{115}{156}{165}{3}{0}{0}
                \gluepair{170}{184}{416}{426}{3}{0}{0}
                
                \gluepair{35}{40.5}{225}{229.5}{3}{9}{0}
                \gluepair{232.5}{237.5}{258}{262.5}{3}{9}{0}
                \gluepair{265}{272}{388}{393}{3}{9}{0}
            
                \nodeat{0}{0}{3.4}{75}{$A_3$}
                \nodeat{0}{0}{3.4}{95}{$A'_1$}
                \nodeat{0}{0}{3.4}{110}{$A_1$}
                \nodeat{0}{0}{3.4}{160}{$A'_2$}
                \nodeat{0}{0}{3.4}{178}{$A_2$}
                \nodeat{0}{0}{3.4}{60}{$A'_3$}
            \end{tikzpicture}
            \caption{Case~\ref{first} where $h^{n-i-1}(x)=\widetilde R (h^{n-i}(x))$}
            \label{fig:first}
        \end{figure}
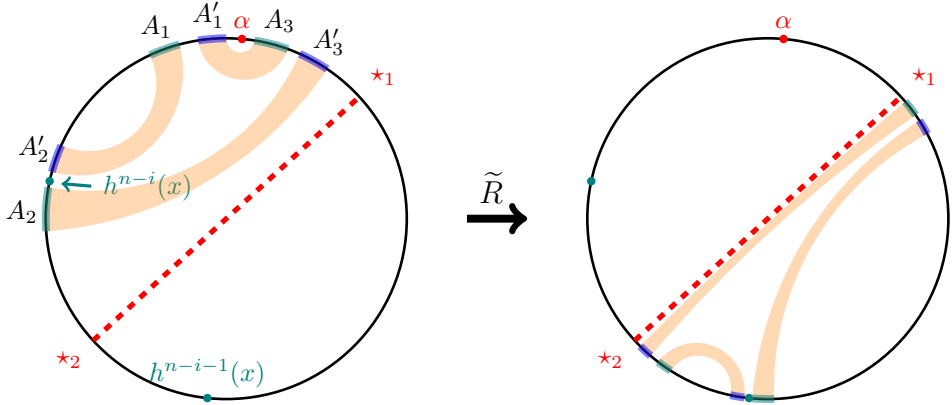

        \item~\label{second}  When $\alpha$ is in $D^{(i)}(x)$ and between some $A_j$ and $A'_j$:
        
        W.l.o.g., we assume $j=1$ and denote this arc between $A_1$ and $A'_1$ by $\Gamma_\alpha$. Then $h^{-1}(\Gamma_\alpha)$ consists of two arcs $J_1,J_2$ of length $|\Gamma_\alpha|/2$ contained in $D^{(i+1)}(h^{n-i-1}(x))$ covering $\star_1,\star_2$ respectively. Moreover, the gluing link $D^{(i+1)}(h^{n-i-1}(x))$ also contains all $\widetilde L(A_j)$, $\widetilde L(A'_j)$, $\widetilde R(A_j)$ and $\widetilde R (A'_j)$. Note that $\widetilde L(B_j)$ and $\widetilde L(B'_{j+1})$ are glued by $\widetilde L\circ \phi_i\circ h$ and same for $\widetilde R(B_j)$ and $\widetilde R(B'_{j+1})$. We have the corresponding gluing measures which are push-forwards of the original measures by $\widetilde L$ and $\widetilde R$. Going counterclockwise from $\widetilde L(A'_1)$, $\widetilde L(A_1)$ to $\widetilde R(A'_1)$, $\widetilde R(A_1)$, we denote the arcs as $C'_1,C_2, C'_2,\dots$, $C'_{2n}, C_1$ and the push-forward measures by $\nu'_1, \nu_2,\dots ,\nu'_{2n},\nu_1$ respectively. Since $\widetilde L$ and $\widetilde R$ are linear, the push forward measures have the same rescaled logarithmic energy as before and we have a measure preserving map gluing each $C_j$ with $C'_{j+1}$. Moreover, apart from $(C_1,C'_1)$ and $(C_n,C'_n)$ which are connected by $J_1$ and $J_2$ respectively, all other $C_j \sqcup C'_j$ are rescaled images of some $A_j\sqcup A'_j$ by $\widetilde L$ or $\widetilde R$. Thus we have $\diam(C_j\cup C'_j) \in [r/(2C),rC/2]$ for all $j$. This implies that $\set{C_j}\cup\set{C'_j}$ form a $(2N, C,r/2)$-nice gluing circuit around $h^{n-i-1}(x)$ in $D^{(i+1)}(x)$.

        \begin{figure}[ht]
            \centering
            \begin{tikzpicture}[scale=0.8]
                \draw [line width=1pt] (0,0) circle (3);
                \draw [dashed, line width=2pt, red] ({3*cos(222.5)},{3*sin(222.5)}) node [red, below left] {$\star_2$} -- ({3*cos(42.5)},{3*sin(42.5)}) node [red, above right] {$\star_1$};
                \fill [red] ({3*cos(85)},{3*sin(85)}) circle (2pt) node [above=0.6pt] {$\alpha$};
                \fill [teal] ({3*cos(168)},{3*sin(168)}) circle (2pt);
                \draw [->,teal, line width=1.2pt] ({2.6*cos(168)+0.3},{2.6*sin(168)}) node [right=0.6pt]{$h^{n-i}(x)$} --({2.8*cos(168)},{2.8*sin(168)});
                \fill [teal] ({3*cos(264)},{3*sin(264)}) circle (2pt) node [below =0.5pt] {$h^{n-i-1}(x)$};
                \fill [teal] ({9+3*cos(264)},{3*sin(264)}) circle (2pt) node [below =0.5pt] {$h^{n-i-1}(x)$};
            
                \draw [->,line width=3pt] (4,0) -- (5,0) node[pos=0.4,above=2pt] {\Large $\widetilde L~\&~\widetilde R$};
            
                \draw [line width=1pt] (9,0) circle (3);
                \draw [dashed, line width=2pt, red] ({9+3*cos(222.5)},{3*sin(222.5)}) node [red, below left] {$\star_2$} -- ({9+3*cos(42.5)},{3*sin(42.5)}) node [red, above right] {$\star_1$};
                \fill [red] ({9+3*cos(85)},{3*sin(85)}) circle (2pt) node [above=0.6pt] {$\alpha$};
                \fill [teal] ({9+3*cos(168)},{3*sin(168)}) circle (2pt);
                \fill [teal] ({9+3*cos(264)},{3*sin(264)}) circle (2pt);
                \draw [red, line width=5pt, opacity=0.5] ({3*cos(82)},{3*sin(82)}) arc(82:90:3);
                \draw [red, line width=5pt, opacity=0.5] ({9+3*cos(41)},{3*sin(41)}) arc(41:45:3);
                \draw [red, line width=5pt, opacity=0.5] ({9+3*cos(221)},{3*sin(221)}) arc(221:225:3);
            
                \gluepair{90}{100}{111}{118}{3}{0}{0}
                \gluepair{125}{134}{156}{165}{3}{0}{0}
                \gluepair{170}{184}{430}{442}{3}{0}{0}
                
                \gluepair{45}{50}{55.5}{59}{3}{9}{0}
                \gluepair{225}{230}{235.5}{239}{3}{9}{0}
                \gluepair{62.5}{67}{78}{82.5}{3}{9}{0}
                \gluepair{242.5}{247}{258}{262.5}{3}{9}{0}
                \gluepair{85}{92}{215}{221}{3}{9}{0}
                \gluepair{265}{272}{395}{401}{3}{9}{0}
            
                \nodeat{0}{0}{3.4}{95}{$A_1$}
                \nodeat{0}{0}{3.4}{115}{$A'_2$}
                \nodeat{0}{0}{3.4}{130}{$A_2$}
                \nodeat{0}{0}{3.4}{160}{$A'_3$}
                \nodeat{0}{0}{3.4}{178}{$A_3$}
                \nodeat{0}{0}{3.4}{74}{$A'_1$}
            \end{tikzpicture}
            \caption{Case~\ref{second} where $h^{n-i-1}(x)=\widetilde R (h^{n-i}(x))$, arcs $\Gamma_\alpha$, $J_1$ and $J_2$ are highlighted in red.}
            \label{fig:second}
        \end{figure}
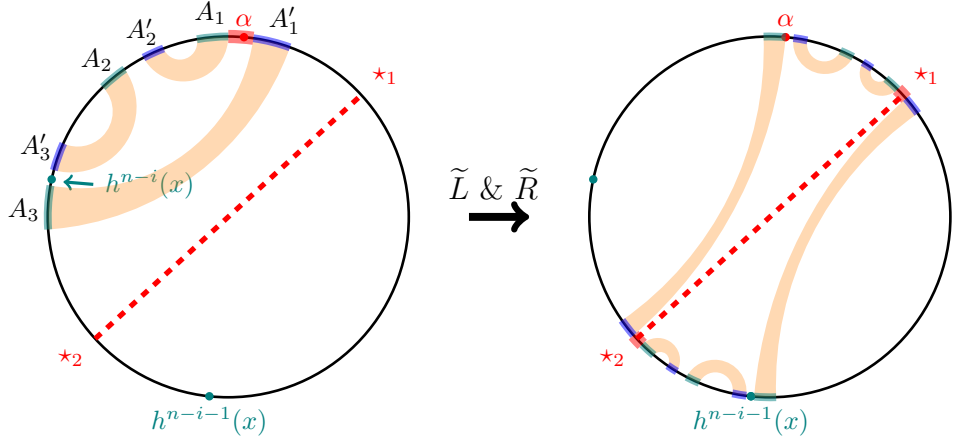
        
        \item \label{third} When $\alpha$ is in some $A_j$ or $A'_j$:
        
        We might assume the former case as the argument works for the latter with little modification. Thus we can also assume that $\alpha$ is in $A_1$.
        We cut $A_1$ at $\alpha$ into two sets $C_1,D_1$ such that $\alpha$ is between $C_1$ and $A'_2$ and also between $D_1, A'_1$. Since $\phi_1$ respects the lamination, it has to be monotone. We extend it arbitrarily to be a monotone function from $A_1$ to $A'_2$ and take $\alpha' = \phi_1(\alpha)$. We can then cut $A'_2$ at $\alpha'$ into two sets $C'_2$ and $D'_2$ such that $\phi_1(C_1)\subseteq C'_2$ and $\phi_1(D_1)\subseteq D'_2$ (see Figure~\ref{fig:cutting}).

        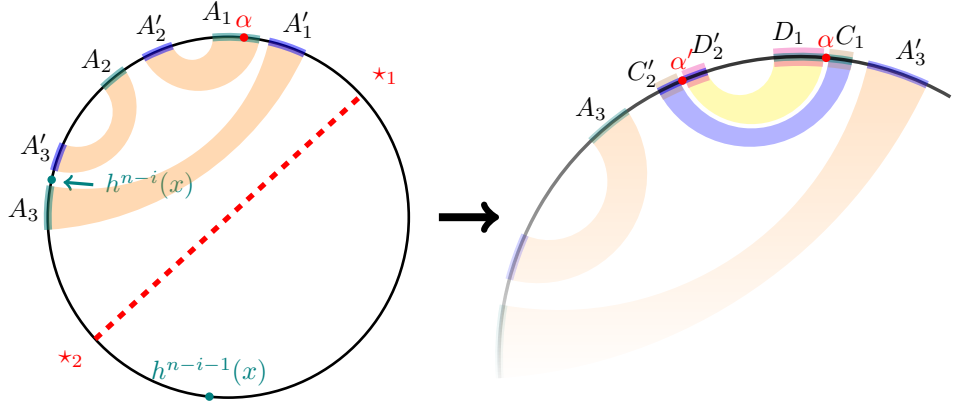
\begin{figure}[ht]
            \centering
            \begin{tikzpicture}[scale=0.8]
                \draw [line width=1pt] (0,0) circle (3);
                \draw [dashed, line width=2pt, red] ({3*cos(222.5)},{3*sin(222.5)}) node [red, below left] {$\star_2$} -- ({3*cos(42.5)},{3*sin(42.5)}) node [red, above right] {$\star_1$};
                \fill [teal] ({3*cos(168)},{3*sin(168)}) circle (2pt);
                \draw [->,teal, line width=1.2pt] ({2.6*cos(168)+0.3},{2.6*sin(168)}) node [right=0.6pt]{$h^{n-i}(x)$} --({2.8*cos(168)},{2.8*sin(168)});
                \fill [teal] ({3*cos(264)},{3*sin(264)}) circle (2pt) node [above =0.5pt] {$h^{n-i-1}(x)$};
                
                \draw [->,line width=3pt] (3.5,0) -- (4.5,0);
            
                \gluepair{80}{95}{108}{118}{3}{0}{0}
                \gluepair{125}{134}{156}{165}{3}{0}{0}
                \gluepair{170}{184}{425}{437}{3}{0}{0}

                \gluepairc{80}{84.5}{113.5}{118}{5}{9.5}{2-2.5*sqrt(3)}{brown}{blue}
                \gluepairc{85.5}{95}{108}{112.5}{5}{9.5}{2-2.5*sqrt(3)}{magenta}{yellow}
            
                \draw [teal, line width=3pt, opacity=0.5] ({9.5+5*cos(80)},{2-2.5*sqrt(3)+5*sin(80)}) arc(80:95:5);
            
                \draw [blue, line width=3pt, opacity=0.5] ({9.5+5*cos(108)},{2-2.5*sqrt(3)+5*sin(108)}) arc(108:118:5);
            
                \begin{scope}
                    \path [scope fading=south] (6,-3) rectangle (12,4);
                    \draw [line width=1.3pt] (12,2) arc(60:184:5);
                    \gluepair{125}{134}{156}{165}{5}{9.5}{2-2.5*sqrt(3)}
                    \gluepair{170}{184}{425}{437}{5}{9.5}{2-2.5*sqrt(3)}
                \end{scope}
            
                \nodeat{0}{0}{3.4}{93}{$A_1$}
                \nodeat{0}{0}{3.4}{112}{$A'_2$}
                \nodeat{0}{0}{3.4}{130}{$A_2$}
                \nodeat{0}{0}{3.4}{160}{$A'_3$}
                \nodeat{0}{0}{3.4}{178}{$A_3$}
                \nodeat{0}{0}{3.4}{70}{$A'_1$}
            
                \nodeat{9.5}{2-2.5*sqrt(3)}{5.4}{81}{$C_1$}
                \nodeat{9.5}{2-2.5*sqrt(3)}{5.4}{92}{$D_1$}
                \nodeat{9.5}{2-2.5*sqrt(3)}{5.4}{106.5}{$D'_2$}
                \nodeat{9.5}{2-2.5*sqrt(3)}{5.4}{119}{$C'_2$}
                \nodeat{9.5}{2-2.5*sqrt(3)}{5.4}{130}{$A_3$}
                \nodeat{9.5}{2-2.5*sqrt(3)}{5.4}{70}{$A'_3$}

                \fill [red] ({9.5+5*cos(85)},{2-2.5*sqrt(3)+5*sin(85)}) circle (2pt) node [above=0.6pt] {$\alpha$};
                \fill [red] ({9.5+5*cos(113)},{2-2.5*sqrt(3)+5*sin(113)}) circle (2pt) node [above=1pt] {$\alpha'$};
                \fill [red] ({3*cos(85)},{3*sin(85)}) circle (2pt) node [above=0.6pt] {$\alpha$};
                
            \end{tikzpicture}
            \caption{For Case~\ref{third}, we cut $A_1$ into two sets along $\alpha$ and cut $A'_2$ in the compatible way.}
            \label{fig:cutting}
        \end{figure}

        We let $G_1$ and $G_2$ be the two components of $h^{-1}(A_1)$ containing $\star_1$ and $\star_2$ respectively. Then going counterclockwise, the preimage of $(\cup A_i) \cup (\cup A'_i)$ under $h$ consists of $2n$ disjoint closed arcs (see Figure~\ref{fig:flowing}) \begin{equation*}
            G_1, \widetilde L(A'_2), \widetilde L(A_2),\dots, \widetilde L(A'_1), G_2, \widetilde R(A'_2), \widetilde R(A_2),\dots , \widetilde R(A'_1).
        \end{equation*} We also have that \begin{align*}
            \diam G_1 = \diam &G_2 = \frac{1}{2}\diam A_1,\\
            \diam(G_1\cup \widetilde R(A'_1)) =  \diam(G_2\cup &\widetilde L(A'_1)) = \frac{1}{2}\diam (A_1\cup A'_1).
        \end{align*}

               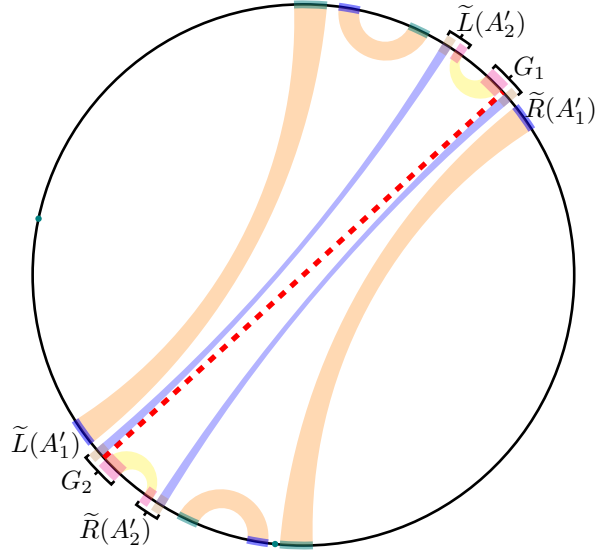
\begin{figure}[ht]
            \centering
            \begin{tikzpicture}[scale=0.6]
                \draw [line width=1pt] (0,0) circle (6);
                \draw [dashed, line width=2pt, red] ({6*cos(222.5)},{6*sin(222.5)})  -- ({6*cos(42.5)},{6*sin(42.5)}) ;

                \fill [teal] ({6*cos(168)},{6*sin(168)}) circle (2pt);
                \fill [teal] ({6*cos(264)},{6*sin(264)}) circle (2pt);
                
                \gluepairc{57}{59}{220}{222}{6}{0}{0}{brown}{blue}
                \gluepairc{237}{239}{400}{402}{6}{0}{0}{brown}{blue}
                \gluepairc{43}{47.5}{54}{56}{6}{0}{0}{magenta}{yellow}
                \gluepairc{223}{227.5}{234}{236}{6}{0}{0}{magenta}{yellow}
                \gluepair{62.5}{67}{78}{82.5}{6}{0}{0}
                \gluepair{242.5}{247}{258}{262.5}{6}{0}{0}
                \gluepair{85}{92}{212.5}{218.5}{6}{0}{0}
                \gluepair{265}{272}{392.5}{398.5}{6}{0}{0}

                \curlybrace{40}{47.5}{6.3}
                \curlybrace{220}{227.5}{6.3}
                \curlybrace{54}{59}{6.3}
                \curlybrace{234}{239}{6.3}
                
                \nodeat{0}{0}{6.8}{42.5}{$G_1$}
                \nodeat{0}{0}{6.8}{222.5}{$G_2$}

                \nodeat{0}{0}{6.8}{213}{$\widetilde L (A'_1)$}
                \nodeat{0}{0}{6.8}{33}{$\widetilde R(A'_1)$}
                
                \nodeat{0}{0}{7}{53.5}{$\widetilde L (A'_2)$}
                \nodeat{0}{0}{7}{233.5}{$\widetilde R(A'_2)$}
                
            \end{tikzpicture}
            \caption{The preimages of $A_i, A'_i, C_1, D_1, C'_2, D'_2$. If $\mu_1(C_1)$ is large (blue tunnels are more "conductive"), we choose the gluing circuit in one semi-circle similar to Case~\ref{first}. If $\mu_1(D_1)$ is large (yellow tunnels are more "conductive"), we glue two gluing circuits together like Case~\ref{second}.}
            \label{fig:flowing}
        \end{figure}
        W.l.o.g we assume that $h^{n-i-1}(x) \in C(L)$. Then we have the following two scenarios:\begin{enumerate}
            \item $\mu_1(C_1)\geq 1/2$ and thus $\mu'_2(C'_2)\geq 1/2$:\\
            We set $m_1 = \mu_1|_{C_1}/\mu_1(C_1)$ and $m'_2 = \mu_2|_{C'_2}/\mu'_2(C'_2)$. Then $m_1$ as a measure on $A_1$ (and $m_2'$ on $A'_2$) has rescaled logarithmic energy bounded by $4C$. We let $\nu_1$ and $\nu_2$ be the push-forwards of $m_1$ and $m'_2$ under $\widetilde L$. Additionally, $\widetilde L\circ \phi_1 \circ h: \widetilde L(C_1)\to \widetilde L(C'_2)$ will be $(\nu_1,\nu_2)$ measure-preserving. It follows that \begin{equation*}
                (\widetilde L(A'_1), G_2), (\widetilde L(A'_2), \widetilde L(A_2)), \dots, (\widetilde L(A'_n), \widetilde L(A_n))
            \end{equation*}
            form a $(N,4C,r/2)$-nice gluing circuit around $h^{n-i-1}(x)$.
            
            \item $\mu_1(D_1) > 1/2$ and thus $\mu'_2(D'_2) > 1/2$:\\
            We set $m_1 = \mu_1|_{D_1}/\mu_1(D_1)$ and $m'_2 = \mu_2|_{D'_2}/\mu'_2(D'_2)$. Then $m_1$ as a measure on $A_1$ (and $m_2'$ on $A'_2$) has rescaled logarithmic energy bounded by $4C$. We let $\nu^L_1$ and $\nu^L_2$ be the pushforward of $m_1$ and $m'_2$ under $\widetilde L$. Then $\nu^L_1$ is supported on $G_1$ and $\nu^L_2$ on $\widetilde L(A'_2)$. We similarly define $\nu^R_1$ and $\nu^R_2$ supported on $G_2$ and $\widetilde R(A'_2)$ respectively. Note that all of the four have rescaled logarithmic energy bounded by $4C$ and $\widetilde L\circ \phi_1 \circ h: \widetilde L(D_1)\to \widetilde L(D'_2)$ is $(\nu^L_1,\nu^L_2)$ measure-preserving and $\widetilde R\circ \phi_1 \circ h:\widetilde R(D_1)\to \widetilde R(D'_2)$ is $(\nu^R_1,\nu^R_2)$ measure-preserving. It follows that \begin{align*}
                (\widetilde R(A'_1), G_2), (\widetilde L(A'_2), \widetilde L(A_2)), \dots, (\widetilde L(A'_n), \widetilde L(A_n)),\\
                (\widetilde L(A'_1), G_2), (\widetilde R(A'_2), \widetilde R(A_2)), \dots, (\widetilde R(A'_n), \widetilde R(A_n)),\\
            \end{align*}
            form a $(2N,4C,r/2)$-nice gluing circuit around $h^{n-i-1}(x)$.
        \end{enumerate}
    \end{enumerate}

    In summary, each iteration, we will divide $r$ by $2$. If $\alpha$ is outside $D^{(i)}(x)$, we keep $N,C$ unchanged. Otherwise, at worst we need to increase $N$ to $2N$ and increase $C$ to $4C$. Since $N(x,n)<M$, $\alpha$ is in at most $M$ of $D^{(i)}(h^{n-i}(x))$'s for $k=1,\dots, n$. Thus we will end up with a $(2^M N, 4^M C, 2^{-n}r)$-nice gluing circuit around $x$.
\end{proof}

\begin{proof}[Proof of Theorem~\ref{main_2}]
    Let $\set{D(x)}$ be a covering of $\T$ by gluing links satisfies the two conditions as in Definition~\ref{CCE}. That is, there exist $(N,C,r)$ such that each $D(x)$ contains an $(N,C,r)$-nice gluing circuit around $x$. Moreover, there exist $(P,M)$ such that for each $x\in \T$, we have a sequence $\set{n_j}$ with $n_j\leq Pj$ such that $N(x,n_j) \leq M$. By Lemma~\ref{copying}, there exist $(N_1,C_1)$ such that there exists $(N_1,C_1, 2^{-n_j})$-nice gluing circuit around $x$ contained in $D^{(n_j)}(x)$ for each $n_j$. The result then follows from Theorem~\ref{peter_gluing}.
\end{proof}

\section{Strong Recurrence and Digit-Fixing Set} \label{SR_sec}
In this section, we will discuss how to guarantee low encounter number $N(x,n)$ for a covering $\set{D(x)}_{x\in \T}$. We will define the \textit{digit-fixing} condition inspired. To define this condition, we will first introduce the \textit{strong recurrence} condition.

\subsection{Strong Recurrence}
    \begin{definition}[Duplicating intervals and digits]
    Let $\alpha$ be fixed. Given any finite or infinite word $g$, we define the following.
    \begin{enumerate}
        \item  Fix a set $\mathcal{R}\subseteq \N$. We say an interval $[a,b]$ of digits of $g$ is $\mathcal{R}$-\textit{duplicating} if $g|_{[a,b]}$ coincides with $I^\alpha(\alpha)|_{[1,b-a+1]}$ except possibly in the digits of the former contained in $\mathcal{R}\cap [a,b]$. We also call a $\emptyset$-duplicating interval a \textit{(proper) duplicating interval}.
        \item Given an integer $D$ and a set $\mathcal{R}\subseteq \N$. We say the $i$-th digit of $g$ is $(D,\mathcal{R})$\textit{-duplicating} if $i$ is covered by some $\mathcal{R}$-duplicating interval $I$ such that $I$ either covers the last letter of $g$ or has length exceeding $D$.
    \end{enumerate}
\end{definition}

 Here we introduce the notion of strong recurrence by S. Smirnov in  \cite{smirnov2000symbolic}. We say a set $\mathcal{R} \subseteq \N$ is $D$-\textit{rare} if $\#(\mathcal{R} \cap [i,i+D]) \leq 3$ for any $i\in \N$. 

\begin{definition}[Strong recurrence \cite{smirnov2000symbolic}] 
    Given an angle $\alpha\in \T$ and its itinerary $I^\alpha(\alpha)$, we say $\alpha$ is \textit{strongly recurrent (SR)} if for every $D > 0$ and $\tau<1$, there exist integer $n>D$ and a $D$-rare set $\mathcal{R}_n \subseteq [n]$ such that \begin{equation*}
    \#\set{0 < i \leq  n: \text{$i$-th digit of $I^\alpha(\alpha)|_{[1,n]}$ is~$(D,\mathcal{R}_n)$-duplicating}} > \tau n.
    \end{equation*} 
\end{definition}

Intuitively, if $\alpha$ is strongly recurrent, then the word $I^\alpha(\alpha)$ frequently repeats its starting pattern. As a result, $I^\alpha(\alpha)$ should "contain less information" than a generic infinite word in $\set{L_1,\dots,L_d}^\infty$. We have the following result regarding the rarity of strongly recurrent angles:

\begin{proposition}[\cite{smirnov2000symbolic}]\label{SR}
    The set of SR angles has Hausdorff dimension zero.
\end{proposition}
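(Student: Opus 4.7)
The plan is to realize the SR set as a $\limsup$ of sets of angles with ``low-complexity'' itinerary prefixes at scale $d^{-n}$, and deduce the dimension bound via a covering argument at every Hausdorff scale $s > 0$.

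\textbf{Step 1 (Counting low-complexity itineraries).} Fix $D > 0$ and $\tau \in (0, 1)$. Let $\mathcal{W}_n(D, \tau) \subseteq \{L_1, \ldots, L_d\}^n$ denote the set of length-$n$ words $w$ for which some $D$-rare $\mathcal{R}_n \subseteq [n]$ makes more than $\tau n$ positions $(D, \mathcal{R}_n)$-duplicating with respect to $w$. The key estimate to establish is
\[
\#\mathcal{W}_n(D, \tau) \leq d^{[(1 - \tau) + 3/D + \varepsilon(D)] n}
\]
with $\varepsilon(D) \to 0$ as $D \to \infty$. Each such $w$ is specified by the triple consisting of the rare set $\mathcal{R}_n$ (contributing at most $\binom{n}{3n/D}$ choices), the endpoints of the maximal $\mathcal{R}_n$-duplicating intervals (at most $\binom{n}{2n/D}$ configurations), and the letters at positions outside those intervals together with the letters at positions in $\mathcal{R}_n$ (at most $d^{(1 - \tau)n + 3n/D}$ choices). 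The remaining letters are then forced by the self-referential identity $w[a + j - 1] = w[j]$ on each duplicating interval $[a, b]$, filled in by induction on $b - a + 1$; the first two items contribute $d^{\varepsilon(D) n}$ with $\varepsilon(D) \to 0$.

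\textbf{Step 2 (Parameter cylinders and the Hausdorff estimate).} For $w \in \{L_1, \ldots, L_d\}^n$, set $\Xi(w) := \{\alpha \in \T : I^\alpha(\alpha)|_{[1, n]} = w\}$. Since $I^\alpha(\alpha)[k] = L_i$ iff $h^{k-1}(\alpha) = \alpha^{d^{k-1}}$ lies in the arc $\star_i(\alpha) \frown \star_{i+1}(\alpha)$, the locus in parameter space where the $k$-th digit changes consists of the solutions of $\alpha^{d^k - 1} = 1$, i.e., the $(d^k - 1)$-th roots of unity. Collecting these for $k = 1, \ldots, n$ refines $\T$ into at most $\sum_{k=1}^{n}(d^k - 1) = O(d^n)$ arcs, each of diameter $O(d^{-n})$ and carrying a fixed word. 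Given $s > 0$, pick $D, \tau$ so that $(1 - \tau) + 3/D + \varepsilon(D) < s$. Writing $E_n(D, \tau) := \bigcup_{w \in \mathcal{W}_n(D, \tau)} \Xi(w)$, one has $\mathrm{SR} \subseteq \bigcup_{n > D} E_n(D, \tau)$, so for $\delta = Cd^{-N}$,
\[
\mathcal{H}^s_\delta(\mathrm{SR}) \leq \sum_{n \geq N} \#\mathcal{W}_n(D, \tau) \cdot (Cd^{-n})^s \cdot \mathrm{poly}(n) \lesssim \sum_{n \geq N} d^{[(1 - \tau) + 3/D + \varepsilon(D) - s] n + o(n)} \longrightarrow 0
\]
as $N \to \infty$. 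Since this holds for every $s > 0$, the Hausdorff dimension of $\mathrm{SR}$ is zero.

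\textbf{Main obstacle.} The delicate ingredient is the geometric control in Step 2: both the uniform $O(d^{-n})$ diameter bound for each atom of the level-$n$ partition and the polynomial bound on the number of connected components per word (absorbed into the $\mathrm{poly}(n)$ factor). Unlike the dynamical cylinders $C(w)$ from Section~\ref{lamin}, which partition $\T$ into arcs of length $2\pi d^{-n}$ for a \emph{fixed} $\alpha$, in parameter space the partition $\{\star_i\}$ itself rotates with $\alpha$, so $\Xi(w)$ is typically disconnected. The arrangement-of-curves argument sketched in Step 2 is essentially the combinatorial-wake (para-puzzle) geometry of the Mandelbrot set and is the main technical hurdle in making the proof rigorous.
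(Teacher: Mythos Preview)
The paper does not prove this proposition; it simply cites \cite{smirnov2000symbolic} and remarks that the constants in Smirnov's argument adapt to the modified rare-set definition. Your Step~1 is essentially the combinatorial core of Smirnov's proof and is correct, modulo the implicit convention that duplicating intervals have left endpoint $a\geq 2$ (without it $[1,n]$ is trivially duplicating and the SR condition becomes vacuous); with $a\geq 2$ the self-referential filling-in by induction on the position works exactly as you describe.

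The gap is precisely where you place it. In Step~2 you need each parameter cylinder $\Xi(w)$ to have at most polynomially many components, but you do not establish this, and it does not follow from the mere arrangement of $(d^k-1)$-th roots of unity: the total number of level-$n$ arcs is $O(d^n)$, so a priori a single word could occupy exponentially many of them, which would kill the estimate $\sum_n M_n\,d^{-ns}$. (The bound you want is in fact true---it comes from monotonicity properties of the kneading map---but proving it is comparable in difficulty to the alternative below.)

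Smirnov's actual route avoids this entirely. Rather than covering directly in $\T$, one works in the symbolic space $\{L_1,\ldots,L_d\}^\infty$ with its natural ultrametric and transfers the dimension bound via the kneading map $\varphi:\alpha\mapsto I^\alpha(\alpha)$. The key input is that $\varphi$ does not increase Hausdorff dimension; this is Proposition~1 of \cite{smirnov2000symbolic}, and the present paper itself invokes it later, in the proof that weakly pre-periodic angles form a null set. With that in hand, your Step~1 already shows that the bad sequences in symbolic space have dimension at most $(1-\tau)+3/D+\varepsilon(D)$, hence the SR sequences have dimension zero in symbolic space, and pulling back by $\varphi$ gives dimension zero in $\T$. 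This completely sidesteps the para-puzzle geometry you flagged as the main obstacle.
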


Note that in Smirnov's original paper, a $D$-rare set is defined as a set $\mathcal{R}\subseteq \N$ with the property that $\#(\mathcal{R}\cap [i,i+D])\leq 2$ for any $i$. For our modified definition, one can check that the proof for this Proposition still works by essentially changing some constants.

\subsection{Digit-Fixing Property}\label{df}
Recall that given a covering $\set{D(x)}_{x\in\T}$ of $\T$ by gluing links and a integer $n>1$, we can construct a new covering by (usually smaller) gluing links by setting \begin{equation*}
    D^{(n)}(x) = \comp_x^\alpha h^{-n}(D(h^n(x))).
\end{equation*}

\begin{definition}[digit-fixing cover]
    We say the covering $\set{D(x)}_{x\in\T}$ of gluing links is $L$-\textit{Digit-Fixing}, if the following condition holds:\begin{quote}
        For each $n$, if both $x_1,x_2$ are points in $D^{(n)}(y)$ for some $ y \in\T$, then the words $I^\alpha(x_1)|_{[1,n+1]}$ and $I^\alpha(x_2)|_{[1,n+1]}$ differ only on an increasing sequence of digits $\set{n_i}$ (depending on the choice of $x_1$ and $x_2$) such that $n_{i+1}-n_i > L$.
    \end{quote}
\end{definition} 

Intuitively speaking, if the covering $\set{D(x)}_{x\in\T}$ is digit-fixing, then each $D^{(n)}(x)$ only contains points with almost the same itinerary up to $(n+1)$-th digit. This definition is inspired by the following observation.

\begin{lemma}[{\cite[Lemma~2.1]{smirnov2000symbolic}}]\label{smir}
    Let $J\subseteq \C$ be a dendrite Julia set for polynomial $p_c(z):= z^d+c$. Then for any integer $L$, there exists $r>0$ such that for any $r$-radius ball $B_r$ that intersects $J$, if points $x, y \in \T$ are such that $\varphi_c(x)$ and $\varphi_c(y)$ belong to the same component of $p_c^{-n}(B_r)$, then $I^\alpha(x)|_{[1,n+1]}$ and $I^\alpha(y)|_{[1,n+1]}$ differ only on an increasing sequence of digits $\set{n_i}$ such that $n_{i+1}-n_i > L$.
\end{lemma}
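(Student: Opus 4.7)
The plan is to transfer the itinerary-level statement into the dynamical plane via $\varphi_c$ and exploit the fact that the external rays at the angles $\star_1,\dots,\star_d$ all land at the critical point $0$. Let $U$ be the component of $p_c^{-n}(B_r)$ containing $\varphi_c(x)$ and $\varphi_c(y)$, and set $V_i := p_c^i(U)$ for $0\leq i\leq n$, so $V_n = B_r$ and each $V_i$ is a connected set containing $p_c^i(\varphi_c(x))$ and $p_c^i(\varphi_c(y))$. Since the sectors $\star_j \frown \star_{j+1}$ of $\T$ correspond under $\varphi_c$ to the components of $\C$ minus the union of the external rays at angles $\star_1,\dots,\star_d$, the $i$-th letters of $I^\alpha(x)$ and $I^\alpha(y)$ strictly differ iff $V_{i-1}$ crosses one of these rays. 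I will call such an $i$ \emph{bad}.

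\textbf{From a bad digit to proximity to the critical point.} Because all $V_i$ sit in a shrinking neighborhood of $J$ as $r\to 0$, and each external ray accumulates on $J$ only at $0$, a connected subset of a sufficiently small tubular neighborhood of $J$ that crosses such a ray must contain a point close to $0$. Hence if $i$ is bad, there exists $z_i \in V_{i-1}$ with $|z_i|$ small, and then by continuity $p_c^k(z_i) \in V_{i+k-1}$ is close to $p_c^k(0) = p_c^{k-1}(c)$ for each $k = 1,\dots,L$, with a modulus of continuity that is uniform over the finite orbit segment $\set{p_c^{k-1}(c)}_{k=1}^L$.

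\textbf{Spacing via a diameter contradiction.} Suppose digits $i$ and $i+k$ with $1\leq k \leq L$ are both bad. Then $V_{i+k-1}$ contains a point close to $p_c^{k-1}(c)$ (shadowed forward from time $i$) together with a point close to $0$ (by badness at time $i+k$). Provided $p_c^{k-1}(c)\neq 0$, this forces $\diam V_{i+k-1}$ to be at least a definite fraction of $|p_c^{k-1}(c)|$. I will then choose $r$ so small that every component of every pullback $p_c^{-m}(B_r)$ has diameter strictly below $\tfrac12\min\set{|p_c^{k-1}(c)| : 1\leq k\leq L,\; p_c^{k-1}(c)\neq 0}$, giving the contradiction; this uniform smallness of pullback components is deduced from the properness of $p_c$ together with all such components lying in an arbitrarily small neighborhood of the compact set $J$. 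The Misiurewicz subcase $p_c^{k-1}(c) = 0$ for some $k\leq L$ is handled separately: then $c$ is pre-periodic, so the eventual periodicity of the post-critical orbit lets me propagate the shadowing argument along a full period, again producing a geometric contradiction to the smallness of $V_{i+k-1}$.

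The main technical obstacle I anticipate is the uniform diameter control of pullback components in the spacing step, since iterated preimages of $p_c$ can expand distances in a non-uniform way near $0$. Resolving this cleanly will likely involve either a Koebe-type distortion estimate along pullback branches that stay away from $0$, or a direct compactness argument leveraging that the only obstruction to uniformity comes from the finite post-critical set, which is precisely the obstruction the conclusion is designed to circumvent. The Misiurewicz case may additionally require an inductive appeal to the combinatorial structure of the Hubbard tree of $p_c$.
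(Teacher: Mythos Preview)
The paper does not prove this lemma; it is quoted from Smirnov's paper and only used to motivate the combinatorial analogue, Proposition~\ref{digit_fixing}. Your outline is essentially Smirnov's argument, and it is also the argument that the paper transcribes on the circle in the proof of Proposition~\ref{digit_fixing}: a bad digit at time $i$ forces the pullback component $V_{i-1}$ to contain (a point near) the critical point, so a second bad digit at time $i+k$ forces $V_{i+k-1}$ to contain points near both $0$ and $p_c^k(0)$, contradicting smallness of $V_{i+k-1}$. So your approach is the intended one.

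Two remarks that would tighten your write-up. First, the Misiurewicz subcase you flag is vacuous: if $J_c$ is a dendrite then the filled Julia set has empty interior, so $p_c$ has no superattracting cycle, hence the critical point $0$ is not periodic and $p_c^{k-1}(c)=p_c^k(0)\neq 0$ for every $k\geq 1$. You may simply set $\delta=\min_{1\leq k\leq L}|p_c^k(0)|>0$ and be done. Second, you can avoid the ``close to $0$'' approximation entirely: the component $V_{i-1}$ is open and simply connected, and the rays $R_{\star_1},\dots,R_{\star_d}$ together with their common landing point $0$ form a closed set separating $\varphi_c(h^{i-1}(x))$ from $\varphi_c(h^{i-1}(y))$; since $V_{i-1}$ lies in an arbitrarily thin neighborhood of $J$ and the rays meet $J$ only at $0$, connectedness of $V_{i-1}$ forces $0\in V_{i-1}$ on the nose once $r$ is small. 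Then $p_c^k(0)\in V_{i+k-1}$ exactly, and the contradiction is immediate from the diameter bound.

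The diameter bound itself (every component of every $p_c^{-m}(B_r)$ has diameter $<\delta/2$ once $r$ is small, uniformly in $m$ and in the center of $B_r$) is the standard shrinking lemma for locally connected polynomial Julia sets; it does not require Koebe distortion or a separate treatment near $0$, so you can cite it directly rather than list it as an anticipated obstacle.
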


In a way, the lemma implies that small Euclidean balls intersecting the Julia set are digit-fixing. Again, in our setting, the gluing links play the role of Euclidean balls and the map $h$ plays the role of the polynomial $p_c$.

\subsection{Criteria for Strong Recurrence and Digit-Fixing}
Essentially only using Lemma~\ref{smir}, S. Smirnov proves that the special angle of a non-TCE polynomial satisfies SR condition. Therefore, we can extract from the result a more general criterion for SR. To do so, we first introduce the definition of semi-CCE condition:

\begin{definition}\label{semi-CCE}
    Given $\alpha\in\T$ and $h:z\to z^d$, we says $\alpha$ satisfies \textit{semi-CCE condition} if there exist $(L,P,M)$ such that the following is satisfied:
    \begin{quote}
        If $\{D(y)\}_{y\in \T}$ is a $L$-digit-fixing covering of gluing links, then for any $x\in\T$ there exists an increasing sequence of integers $\set{n_j}$ with $n_j\leq Pj$ and $N(x,n_j) \leq M$.
    \end{quote}
\end{definition}

In other words, semi-CCE condition guarantees that any digit-fixing cover of gluing links will give low encounter numbers (recall we need this for Condition~2 of Definition~\ref{CCE}). The relation between semi-CCE and SR is as follows.

\begin{theorem}[\cite{smirnov2000symbolic}]\label{smirn}
    If $\alpha$ is not pre-periodic and does not satisfy semi-CCE condition, it is strongly recurrent. 
\end{theorem}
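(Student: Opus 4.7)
The plan is to port Smirnov's argument in \cite{smirnov2000symbolic}---that failure of TCE at the critical value forces strong recurrence of the critical orbit---to our purely combinatorial setting. In Smirnov's proof the digit-fixing property is the output of Lemma~\ref{smir} applied to Euclidean balls intersecting a dendrite Julia set; in our formulation it is hypothesized directly on the gluing-link covering via the definition of semi-CCE, so the symbolic half of Smirnov's argument transfers with $h$ in place of $p_c$ and gluing links in place of Julia-intersecting balls. I argue contrapositively: assuming $\alpha$ is not pre-periodic and semi-CCE fails, the task is to produce, for each target pair $(D, \tau)$ in the definition of strong recurrence, an $n > D$ and a $D$-rare $\mathcal{R}_n \subseteq [n]$ for which more than $\tau n$ digits of $I^\alpha(\alpha)|_{[1,n]}$ are $(D, \mathcal{R}_n)$-duplicating.

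Given such $(D, \tau)$, I would choose $L$ much larger than $D$, then $M$ large relative to $1/(1-\tau)$ and to $D$, and finally $P$ large. Failure of semi-CCE for $(L, P, M)$ supplies an $L$-digit-fixing covering $\{D(y)\}_{y\in\T}$ and a point $x \in \T$ for which no increasing sequence $\{n_j\}$ satisfies both $n_j \leq Pj$ and $N(x, n_j) \leq M$; a routine counting argument converts this into the assertion that $\mathcal{B} := \{n : N(x, n) > M\}$ has lower density at least $1 - 1/P$ in $\N$. For $n \in \mathcal{B}$, listing the encounter indices as $i_1 < \cdots < i_k$ with $k > M$, the $L$-digit-fixing hypothesis at each $i_l$ supplies an $L$-rare set $F_l$ with $I^\alpha(x)[i_l + s] = I^\alpha(\alpha)[s]$ for $s \in [1, n - i_l + 1] \setminus F_l$. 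Eliminating $I^\alpha(x)$ between two encounters $i_l < i_m$ then produces the shift match $I^\alpha(\alpha)[j + (i_m - i_l)] = I^\alpha(\alpha)[j]$ for $j \in [1, n - i_m + 1]$ off the union of appropriately shifted copies of $F_l$ and $F_m$, yielding a duplicating interval $[i_m - i_l + 1, n - i_l + 1]$ of length $n - i_m + 1$ in $I^\alpha(\alpha)|_{[1, n]}$.

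The technical heart---and the anticipated main obstacle---is to choose $n \in \mathcal{B}$ and a pair (or small family of pairs) whose duplicating intervals cover more than $\tau n$ positions of $[1, n]$ while the combined exception set assembles into a single $D$-rare $\mathcal{R}_n$. The rarity cost of uniting two $L$-rare sets is absorbed by the flexibility of the constant in the definition of $D$-rare (the constant $3$ used in this paper versus the $2$ of \cite{smirnov2000symbolic} is precisely the room needed), and enlarging $L$ lets us combine a bounded number of such sets. For the coverage, the easy regime is when two encounters $i_l < i_m$ satisfy $i_m < (1-\tau)n$: a single pair then already yields a duplicating interval of length exceeding $\tau n$. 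The delicate regime, in which all encounters of a bad $n$ concentrate in $[(1-\tau)n, n]$ so that every pair-interval's right endpoint $n - i_l + 1$ collapses into a short prefix, is where the non-pre-periodic hypothesis is invoked: following Smirnov, the density $\geq 1 - 1/P$ of $\mathcal{B}$ forbids this concentration from persisting along all of $\mathcal{B}$ without producing exactly the eventual periodicity of $I^\alpha(\alpha)$ ruled out by the hypothesis, so one can always pass to a better bad $n$. Transplanting the remainder of Smirnov's Section~2 with $h$ in place of $p_c$ and gluing links in place of Euclidean balls then completes the proof.
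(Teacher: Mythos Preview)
Your outline diverges from the paper's proof at the technical heart, and the divergence leaves a real gap. The paper (following Smirnov) does \emph{not} fix a single bad $n$ and look for pairs of encounters within it. Instead, having found $N$ with at least $(1-1/P)N$ integers $n \in [N]$ satisfying $N(x,n)>M$, it introduces for each $i\in[0,N-1]$ the index
\[
  l(i) := \max\{\, j \le N : \alpha \in D^{(j-i)}(h^i(x)) \,\}
\]
and the interval $[i,l(i)]$. The point is that every bad $n$ is covered at least $M+1$ times by the family $\mathcal{J}=\{[i,l(i)]\}$, so after discarding intervals of length $\le D$ one obtains $\sum_{I\in\mathcal{J}'}|I|> (M/2)(1-1/P)N > N/(1-\tau)$. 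A Besicovitch covering argument then guarantees that \emph{some} interval $[i,l(i)]\in\mathcal{J}'$ has at least a $\tau$-fraction of its positions covered by intervals of $\mathcal{J}'$ starting to its right; each such overlap, shifted by $i-1$, is an $\mathcal{R}$-duplicating interval of $I^\alpha(\alpha)|_{[1,l(i)-i+1]}$ for a single $D$-rare $\mathcal{R}$ built from at most three of the $S_k$'s. If no such interval exists, a double-counting of $\sum_k (m(k)-1)$ yields $\sum|I|\le N/(1-\tau)$, a contradiction. No appeal to non-pre-periodicity enters this combinatorics.

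Your ``delicate regime'' is therefore handled by counting, not by periodicity. The claim that concentration of encounters in $[(1-\tau)n,n]$ along a dense $\mathcal{B}$ would force eventual periodicity of $I^\alpha(\alpha)$ is not substantiated: the encounter matches relate $I^\alpha(x)|_{[i,n]}$ to $I^\alpha(\alpha)|_{[1,n-i+1]}$, and with all $n-i\le\tau n$ these overlaps are short and say nothing directly about long-range periodicity of $I^\alpha(\alpha)$. In the paper the non-pre-periodic hypothesis is used only at the outset, to note that the equivalence classes $[x]_{\approx_\alpha}$ are finite gluing links and hence some $L$-digit-fixing cover exists, so that the failure of semi-CCE is not vacuous. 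Two smaller points: the failure of semi-CCE yields a \emph{single} $N$ with $\ge (1-1/P)N$ bad integers in $[N]$, not lower density $\ge 1-1/P$ in all of $\N$; and the final $n$ witnessing strong recurrence is $l(i)-i+1$ for the good interval found above, not one of the bad $n$'s you start with.
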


Here we repeat the proof in Smirnov's paper for the general setting.

\begin{proof}
    We first observe that since $\alpha$ is aperiodic, the collection of equivalence classes $\set{[x]_{\approx_\alpha}}_{x\in\T}$ are gluing links covering $\T$ that are digit-fixing for any $L$. Hence, there always exist some $L$-digit-fixing cover $\set{D(x)}_{x\in\T}$. Given arbitrary $D>0$ and $\tau <1$, we shall show that if $\alpha$ does not satisfy semi-CCE, then there exist integer $n>D$ and a $D$-rare set $\mathcal{R} \subseteq [n]$ such that \begin{equation*}
        \# \set{0 < i\leq  n: \text{$i$-th digit of $I^\alpha(\alpha)|_{[1,n]}$ is~$(D,\mathcal{R})$-duplicating}} >\tau n.
    \end{equation*}
    We let $L=D+1$ and set $M,P$ such that $M>2D+2$ and $(1-\tau)(M/2)(1-1/P)>1$. Since $\alpha$ does not satisfy semi-CCE, there exists $L$-digit-fixing cover $\set{D(y)}_{y\in T}$ and a point $x\in \T$ and $N>0$ such that at least $(1-1/P)N$ integers $n\in [N]$ satisfy $N(x,n)>M$. Now we define the following function :\begin{align*}
        l:&[N-1]\cup \set{0} \to [N] \cup \set{-\infty}\\
        &i\mapsto\max \set{i\leq j \leq N: \alpha \in D^{(j-i)}(h^i(x))},
    \end{align*} where we set $\max \emptyset = -\infty$. In other words, $l(i)$ is the largest integer $j$ not exceeding $N$ such that the gluing link of $h^i(x)$ we get from pulling back $D(h^j(x))$ contains $\alpha$. Since $\set{D(x)}$ is $L$-digiting fixing, it follows that $I^\alpha(h^i(x))_{[1,l(i)-i+1]} (= I^\alpha(x)|_{[i,l(i)]})$ and $I^\alpha(\alpha)|_{[1,l(i)-i+1]}$ match except possibly at a set of digits $S_i\subseteq [i,l(i)]$, where digits in $S_i$ are at least $D$ digits apart. We set $\mathcal{J}= \set{[i,l(i)]}_{i=0}^{N-1}$. 
    
    Now we observe that if $N(x,n)> M$ for some  integer $n$, then $\alpha$ is contained in at least $(M+1)$ of $D^{(i)}(h^{n-i}(x))'s$. When $\alpha \in D^{(i)}(h^{n-i}(x))$, we know that $l(n-i) \geq n$ and thus $n\in [n-i,l(n-i)]$. Hence, it follows that $\mathcal{J}$ covers the integer $n$ at least $M+1$ times. Therefore, since at least $(1-1/P)N$ integers in $[N]$ satisfy $N(x,n) > M$, $\mathcal{J}$ covers at least $(1-1/P)N$ integers at least $(M+1)$ times.
    
    Note that all the intervals in $\mathcal{J}$ start at different points. Thus we can order the intervals by their left ends. By removing intervals in $\mathcal{J}$ with length not greater than $D$, then we can a new collection $\mathcal{J}'$ that covers those integers at least $(1-1/P)N$ integers in $[N]$ at least $[M+1-(D+1)]> M/2$ times. Thus we have \begin{equation}\label{contract}
        \sum_{I\in\mathcal{J}'} |I|> \frac{M}{2}\cdot  \left(1-\frac{1}{P}\right)N \geq \frac{N}{1-\tau}.
    \end{equation}

    Given an interval $[i,l(i)]\in \mathcal{J}'$, we set \begin{equation*}
        \mathcal{J}_i:= \set{[j,l(j)] \in \mathcal{J}': i< j \leq l(i)}
    \end{equation*} to be the collection of intervals in $\mathcal{J'}$ intersecting $[i,l(i)]$ from the right. The standard Besicovitch covering argument implies that there exists $\mathcal{J}'_i \subseteq \mathcal{J}_i$ such that $\mathcal{J}'_i$ covers the same integers in $\mathcal{J}_i$ but each integer is covered at most twice. Note that by the definition of $S_k$ above, we have \begin{equation*}
        I^\alpha(x)[m] = I^\alpha(\alpha)[m-k+1]\quad \text{for all} \quad m\in [k,l(k)]\backslash S_k.
    \end{equation*}
    It follows that for any $[j,l(j)]$ in $\mathcal{J}'_i$, $I^\alpha(\alpha)|_{[i,l(i)]\cap [j,l(j)]-(i-1)}$ matches well with $I^\alpha(x)|_{[i,l(i)]\cap [j,l(j)]}$ (except at $S_i$), which in turn matches well with $I^\alpha(\alpha)|_{[j,l(j)]-(j-1)}$ (except at $S_j$). Therefore, the interval $[i,l(i)]\cap [j,l(j)]-(i-1)$ is $(S_i\cap S_j -(i-1))$-duplicating for $I^\alpha(\alpha)$. Since $\mathcal{J}'_i$ covers each integer at most twice, the set \begin{equation*}
         \mathcal{R} := S_i \cup \left(\bigcup_{[j,l(j)]\in \mathcal{J}'_i}S_j\right)
    \end{equation*} is rare and so is $\mathcal{R}-(i-1)$. Therefore, for each $[j,l(j)]\in \mathcal{J}'_i$, $[i,l(i)]\cap [j,l(j)]-(i-1)$ is a $(\mathcal{R}-(i-1))$-duplicating interval of $I^\alpha(\alpha)|_{[i,l(i)]-(i-1)}$, either of length greater than $D$ or containing the $(l(i)-i+1)$-th digit. If $\mathcal{J}_i$ (thus $\mathcal{J}'_i$) covers more than $\tau(l(i)-i)$ integers in $[i,l(i)]$, then more than $\tau(l(i)-i)$ digits of $I^\alpha(\alpha)|_{[1,l(i)-i+1]}$ are $(D,\mathcal{R}-(i-1))$-duplicating and we are done (recall that $l(i)-i>D$).

    To recapitulate, if there exists $[i,l(i)]\in \mathcal{J}'$ such that $\mathcal{J}_i$ defined above covers more than $\tau(l(i)-i)$ integers, then we are done. If such an interval does not exist, then for all $[i,l(i)]$'s, at most $\tau(l(i)-i)$ integers in the interval can be covered by some $[j,l(j)]$ with $j>i$. Given an integer $k$ in $[N]$, we denote the number of intervals in $\mathcal{J}'$ covering $k$ by $m(k)$. Then among all the intervals covering $k$, only for the rightmost one, $k$ is not covered by any interval right to it. This implies that \begin{align*}
         \sum_{k=1}^N(m(k)-1)
        =&\sum_{I\in\mathcal{J}'}\# \{i\in I: \text{$i$ is covered by some interval right to $I$}\}\\
        \leq &\sum_{I\in\mathcal{J}'} \tau |I|
        =\tau \sum_{I\in\mathcal{J}'} |I|.
    \end{align*} Using the fact that $\sum_{k=1}^Nm(k) = \sum_{I\in\mathcal{J}'} |I|$, we conclude $\
        \sum_{I\in\mathcal{J}'} |I| \leq N/(1-\tau)$, which contradicts $\eqref{contract}$. Thus the result follows.
\end{proof}

\begin{corollary}
    Semi-CCE condition is satisfied by all points on $\T$ except for a set of Hausdorff dimension zero.
\end{corollary}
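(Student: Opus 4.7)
The plan is to read the corollary as an immediate consequence of Theorem~\ref{smirn} together with Proposition~\ref{SR}. Concretely, I would argue by contraposition: if $\alpha \in \T$ fails the semi-CCE condition, then $\alpha$ must lie in one of two small sets, namely the set of pre-periodic angles (under $h(z)=z^d$) or the set of strongly recurrent angles.

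The first step is to observe that the pre-periodic angles form a countable subset of $\T$. Indeed, a point $\alpha$ is pre-periodic under $h$ if and only if $h^m(\alpha) = h^n(\alpha)$ for some $m<n$, which gives a countable union of finite solution sets. In particular, the pre-periodic set has Hausdorff dimension zero.

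Next, for any $\alpha$ that is not pre-periodic and fails semi-CCE, Theorem~\ref{smirn} forces $\alpha$ to be strongly recurrent. Hence
\begin{equation*}
    \{\alpha \in \T : \alpha \text{ fails semi-CCE}\} \;\subseteq\; \{\text{pre-periodic angles}\} \cup \{\text{SR angles}\}.
\end{equation*}
By Proposition~\ref{SR}, the SR angles form a set of Hausdorff dimension zero, and by the first step so do the pre-periodic angles. Since a finite union of sets of Hausdorff dimension zero still has Hausdorff dimension zero, the result follows.

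I do not anticipate any real obstacle here: the corollary is packaging, and all the analytic content has already been shipped into Theorem~\ref{smirn} and Proposition~\ref{SR}. The only small point to double-check is the correct notion of ``pre-periodic'' (under $h$, not under $\approx_\alpha$-dynamics), so that the countability of the exceptional set is transparent; this is why the hypothesis of Theorem~\ref{smirn} is stated in terms of $h$-pre-periodicity, and matches the usage needed for the corollary.
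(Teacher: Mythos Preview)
Your proposal is correct and is exactly the argument the paper intends: the corollary is stated without proof precisely because it follows immediately from Theorem~\ref{smirn} and Proposition~\ref{SR} together with the countability of pre-periodic angles, just as you wrote. There is nothing to add.
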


Here we also provide a criterion to check whether a covering ${D(x)}_{x\in\T}$ is digit-fixing. This criterion is relatively easy to check for the covering we will use in Section~\ref{theo_sec}. It is inspired by the proof of Lemma~2.1 in \cite{smirnov2000symbolic}.

\begin{proposition}\label{digit_fixing}
    Let $\set{D(x)}_{x\in\T}$ be a cover of $\T$ by gluing links. If for all $x\in\T$ and integer $n$ with $\alpha \in D^{(n)}(x)$, we have
    \begin{equation*} 
        D^{(n)}(x) \cap \set{h^i(\alpha)}_{i=1}^L = \emptyset,
    \end{equation*} then $\set{D(x)}$ is $L$-digit-fixing.
\end{proposition}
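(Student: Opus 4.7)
The plan is to trace, position by position, how the itineraries of two points $x_1,x_2\in D^{(n)}(y)$ can diverge, and then use the hypothesis as an exclusion principle forcing any two differing positions to be more than $L$ apart.

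Fix $n\geq 0$, $y\in\T$, and $x_1,x_2\in D^{(n)}(y)$. By iterating the identity $h(D^{(m)}(z)) = D^{(m-1)}(h(z))$ from the corollary following~\eqref{seq}, we have $h^{k-1}(x_j) \in D^{(n-k+1)}(h^{k-1}(y))$ for every $k\in[1,n+1]$. The letter $I^\alpha(x_j)[k]$ is determined by where $h^{k-1}(x_j)$ sits in the partition of $\T$ induced by $\set{\star_l}$, so the two $k$-th letters can differ only when the gluing link $D^{(n-k+1)}(h^{k-1}(y))$ contains some $\star_l$. Using $h(\star_l)=\alpha$ and Proposition~\ref{gluing_links}, this is equivalent, for $k\in[1,n]$, to $\alpha \in h(D^{(n-k+1)}(h^{k-1}(y))) = D^{(n-k)}(h^k(y))$. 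I will call such $k$ an \emph{encounter position} and denote their collection by $\mathcal{E}$.

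The core step is to show that any two distinct elements of $\mathcal{E}$ are more than $L$ apart. Let $k_1<k_2$ both lie in $\mathcal{E}$. Applying $h^{k_2-k_1}$ to $\alpha \in D^{(n-k_1)}(h^{k_1}(y))$ and reusing the corollary gives $h^{k_2-k_1}(\alpha) \in D^{(n-k_2)}(h^{k_2}(y))$. Since $\alpha \in D^{(n-k_2)}(h^{k_2}(y))$ as well, the hypothesis applied at $x=h^{k_2}(y)$ with index $n-k_2$ yields $D^{(n-k_2)}(h^{k_2}(y)) \cap \set{h^i(\alpha)}_{i=1}^L = \emptyset$, forcing $k_2-k_1 > L$.

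Since the positions in $[1,n]$ where the two itineraries differ are contained in $\mathcal{E}$, the desired spacing holds for $[1,n]$. The terminal digit $n+1$ is addressed by the same encounter-and-exclusion template: it differs precisely when $D(h^n(y))$ contains some $\star_l$, and any prior encounter $k_1\in\mathcal{E}$ contributes $h^{n-k_1}(\alpha)\in D(h^n(y))$; pushing the hypothesis forward along the chain of pullbacks then forces $n+1-k_1 > L$. I expect the main subtlety to lie in this boundary case, where $D(h^n(y))$ is the top-level gluing link rather than a proper pullback, so one must carefully keep the exclusion hypothesis applicable by tracking the inclusion through the iterates of $h$; the interior digits $[1,n]$ follow uniformly from the encounter-and-exclusion mechanism.
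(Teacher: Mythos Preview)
Your argument is essentially the paper's own proof: both observe that a differing digit $j$ forces $\{\star_1,\star_2\}$ to meet $D^{(n-j+1)}(h^{j-1}(y))$ and hence, for $j\le n$, that $\alpha\in D^{(n-j)}(h^{j}(y))$, and then push $\alpha$ forward from an earlier such index to contradict the hypothesis at the later one. The paper does not isolate the boundary case $j=n+1$ at all---it simply writes $D^{(n-j)}$ uniformly for $j\le n+1$---so your explicit caution there is, if anything, more scrupulous than the original.
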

\begin{proof}
    Given such a covering $\set{D(x)}$, suppose $x_1$ and $x_2$ are both in $D^{(n)}(y)$. If the $j$-th digits of $I^\alpha(x_1)$ and $I^\alpha(x_2)$ differ with $j\leq n+1$, then either one of $h^{j-1}(x_1)$ and $h^{j-1}(x_2)$ is in $\set{\star_1,\star_2}$ or the two are separated by $\set{\star_1,\star_2}$. Therefore, $h^{j-1}(D^{(n)}(y)) =D^{(n-j+1)}(h^{j-1}(y))$, which contains $\set{h^{j-1}(x_1),h^{j-1}(x_2)}$, has to intersect $\set{\star_1,\star_2}$ as well. It follows that $\alpha \in D^{(n-j)}(h^j(y))$. By our assumption, this would mean \begin{equation*}
        D^{(n-j)}(h^j(y)) \cap \set{h^i(\alpha)}_{i=1}^L = \emptyset.
    \end{equation*}
    Now if $I^\alpha(x_1)$ and $I^\alpha(x_2)$ also differ at the digit $k \in [j-L,j)$, by the same argument, we have $\alpha \in D^{(n-k)}(h^k(y))$ and thus \begin{equation*}
        h^{j-k}(\alpha) \in h^{j-k}(D^{(n-k)}(h^k(y))) = D^{(n-j)}(h^j(y)),
        \end{equation*}
        which contradicts our assumption. Thus the digits at which the two words differ have to be at least $L$ digit apart.
\end{proof}

\section{Generalized Cylinder Sets and Legal Words} \label{GCS_sec}
To recapitulate, we have shown that CCE condition implies the existence of Hölder continuous welding and non-SR implies semi-CCE. If we can find some gluing link covering $\set{D(x)}$ that satisfies the nice gluing property and is digit-fixing, then we can link two results. In the case when $d=2$, the candidate we have is the \textit{generalized cylinder sets}. In this section, we will present their definition and give a nice cover of $\D$ by such sets.

\subsection{Generalized Cylinder Sets}
From now on, we assume $d=2$ and again set $L=L_1$ and $R=L_2$. The definitions in this section can be generalized to $d\geq 3$ but for now we will restrict our attention for simplicity. 

\begin{lemma}
    Given a finite word $u\in\set{L,R}^*$, since $\alpha$ is aperiodic, $\widetilde u(\star_1)$ and $\widetilde u(\star_2)$ are well-defined. We set $l_u$ to be the hyperbolic geodesic in $\overline{\D}$ connecting $\widetilde u(\set{\star_1,\star_2})$. Then for $u_1\neq u_2$, the two geodesics $l_{u_1}$ and $l_{u_2}$ are disjoint.
\end{lemma}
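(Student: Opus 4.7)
I would recast the disjointness of $l_{u_1}$ and $l_{u_2}$ in terms of their endpoints on $\T$: two hyperbolic geodesics in $\overline{\D}$ with endpoints on $\T$ are disjoint exactly when their four endpoints are pairwise distinct and the two endpoint pairs do not link on $\T$. So it suffices to show that the four points $\widetilde{u_1}(\star_1), \widetilde{u_1}(\star_2), \widetilde{u_2}(\star_1), \widetilde{u_2}(\star_2)$ are pairwise distinct and that $\{\widetilde{u_1}(\star_1), \widetilde{u_1}(\star_2)\}$ does not link $\{\widetilde{u_2}(\star_1), \widetilde{u_2}(\star_2)\}$ on $\T$.

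My plan is to prove both properties simultaneously by strong induction on $\max(|u_1|,|u_2|)$. The base case reduces to checking that $l_\epsilon, l_L, l_R$ are pairwise disjoint, which follows from the fact that the endpoints of $l_L$ lie in the interior of the arc $C(L)=\overline{\star_1\frown\star_2}$ and the endpoints of $l_R$ lie in the interior of $C(R)$, these two open arcs being disjoint. Aperiodicity of $\alpha$ is used here to rule out coincidences such as $\widetilde L(\star_i)=\star_j$, which would force $\alpha^2=\alpha$, hence $\alpha=1$. For the inductive step I would split on the first letters of $u_1,u_2$. If $u_1=\epsilon$ or $u_1[1]\neq u_2[1]$, then the endpoints of $l_{u_1}$ and $l_{u_2}$ lie in complementary open arcs among $\{\mathrm{int}(C(L)),\mathrm{int}(C(R))\}$ (or in $\{\star_1,\star_2\}$ versus an open arc, in the $\epsilon$-case), so the two geodesics sit in complementary closed hyperbolic half-disks cut out by $l_\epsilon$ and are therefore disjoint. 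If $u_1[1]=u_2[1]=c$, write $u_i=cu_i'$ with $u_1'\neq u_2'$; then $\widetilde{u_i}(\star_j)=\widetilde c(\widetilde{u_i'}(\star_j))$, and the four endpoints of $l_{u_1}, l_{u_2}$ are the images under $\widetilde c$ of the four endpoints of $l_{u_1'}, l_{u_2'}$. Since $\widetilde c:\T\setminus\{\alpha\}\to \mathrm{int}(C(c))$ is an orientation-preserving homeomorphism onto a sub-arc of $\T$, it preserves both injectivity and the non-linking relation on quadruples, so the inductive hypothesis applied to $(u_1',u_2')$ (whose $\max$ is strictly smaller) yields the desired conclusion for $(u_1,u_2)$.

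The main technical hurdles lie in the case $u_1[1]=u_2[1]$. First, one must confirm that all the points $\widetilde{u_i'}(\star_j)$ lie in the domain of $\widetilde c$, i.e.\ are not equal to $\alpha$; but $\widetilde{u_i'}(\star_j)=\alpha$ would give $\star_j=h^{|u_i'|}(\alpha)=\alpha^{2^{|u_i'|}}$, hence $\alpha^{2^{|u_i'|+1}}=\alpha$, contradicting aperiodicity. Second, one must verify that the linking relation on four points in the sub-arc $\mathrm{int}(C(c))$ agrees with their linking when viewed inside the full circle $\T$; this is immediate since the cyclic order on the arc is inherited from that of $\T$, so an orientation-preserving homeomorphism of $\T\setminus\{\alpha\}$ onto $\mathrm{int}(C(c))$ cannot change whether a quadruple interleaves. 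With these two observations the induction closes and the lemma follows.
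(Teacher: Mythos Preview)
Your proof is correct and takes a genuinely different route from the paper's. The paper argues by contradiction via the lamination: if $l_{u_1}$ and $l_{u_2}$ meet, the non-crossing property of $\sim_\alpha$ (Lemma~\ref{non-crossing}) forces $\widetilde{u_1}(\star_1)\approx_\alpha\widetilde{u_2}(\star_1)$, and comparing the itineraries $I^\alpha(\widetilde{u_i}(\star_1))=u_i\star I^\alpha(\alpha)$ then gives $u_1=u_2$. Your induction on $\max(|u_1|,|u_2|)$ avoids the lamination entirely: it uses only that $\widetilde L$ and $\widetilde R$ are orientation-preserving homeomorphisms of $\T\setminus\{\alpha\}$ onto the disjoint open arcs $\star_1\frown\star_2$ and $\star_2\frown\star_1$, together with aperiodicity to keep all the relevant points away from $\alpha$. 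The paper's argument is shorter and ties the lemma directly into the lamination framework that drives the rest of the paper; your argument is more elementary and self-contained, needing neither the cited non-crossing lemma nor any facts about $\approx_\alpha$, and it handles the case $|u_1|\neq|u_2|$ transparently.
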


\begin{proof}
    Suppose we have $u_1\neq u_2$ such that $l_{u_1}$ and $l_{u_2}$ intersect. By the non-crossing property in Lemma~\ref{non-crossing}, we have \begin{equation*}
        \widetilde{u_1}(\star_1) \approx_\alpha \widetilde{u_2}(\star_1).
    \end{equation*} However, we have \begin{equation*}
        I^\alpha(\widetilde{u_1}(\star_1)) = u_1\star I^\alpha(\alpha),\quad I^\alpha(\widetilde{u_2}(\star_1)) = u_2\star I^\alpha(\alpha).
    \end{equation*} This would imply that $u_1=u_2$, causing contradiction.
\end{proof}

\begin{definition}[Generalized Cylinder Sets]
    Given $l_u$ defined above, we set $L_n = \set{l_u: |u| = n}$. The geodesics in $L_n$ cut $\overline \D$ into several components. We denote this collection of components by $\mathcal{D}_n$ and set $\mathcal{GCS}_n = \set{D\cap \T: D\in\mathcal{D}_n}$. Since $\widetilde u(\star_1) \approx_\alpha \widetilde u(\star_2)$, one can easily check that $\mathcal{GCS}_n$ is a collection of gluing links. We call each member in this collection a \textit{generalized cylinder set of degree} $n$.
\end{definition}

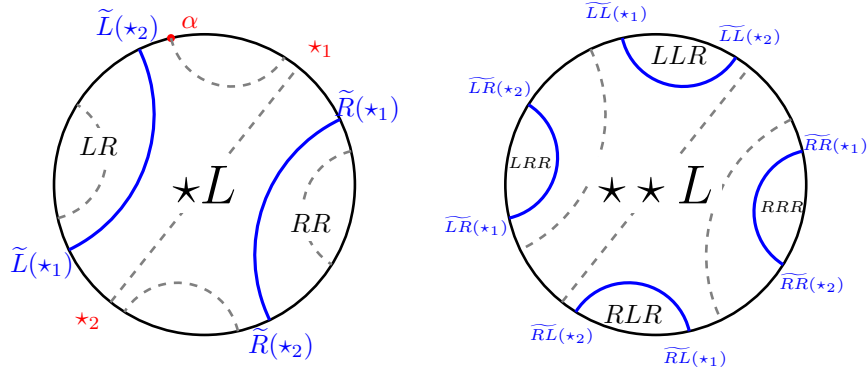
\begin{figure}[ht]
    \centering
    \begin{tikzpicture}[scale=0.5]
        \draw [line width=1pt] (0,0) circle (4);
        \fill [red] ({4*cos(720/7)},{4*sin(720/7)}) circle (3pt) node [red,above right=0.6pt]{$\alpha$};
    
        \draw [dashed, line width=1pt, gray] ({-4*cos(360/7)},{-4*sin(360/7)}) node [red, below left] {$\star_2$} -- ({4*cos(360/7)},{4*sin(360/7)}) node [red, above right] {$\star_1$};
        
        \harcc{0}{0}{4}{180/7}{360*9/28+180}{line width=1.2pt, blue}
        \harcc{0}{0}{4}{360*9/28}{180/7+180}{line width=1.2pt, blue}

        \harcc{0}{0}{4}{720/7}{180*9/28+360}{dashed, line width=1pt, gray}
        \harcc{0}{0}{4}{720/7-180}{180*9/28+180}{dashed, line width=1pt, gray}
        \harcc{0}{0}{4}{90/7}{180*23/28+180}{dashed, line width=1pt, gray}
        \harcc{0}{0}{4}{90/7+180}{180*23/28+360}{dashed, line width=1pt, gray}

        \nodeatc{0}{0}{0}{95}{\Huge $\star L$}{fill=white}
        \nodeatc{0}{0}{3}{160}{$LR$}{fill=white}
        \nodeatc{0}{0}{3}{340}{$RR$}{fill=white}
        
        \draw [line width=1pt] (12,0) circle (4);
    
        \draw [dashed, line width=1pt, gray] ({12-4*cos(360/7)},{-4*sin(360/7)}) -- ({12+4*cos(360/7)},{4*sin(360/7)});
        
        \harcc{12}{0}{4}{180/7}{360*9/28+180}{dashed, line width=1pt, gray}
        \harcc{12}{0}{4}{360*9/28}{180/7+180}{dashed, line width=1pt, gray}

        \harcc{12}{0}{4}{720/7}{180*9/28+360}{line width=1.2pt, blue}
        \harcc{12}{0}{4}{720/7-180}{180*9/28+180}{line width=1.2pt, blue}
        \harcc{12}{0}{4}{90/7}{180*23/28+180}{line width=1.2pt, blue}
        \harcc{12}{0}{4}{90/7+180}{180*23/28+360}{line width=1.2pt, blue}

        \nodeatc{12}{0}{0}{95}{\Huge $\star\star L$}{fill=white}
        \nodeat{12}{0}{3.5}{80}{$LLR$}
        \nodeat{12}{0}{3.5}{260}{$RLR$}
        \nodeat{12}{0}{3.4}{170}{\tiny$LRR$}
        \nodeat{12}{0}{3.4}{350}{\tiny $RRR$}
        
        \nodeatc{0}{0}{4.8}{180/7}{$\widetilde{R}(\star_1)$}{color = blue}
        \nodeatc{0}{0}{4.7}{360*9/28+180}{$\widetilde{R}(\star_2)$}{color = blue}
        \nodeatc{0}{0}{4.8}{180/7+180}{$\widetilde{L}(\star_1)$}{color = blue}
        \nodeatc{0}{0}{4.7}{360*9/28}{$\widetilde{L}(\star_2)$}{color = blue}

        \nodeatc{12}{0}{4.7}{720/7}{\tiny $\widetilde{LL}(\star_1)$}{color= blue}
        \nodeatc{12}{0}{4.7}{180*9/28+360}{\tiny $\widetilde{LL}(\star_2)$}{color= blue}

        \nodeatc{12}{0}{4.7}{720/7+180}{\tiny $\widetilde{RL}(\star_1)$}{color= blue}
        \nodeatc{12}{0}{4.7}{180*9/28+180}{\tiny $\widetilde{RL}(\star_2)$}{color= blue}

        \nodeatc{12}{0}{4.9}{720/7+90}{\tiny $\widetilde{LR}(\star_1)$}{color= blue}
        \nodeatc{12}{0}{4.9}{180*9/28+90}{\tiny $\widetilde{LR}(\star_2)$}{color= blue}

        \nodeatc{12}{0}{4.9}{720/7+270}{\tiny $\widetilde{RR}(\star_1)$}{color= blue}
        \nodeatc{12}{0}{4.9}{180*9/28+270}{\tiny $\widetilde{RR}(\star_2)$}{color= blue}

    \end{tikzpicture}
    \caption{The generalized cylinder sets for $\alpha=4\pi/7$ associated with words of length not greater than $3$, where $\alpha \in C(LLR)\subseteq C(\star L) \subseteq C(L)$. The dashed lines indicates the boundary of the cylinder sets in Figure~\ref{fig:cylinder}. Note that $C(\star\star L)$ is a union of cylinder sets $C(LLL)$, $C(LRL)$, $C(RRL)$ and $C(RLL)$.}
    \label{fig:generalized_cylinder}
\end{figure}

\begin{lemma}
    Given $x\in \T$ such that $h^{n+1}(x) \neq \alpha$, there exists a unique generalized cylinder set in $\mathcal{GCS}_n$ containing $x$, which we call $GCS_n(x)$.
\end{lemma}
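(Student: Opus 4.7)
The plan is to reduce both existence and uniqueness to the single claim that $x$ is not an endpoint of any chord $l_u$ with $|u| = n$, and then to derive this from the hypothesis $h^{n+1}(x) \neq \alpha$ via a one-line computation. The rest is pure planar topology about $\overline{\D}$ cut by a finite disjoint collection of hyperbolic chords.

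First I would note that the union $\bigcup L_n$ is a finite family of pairwise disjoint closed chords in $\overline{\D}$ by the preceding lemma, and each chord meets $\T$ exactly in its two endpoints $\widetilde u(\star_1), \widetilde u(\star_2)$. Hence $\overline{\D} \setminus \bigcup L_n$ is a finite disjoint union of open connected components; these are exactly the members of $\mathcal{D}_n$. For any $y \in \T$ that is not an endpoint of some chord in $L_n$, the point $y$ lies in $D \cap \T$ for exactly one $D \in \mathcal{D}_n$: existence because the chords avoid $y$ so a small neighborhood of $y$ in $\overline{\D}$ lies in a single component, and uniqueness because the components are pairwise disjoint. Thus the statement reduces to showing that $x \neq \widetilde u(\star_i)$ for every word $u$ of length $n$ and every $i \in \{1,2\}$.

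Now I would verify this reduction by contrapositive: if $x = \widetilde u(\star_i)$ for some $|u| = n$ and $i \in \{1,2\}$, then applying $h$ exactly $n$ times peels off the word $u$, yielding $h^n(x) = \star_i$, and one more application gives $h^{n+1}(x) = h(\star_i) = \alpha$, since $\star_1, \star_2$ are by definition the two square roots of $\alpha$. This directly contradicts the hypothesis $h^{n+1}(x) \neq \alpha$, so the hypothesis guarantees $x$ avoids all chord endpoints in $L_n$, and the preceding paragraph then produces the unique $D \in \mathcal{D}_n$ with $x \in D \cap \T$; we define this $D \cap \T$ to be $GCS_n(x)$.

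I do not expect any real obstacle: the disjointness of the chords is already in hand from the previous lemma, aperiodicity of $\alpha$ is only needed (implicitly, via that lemma) to make the chords well-defined and disjoint, and the key calculation $h^{n+1}(\widetilde u(\star_i)) = \alpha$ is immediate from the definition of $\widetilde u$ as an iterated inverse branch of $h$. The only mild point to be careful about is keeping straight that elements of $\mathcal{GCS}_n$ are by definition subsets of $\T$ of the form $D \cap \T$ with $D \in \mathcal{D}_n$ open, so that the uniqueness statement genuinely requires $x$ to miss the chord endpoints rather than merely the chords themselves; the argument above handles exactly this.
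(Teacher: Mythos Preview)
Your proof is correct and follows essentially the same approach as the paper: both reduce the claim to checking that $x$ is not an endpoint $\widetilde{u}(\star_i)$ for some $|u|=n$, and verify this via the identity $h^{n+1}(\widetilde{u}(\star_i)) = \alpha$. The paper's proof is a one-sentence version of your argument, omitting the routine planar-topology discussion you spell out.
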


\begin{proof}
    It suffices to check that $x$ is not $\widetilde{u}(\star_i)$ for some $|u| =n$, which is guaranteed by our assumption as $h^{n+1}(\widetilde{u}(\star_i)) = \alpha$ for any $|u|=n$.
\end{proof}

It is also easy to check that generalized cylinder sets behave well in the forward iteration. That is,\begin{equation*}
    h(GCS_n(x)) = GCS_{n-1}(h(x)).
\end{equation*} 
In fact, so do they in the backward iteration as indicated by the following.

\begin{lemma}\label{comp}
    For any $x\in\T$ such that $h^{n+2}(x)\neq \alpha$, we have \begin{equation*}
        GCS^{(1)}_n(x) = \comp^\alpha_x h^{-1}(GCS_n(h(x))) = GCS_{n+1}(x).
    \end{equation*} In consequence, for any $i>0$, if $h^{n+i+1}(x)\neq \alpha$, we have \begin{equation*}
        GCS^{(i)}_n(x) = GCS_{n+i}(x).
    \end{equation*}
\end{lemma}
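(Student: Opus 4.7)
My plan is to prove the $i=1$ case directly and then iterate using the factorization $D^{(i)}(x) = \comp^\alpha_x h^{-1}(D^{(i-1)}(h(x)))$ from Section~\ref{pulling_back}. The key combinatorial fact is that the chords in $L_{n+1}$ are precisely the $h$-preimages of the chords in $L_n$: for each $|u'|=n$, the two preimages of $l_{u'}$ under $z\mapsto z^2$ have the same $\T$-endpoints as $l_{Lu'}$ and $l_{Ru'}$. Consequently, the elements of $\mathcal{D}_{n+1}$ are exactly the connected components of $h^{-1}(D)$ as $D$ ranges over $\mathcal{D}_n$.

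Let $D \in \mathcal{D}_n$ be the component with $D \cap \T = GCS_n(h(x))$. Following Proposition~\ref{gluing_links}, I split into two cases. If $\alpha \notin D \cap \T$, then $h$ is an unbranched double cover over a neighborhood of $D$, so $h^{-1}(D) = \widetilde L(D) \sqcup \widetilde R(D)$ has two connected components, each an element of $\mathcal{D}_{n+1}$; the one containing $x$ yields $\comp^\alpha_x h^{-1}(GCS_n(h(x))) = GCS_{n+1}(x)$. If $\alpha \in D \cap \T$, then aperiodicity of $\alpha$ forces $\alpha$ to lie in the interior of $D \cap \T$, so both $\widetilde L(D)$ and $\widetilde R(D)$ reach the branch fiber $\{\star_1, \star_2\}$; hence $h^{-1}(D)$ is connected and forms a single element of $\mathcal{D}_{n+1}$. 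Combined with Proposition~\ref{gluing_links}, this yields $\comp^\alpha_x h^{-1}(GCS_n(h(x))) = h^{-1}(GCS_n(h(x))) = GCS_{n+1}(x)$.

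For $i > 1$, I would proceed by induction: apply $GCS^{(i)}_n(x) = \comp^\alpha_x h^{-1}(GCS^{(i-1)}_n(h(x)))$, use the inductive hypothesis $GCS^{(i-1)}_n(h(x)) = GCS_{n+i-1}(h(x))$, and invoke the base case with $n$ replaced by $n+i-1$; the assumption $h^{n+i+1}(x)\neq\alpha$ ensures every intermediate $GCS_m(\cdot)$ is well-defined. The main technical obstacle is the identification of $\mathcal{D}_{n+1}$ with the $h$-preimage components of $\mathcal{D}_n$: the $h$-preimage of a geodesic chord need not itself be a geodesic, so one must argue via planar non-crossing topology (descending from Lemma~\ref{non-crossing}) that these preimage curves cut $\overline{\D}$ into the same topological pieces as the geodesic chords $l_{Lu'}, l_{Ru'}$.
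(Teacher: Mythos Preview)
Your proposal is correct, but it takes a more laborious route than the paper's one-line argument. The paper simply observes that both $GCS^{(1)}_n(x)$ and $GCS_{n+1}(x)$ are gluing links containing $x$ whose endpoints lie in $\{\widetilde u(\star_i):|u|=n+1\}$, and that such a gluing link is unique. Uniqueness holds because each gap of such a gluing link is spanned by some chord $l_u$ with $|u|=n+1$, and no chord $l_v$ can have both endpoints interior to the link (otherwise $h$ would send $\widetilde v(\star_1),\widetilde v(\star_2)$ into the interior of $GCS_n(h(x))$, contradicting that they are endpoints of $l_{v|_{[2,n+1]}}\in L_n$). This immediately forces the link to be a single element of $\mathcal{GCS}_{n+1}$.

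Your approach instead reconstructs $\mathcal{GCS}_{n+1}$ as the $\alpha$-components of $h^{-1}$ applied to $\mathcal{GCS}_n$, splitting into the two cases of Proposition~\ref{gluing_links}. This is perfectly valid and arguably more transparent about \emph{why} pulling back by $h$ raises the degree by one. The technical obstacle you flag (preimages of geodesic chords are not geodesic) is real for the regions $\mathcal D_{n+1}\subset\overline\D$, but it evaporates once you notice the lemma only concerns the $\T$-traces: since the preimage arcs and the geodesics $l_{Lu'},l_{Ru'}$ share the same $\T$-endpoints and neither family has crossings, they induce identical partitions of $\T$ into gluing links. So you can run your entire argument on $\T$ via Proposition~\ref{gluing_links} and never touch $\overline\D$. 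The paper's uniqueness trick buys brevity; your case analysis buys a clearer picture of the inductive structure, which is closer in spirit to how Proposition~\ref{rep} is later proved.
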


\begin{proof}
    Note that both $GCS^{(1)}_n(x)$ and $GCS_{n+1}(x)$ are gluing links containing $x$ whose end points are in $\set{\widetilde{u}(\star_i)|~|u|=n+1}$. Such a gluing link is unique.
\end{proof}

Note that given a integer $n$, not all the points $x$ have corresponding $GCS_n(x)$. Therefore, $\set{GCS_n(x)}$ does not form a covering of $\T$.

\subsection{Legal Words and Free Digits}
The following proposition tells us that like cylinder sets, the information of a generalized cylinder set can be encoded by words.

\begin{proposition}[Representation of generalized cylinder sets]\label{rep}
    For each $C \in \mathcal{GCS}_n$, there exists a unique word $u \in \set{L,R,\star}^{n+1}$ (see Figure~\ref{fig:generalized_cylinder}) such that \begin{align*}
        C&= \set{x\in \T: \text{if $u[i]\in\set{L,R}$, then $h^{i-1}(x)$ lies in the semi-circle $C(u[i])$} }\\
        &=\set{x\in \T: \text{if $u[i]\in\set{L,R}$, then $I^\alpha(x)[i] = u[i]$} }.
    \end{align*} In this case, we write $C= C(u)$.
\end{proposition}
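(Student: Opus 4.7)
The plan is induction on $n$. The base case $n=0$ is immediate: $L_0 = \set{l_\emptyset}$ is the single geodesic from $\star_1$ to $\star_2$, cutting $\overline{\D}$ into two components whose $\T$-parts are $C(L)$ and $C(R)$; assigning $u=L$ and $u=R$ respectively, both set descriptions hold trivially.

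For the inductive step I would exploit the forward compatibility: since $h \circ \widetilde{v} = \widetilde{v'}$ where $v' = v|_{[2,|v|]}$, the $\T$-endpoints of $h(l_u)$ and $l_{u'}$ coincide, namely $\widetilde{u'}(\star_1)$ and $\widetilde{u'}(\star_2)$. Because the partition of $\overline{\D}$ induced by a family of non-crossing chords depends only on their endpoints on $\T$, the map $h$ pushes a component $C \in \mathcal{GCS}_n$ into a unique component $C' \in \mathcal{GCS}_{n-1}$, to which the inductive hypothesis assigns a word $u' \in \set{L,R,\star}^n$. Conversely, $h^{-1}(C')$ either splits into two components $\widetilde{L}(C') \subseteq C(L)$ and $\widetilde{R}(C') \subseteq C(R)$ (when $\alpha \notin C'$), or remains a single component connected through $\star_1, \star_2$ (when $\alpha \in C'$). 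In the first case I would set $u[1] = L$ or $R$ according to which semicircle $C$ lies in; in the second I would set $u[1] = \star$. Defining $u := u[1]u'$ yields a candidate word of length $n+1$.

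To verify the set description, I unfold: $x \in C$ iff $h(x) \in C'$ and, when $u[1] \in \set{L,R}$, $x$ additionally lies in $\text{int}\,C(u[1])$. Using the inductive hypothesis applied to $C'$ together with the shift identity $I^\alpha(x)[i] = I^\alpha(h(x))[i-1]$, this condition translates exactly into the statement in the proposition. The equivalence of the two formulations --- closed semicircle $C(u[i])$ versus the itinerary condition $I^\alpha(x)[i] = u[i]$ --- is then automatic: the inductive construction forces $C \cap h^{-(i-1)}(\set{\star_1,\star_2}) = \emptyset$ whenever $u[i] \in \set{L,R}$, so no $x \in C$ has $h^{i-1}(x)$ on the boundary of a semicircle, and the two descriptions agree on $C$.

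Uniqueness follows by direct comparison: if $u \neq u'$ both represent the same $C$, I would pick any position where they disagree and perform a short case analysis ($L$ vs $R$ gives essentially disjoint sets; $L$ or $R$ vs $\star$ gives strict inclusion), contradicting $C$ being a nondegenerate arc-union. The main obstacle I expect is justifying that $h$ respects the combinatorial component structure despite not preserving hyperbolic geodesics. I would resolve this by noting that $\set{h(l_u) : |u| = n}$, though not a family of geodesics, is a family of non-crossing curves pinned to the same $\T$-endpoints as $L_{n-1}$, and hence partitions $\overline{\D}$ into the same components; this relies crucially on the aperiodicity of $\alpha$ (ensuring distinct endpoints) together with the non-crossing lemma stated just before the definition of generalized cylinder sets.
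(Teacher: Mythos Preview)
Your proposal is correct and follows essentially the same route as the paper: induction on $n$, identifying $h(C)$ with a member of $\mathcal{GCS}_{n-1}$ carrying a word $u'$, and then prepending $L$, $R$, or $\star$ according to whether $\alpha \notin C(u')$ (two preimage components) or $\alpha \in C(u')$ (one component). The paper's proof is considerably terser---it simply asserts that $h(C)\in\mathcal{GCS}_{n-1}$ and omits the verification of the set description and the uniqueness argument---so your added justification (especially the observation that the component structure depends only on the $\T$-endpoints of the chords, not on the geodesics themselves) is a genuine improvement in rigor rather than a different method.
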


\begin{remark}\label{con_legal}
    Given any $u\in\set{L,R,\star}^*$, we can always define \begin{equation*}
    C(u):= \set{x\in \T: \text{if $u[i]\in\set{L,R}$, then $h^{i-1}(x)$ lies in the semi-circle $C(u_i)$} }.
    \end{equation*} When $u$ does not contain any $\star$, $C(u)$ will be the cylinder set for $u$. Thus our notation is consistent.
\end{remark}

\begin{proof}[Proof of Proposition~\ref{rep}]
    We prove this by induction. For $n=0$, $\mathcal{GCS}_0$ are just $\set{C(L),C(R)}$. Now if take any $C\in \mathcal{GCS}_n$ and suppose $h(C)\in \mathcal{GCS}_{n-1}$ is represented by $C(u)$ for some $u\in \set{L,R,\star}^n$. We have the following two cases \begin{enumerate}
	\item If $\alpha \notin C(u)$, then $C$ is either $\widetilde L (C(u))$ or $\widetilde R (C(u))$. We represent $C$ by $Lu$ or $Ru$ respectively.
	\item If $\alpha \in C(u)$, then $C$ is the union of the closures of $\widetilde L (C(u))$ and $\widetilde R (C(u))$. We represent it by $\star u$.
	\end{enumerate}
\end{proof}

\begin{corollary}\label{stars}
    Let $C(u)$ be a generalized cylinder set with $u\in \set{L,R,\star}^n$. Then for any $k\in[n]$, $C(u|_{[k,n]})$ is also a well-defined generalized cylinder set. Moreover, $u[k] = \star$ if and only if $\alpha \in C(u|_{[k+1,n]})$.
\end{corollary}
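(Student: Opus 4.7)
The plan is to derive both assertions by iterating the one-step forward-invariance property that is already built into the inductive construction of Proposition~\ref{rep}: for any representing word $v \in \set{L,R,\star}^m$ of a generalized cylinder set $C(v) \in \mathcal{GCS}_{m-1}$, one has
\begin{equation*}
    h(C(v)) = C(v|_{[2,m]}),
\end{equation*}
and the right-hand side is again a generalized cylinder set, this time in $\mathcal{GCS}_{m-2}$.

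To establish this one-step reduction, I would simply unpack the three-way case analysis performed in the inductive step of Proposition~\ref{rep}: the letter $v[1] \in \set{L,R,\star}$ records whether $C(v)$ equals $\widetilde L(C(v|_{[2,m]}))$, $\widetilde R(C(v|_{[2,m]}))$, or the closure of the union of the two lifts, with the third alternative occurring precisely when $\alpha \in C(v|_{[2,m]})$. In each of the three cases, applying $h$ collapses $C(v)$ onto $C(v|_{[2,m]})$, which establishes both the displayed identity and the fact that its image is a generalized cylinder set of one lower degree.

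For the first claim of the corollary, I would iterate this reduction $k-1$ times starting from $u$ to obtain
\begin{equation*}
    h^{k-1}(C(u)) = C(u|_{[k,n]}),
\end{equation*}
and conclude that the right-hand side is a well-defined generalized cylinder set in $\mathcal{GCS}_{n-k}$, uniquely represented by $u|_{[k,n]}$ via Proposition~\ref{rep}. For the second claim, I would apply the same one-step reduction to $C(u|_{[k,n]}) \in \mathcal{GCS}_{n-k}$, whose representing word has first letter $u[k]$. By the trichotomy recalled above, $u[k] = \star$ precisely when $C(u|_{[k,n]})$ is the closure of the union of the two $h$-preimages of $C(u|_{[k+1,n]})$, and this alternative occurs exactly when $\alpha \in C(u|_{[k+1,n]})$. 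Since all of the combinatorial bookkeeping was already carried out in the proof of Proposition~\ref{rep}, I do not anticipate any serious obstacle; the corollary is essentially a clean repackaging of that inductive construction via iterated application of $h$.
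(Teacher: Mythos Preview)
Your proposal is correct and matches the paper's approach: the corollary is stated without a separate proof because it is an immediate consequence of the inductive construction in Proposition~\ref{rep}, and you have simply made that implicit derivation explicit by iterating the one-step reduction $h(C(v)) = C(v|_{[2,m]})$ and reading off the trichotomy for the first letter.
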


\begin{definition}[Free digits \& legal words] If $u\in \set{L,R,\star}^*$ can represent some generalized cylinder set, we call $u$ a \textit{legal word} and call all the digits of $u$ at which we have $\star$ the \textit{free digits} of $u$.  
\end{definition}

\begin{lemma}\label{legal}
    Given $g\in \set{L,R}^n$, there exists a unique $u\in\set{L,R,\star}^n$ obtained by replacing some digits of $g$ by $\star$ such that $u$ is legal and $C(u)$ contains $g(\set{\star_1,\star_2})$. We call $u$ the legal word associated with $g$ and write $u = \legal(g)$. In particular, \begin{equation*}
        GCS_n(x) = C(\legal(I^\alpha(x)|_{[1,n+1]})).
    \end{equation*}
\end{lemma}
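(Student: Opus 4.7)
The plan is to prove the lemma by induction on $n$, constructing $\legal(g)$ from right to left and mirroring the recursive case analysis in the proof of Proposition~\ref{rep}. This works because the legal words of length $n$ are, by that proposition, in bijection with $\mathcal{GCS}_{n-1}$, and each element of $\mathcal{GCS}_{n-1}$ is determined by its (unique) parent in $\mathcal{GCS}_{n-2}$ together with the position of $\alpha$.

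For the base case $n=1$, a word $g\in\set{L,R}$ has length one and $\mathcal{GCS}_0=\set{C(L),C(R)}$, so the only legal words of length one are $L$ and $R$; thus $u=g$ is forced, and aperiodicity of $\alpha$ gives $\widetilde{g}(\set{\star_1,\star_2})\subseteq C(g)$. For the inductive step, set $g'=g|_{[2,n]}$ and apply the induction hypothesis to obtain the unique legal word $u'\in\set{L,R,\star}^{n-1}$ with $C(u')\supseteq\widetilde{g'}(\set{\star_1,\star_2})$. Since $\widetilde{g}(\set{\star_1,\star_2})=\widetilde{g[1]}(\widetilde{g'}(\set{\star_1,\star_2}))\subseteq\widetilde{g[1]}(C(u'))$, I would split into two cases matching the proof of Proposition~\ref{rep}: if $\alpha\notin C(u')$, the set $\widetilde{g[1]}(C(u'))\in\mathcal{GCS}_{n-1}$ is represented by $g[1]u'$, and I take $u:=g[1]u'$; if $\alpha\in C(u')$, then $h^{-1}(C(u'))$ is a single generalized cylinder set represented by $\star u'$, and I take $u:=\star u'$. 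In either case $u$ is obtained from $g$ by replacing some digits by $\star$, is legal, and satisfies $C(u)\supseteq\widetilde{g}(\set{\star_1,\star_2})$. Uniqueness is immediate from the case split, since in the first case $\star u'$ fails to be legal and in the second case both $Lu'$ and $Ru'$ fail to be legal, by the uniqueness portion of Proposition~\ref{rep}.

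For the final identity, put $g=I^\alpha(x)|_{[1,n+1]}$ (so $g\in\set{L,R}^{n+1}$ since $h^{n+1}(x)\neq\alpha$ is needed for $GCS_n(x)$ to be defined) and $u=\legal(g)$, so that $C(u)\in\mathcal{GCS}_n$. For every $i$ with $u[i]\in\set{L,R}$ we have $u[i]=g[i]=I^\alpha(x)[i]$, so $h^{i-1}(x)\in C(u[i])$; by the characterization of $C(u)$ in Proposition~\ref{rep} this gives $x\in C(u)$, and uniqueness of $GCS_n(x)$ then forces $GCS_n(x)=C(u)$. The only subtle step I foresee is checking that the two-branch construction really exhausts all legal candidates of length $n$ obtained from $g$ by starring digits — this is precisely where the bijection between legal words and generalized cylinder sets (i.e.\ the uniqueness in Proposition~\ref{rep}) is essential.
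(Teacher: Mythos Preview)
Your proposal is correct and follows essentially the same approach as the paper. Both arguments construct $\legal(g)$ recursively from right to left, determining whether the leading digit is $\star$ by testing whether $\alpha$ lies in the generalized cylinder set represented by the suffix (the paper phrases this via Corollary~\ref{stars} as a matching condition between $I^\alpha(\alpha)$ and $\legal(g|_{[k+1,|g|]})$, but that is the same test). The paper also includes a one-line direct argument for existence and uniqueness---noting that $\widetilde g(\star_1)\approx_\alpha\widetilde g(\star_2)$ forces both to lie in a single element of $\mathcal{GCS}_{n-1}$, whose representing word from Proposition~\ref{rep} is then $\legal(g)$---whereas you obtain the same conclusion inside the induction; one minor point you leave implicit is that any competing legal word $v$ obtained from $g$ by starring must satisfy $v|_{[2,n]}=\legal(g|_{[2,n]})$ before your case split applies, but this follows immediately from the inductive uniqueness once you observe $h(C(v))\supseteq\widetilde{g|_{[2,n]}}(\{\star_1,\star_2\})$.
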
  

\begin{proof}
Note that by the non-crossing property of $\approx_\alpha$, $\widetilde g(\star_1)$ and $\widetilde g(\star_2)$ belongs to the same generalized cylinder set $C\in \mathcal{GSC}_{n-1}$. The legal word $\legal(g)$ associated with $g$ is then just the word representing $C$ from Proposition~\ref{rep}.

We can construct $\legal(g)$ manually from $g$ (See Figure \ref{fig:legal}). Corollary~\ref{stars} implies that $\legal(g)[k] = \star$ if and only if $I^\alpha(\alpha)|_{[1,|g|-k]}$ matches the legal word $\legal(g|_{[k+1,|g|]})$ except at the free digits. We can start by checking from the last digit of $g$ and looking backwards. If the last digit matches the first digit of $I^\alpha(\alpha)$, we replace the second-to-last digit of $g$ with $\star$ to get $\legal(g|_{[|g|-1,|g|]})$. For the $k$-th step, we check whether the last $k$ digits of modified $g$ ($\legal(g|_{[|g|-k,|g|]})$) matches the first $k$ of $I^\alpha(\alpha)$ except at the free digits. If that is the case, we replace the $(n-k)$-th digit of $g$ with $\star$. It follows from Lemma~\ref{stars} that we will in fact end up with a legal word of $g$.
\end{proof}

\begin{figure}[ht]
    \centering
    \begin{tikzpicture}
        \draw[|-|] (0,0) -- (5,0);
        \node at (-0.5,0) {$g:$};
        \draw[->, line width=2pt] (5.2,0) -- (5.7,0) node [right=1pt] {$\legal(g)\in \set{L,R,\star}^{|g|}$};
        \foreach \x in {1,2.5,3.2,4.3}
        {
            \fill [teal](\x-0.2,0) circle (2pt);
            \draw [color=red, line width=1pt](\x,0) to[in angle=180, curve through = {(2.5+0.5*\x, 2.5-0.5*\x)}] (5,3-0.6*\x);
            \draw [->,line width=1pt,teal] (\x-0.2,-0.4) node [teal, below] {$\star$} -- (\x-0.2,-0.12);
        }
        \draw[dashed, red, line width=1pt] (5,0) -- (5,3);
    \end{tikzpicture}
    \caption{Constructing a legal word. Red arcs indicate where $\legal(g|_{[k,n]})$ matches with $I^\alpha(\alpha)|_{[1,n-k+1]}$ up to non-free digits.}
    \label{fig:legal}
\end{figure}
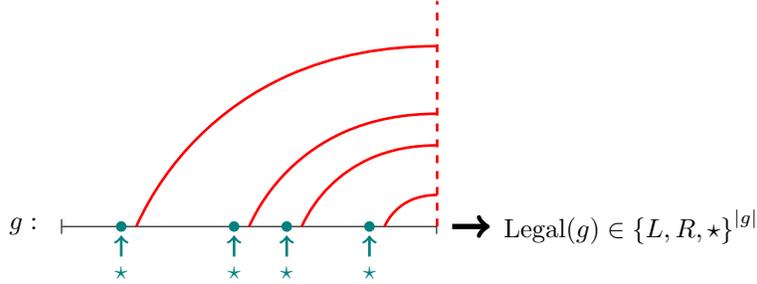

Given $g\in \set{L,R}^n$, if we concatenate $g$ with $h\in \set{L,R}^{k}$, it might happen that $\legal(gh)|_{[1,n]}\neq \legal(g)$. This would be the case only when $\alpha \in C(\legal(h))$ and thus $\legal(gh)[n] = \star$, even though $\legal(g)[n]=g[n]$ will never be $\star$.

\begin{lemma}\label{truncation}
    Given $g\in \set{L,R}^n$ and $h\in\set{L,R}^k$, if $\legal(gh)[n] \neq \star$ or equivalently $\alpha \notin C(\legal(h))$, then $\legal(gh)|_{[1,n]} =  \legal(g)$.
\end{lemma}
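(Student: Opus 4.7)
The plan is to proceed by downward induction on the position index, using the right-to-left construction of $\legal$ from Lemma~\ref{legal} together with the characterization of free digits in Corollary~\ref{stars}. The first observation is that the rightmost $k$ positions of $\legal(gh)$ are determined solely by the suffix $h$ and hence coincide with $\legal(h)$; this immediately justifies the equivalence stated in the hypothesis, since by Corollary~\ref{stars} applied to $\legal(gh)$, $\legal(gh)[n] = \star$ iff $\alpha \in C(\legal(gh)|_{[n+1,n+k]}) = C(\legal(h))$.

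Assuming $\alpha \notin C(\legal(h))$, the base case of the downward induction is position $n$: both $\legal(g)[n]$ and $\legal(gh)[n]$ equal $g[n]$---the former because the last letter of a legal word is never a star, and the latter because the hypothesis rules out star creation at that position. For the inductive step at position $i<n$, I would use the inductive assumption that positions $i+1,\dots,n$ match in order to write
\begin{equation*}
\legal(gh)|_{[i+1,n+k]} = \legal(g)|_{[i+1,n]}\cdot \legal(h),
\end{equation*}
and then apply Corollary~\ref{stars} at position $i$ of each legal word. This reduces the matching at position $i$ to comparing the two conditions $\alpha \in C(\legal(g)|_{[i+1,n]})$ and $\alpha \in C(\legal(g)|_{[i+1,n]}\cdot \legal(h))$.

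The heart of the argument is to show that these two conditions are equivalent. One direction, $\alpha \in C(v\cdot \legal(h)) \Rightarrow \alpha \in C(v)$, is immediate from containment of generalized cylinder sets. The reverse direction is the main obstacle: starting from $\alpha \in C(\legal(g)|_{[i+1,n]})$, one must establish the extra constraint $h^{n-i}(\alpha) \in C(\legal(h))$. I would attempt to exploit the hypothesis $\alpha \notin C(\legal(h))$ together with the structural constraints that the star pattern of $\legal(g)$ already imposes on the itinerary of $\alpha$, unwinding the recursive characterization of legality to argue that any obstruction to the desired inclusion would already have appeared in an earlier step of the right-to-left construction of $\legal(g)$, contradicting the fact that $\legal(g)[i]$ was indeed a star. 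If this direct combinatorial unwinding proves delicate, the geometric viewpoint of $\mathcal{GCS}_n$ as regions bounded by the hyperbolic geodesics $l_u$ provides an alternative route: one checks that the region in $\mathcal{GCS}_{n+k-1}$ containing $\widetilde{gh}(\{\star_1,\star_2\})$ sits inside the region of $\mathcal{GCS}_{n-1}$ containing $\widetilde{g}(\{\star_1,\star_2\})$, which, combined with the non-crossing property (Lemma~\ref{non-crossing}) and the fact that $\alpha\notin C(\legal(h))$ prevents any degenerate configuration, forces the first $n$ letters of $\legal(gh)$ to describe the same region as $\legal(g)$.
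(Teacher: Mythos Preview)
The paper states this lemma without proof. Your inductive setup is sound and you correctly isolate the crux as the equivalence $\alpha \in C(\legal(g)|_{[i+1,n]}) \Leftrightarrow \alpha \in C(\legal(g)|_{[i+1,n]}\cdot\legal(h))$, with the forward implication being the obstacle. The problem is that this forward implication is \emph{false}, and so is the lemma as literally worded. Take any (non--pre-periodic) $\alpha$ with $I^\alpha(\alpha)[1]=I^\alpha(\alpha)[2]=L$, for instance $\alpha=e^{2\pi i\theta}$ with irrational $\theta\in(0,1/3)$, and set $g=LL$, $h=R$. Then $\legal(h)=R$ and $\alpha\notin C(R)$, so the hypothesis holds; but $\legal(g)=\star L$ (since $\alpha\in C(L)$) while $\legal(gh)=LLR$ (since $\alpha\notin C(R)$ and $\alpha\notin C(LR)$), giving $\legal(gh)|_{[1,2]}=LL\neq\star L$. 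At $i=1$ one has $\alpha\in C(L)$ yet $h(\alpha)\notin C(R)$, so no ``unwinding of the recursive characterization'' can manufacture the missing constraint. Your geometric fallback has the same defect: the partitions $\mathcal{GCS}_{n-1}$ and $\mathcal{GCS}_{n+k-1}$ are cut by disjoint families of leaves and their cells need not be nested.

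What the two applications of this lemma (both in the proof of Lemma~\ref{preperiodic}) actually require is only the containment $C(\legal(gh))\subseteq C(\legal(g))$, equivalently that every free digit of $\legal(gh)|_{[1,n]}$ is already a free digit of $\legal(g)$. This one-sided statement \emph{does} follow from your downward induction using only the easy direction: assuming inductively that each $\star$ of $\legal(gh)$ in $(j,n]$ is a $\star$ of $\legal(g)$, the condition $\alpha\in C(\legal(gh)|_{[j+1,n+k]})$ forces $I^\alpha(\alpha)[l-j]=g[l]$ at every $l\in(j,n]$ with $\legal(gh)[l]\neq\star$, hence in particular at every $l$ with $\legal(g)[l]\neq\star$, which is exactly $\alpha\in C(\legal(g)|_{[j+1,n]})$ and gives $\legal(g)[j]=\star$. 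The hypothesis $\legal(gh)[n]\neq\star$ is precisely the base case. So your plan proves the statement the paper actually uses; it just cannot prove the equality asserted in the lemma, because that equality fails.
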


\subsection{Cover of $\T$ by Generalized Cylinder Sets}
From now on, we only consider $\alpha$ that are not pre-periodic. That is, none of $h^n(\alpha)$ is periodic. Again we are only excluding a countable set of points. In this case, we have \begin{enumerate}
    \item The itinerary of the angle $I(\alpha):= I^\alpha(\alpha)$ does not have letter $\star$ and thus belongs to $\set{L,R}^\infty$;
    \item For any $x\in\T$, there is at most one $\star$ in $I^\alpha(x)$;
\end{enumerate}

\begin{lemma}[{\cite[Proposition 2.10]{lin2018quasisymmetry}}]
    We denote the infinite word where we replace this $\star$ in $I^\alpha(x)$ with $L$ by $I^L(x)$ and define $I^R(x)$ similarly (If $I^\alpha(x)$ does not contain $\star$, then $I^L(x), I^R(x)$ and $I^\alpha(x)$ are just the same). If $\alpha$ is not preperiodic, we have \begin{equation*}
        [x]_{\approx_\alpha} = \bigcap_{n=1}^\infty C(I^L(x)|_{[1,n]}) = \bigcap_{n=1}^\infty C(I^R(x)|_{[1,n]}).
    \end{equation*}
\end{lemma}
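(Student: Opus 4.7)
The plan is to prove $[x]_{\approx_\alpha} \subseteq \bigcap_n C(I^L(x)|_{[1,n]}) \subseteq [x]_{\approx_\alpha}$; the analogous identity for $I^R$ then follows by a symmetric argument. First note that, since $\alpha$ is not preperiodic, $I^\alpha(x)$ contains at most one $\star$: the condition $I^\alpha(x)[j]=\star$ forces $h^j(x)=\alpha$, and non-preperiodicity rules out two distinct such $j$. Denote this digit by $k$, if it exists. The containment $\bigcap_n C(I^L(x)|_{[1,n]}) \subseteq [x]_{\approx_\alpha}$ is then immediate from unwinding the definitions: any $y$ in the intersection satisfies $I^\alpha(y)[i] \in \{I^L(x)[i], \star\}$ for every $i$, which at each digit $i \neq k$ (where $I^L(x)[i]=I^\alpha(x)[i]$) is exactly the condition defining $y \approx_\alpha x$, while at $i=k$ the $\approx_\alpha$-condition is vacuous because $I^\alpha(x)[k]=\star$.

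For the reverse inclusion, let $y \approx_\alpha x$. At every digit $i \neq k$, the required $I^\alpha(y)[i] \in \{I^L(x)[i], \star\}$ follows directly from the definition of $\approx_\alpha$. The only delicate case is $i=k$, where $I^L(x)[k] = L$: one must rule out $I^\alpha(y)[k] = R$, or equivalently show $h^{k-1}(y) \in \{\star_1, \star_2\}$. Forward invariance of $\approx_\alpha$ gives $h^k(y) \approx_\alpha h^k(x) = \alpha$, so the whole reverse inclusion reduces to the sub-claim that $[\alpha]_{\approx_\alpha} = \{\alpha\}$ whenever $\alpha$ is not preperiodic.

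To prove the sub-claim, take $z \approx_\alpha \alpha$. Non-preperiodicity of $\alpha$ first forces $I^\alpha(\alpha)$ to be $\star$-free, and then forces $I^\alpha(z)$ to be $\star$-free as well: if instead $I^\alpha(z)[j]=\star$ then $h^j(z)=\alpha$, and for every $i>j$ the matching condition $I^\alpha(z)[i]\in\{I^\alpha(\alpha)[i],\star\}$ becomes $I^\alpha(\alpha)[i-j]=I^\alpha(\alpha)[i]$, making $I^\alpha(\alpha)$ and hence $\alpha$ periodic, a contradiction. Therefore $I^\alpha(z)=I^\alpha(\alpha)$, so both $z$ and $\alpha$ lie in each nested cylinder $C(I^\alpha(\alpha)|_{[1,n]})$, which is an arc of length $2\pi d^{-n}\to 0$; hence $z=\alpha$. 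The hard part of the whole argument is precisely this sub-claim, which captures the fact that without non-preperiodicity the critical value $\alpha$ could share its equivalence class with other points, allowing the $I^L$ and $I^R$ intersections to properly contain $[x]_{\approx_\alpha}$.
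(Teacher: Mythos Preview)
The paper does not give its own proof of this lemma; it is quoted from \cite{lin2018quasisymmetry}, so there is no in-paper argument to compare against.

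Your proof contains a genuine error in the sub-claim that $[\alpha]_{\approx_\alpha}=\{\alpha\}$ for every non-preperiodic $\alpha$. This is false: the very next lemma in the paper (quoting \cite{bandt2006symbolic}) says $[\alpha]_{\approx_\alpha}$ has \emph{at most} two points, and the remainder of Section~\ref{GCS_sec} treats the two-point case at length (the gluing link $C_*$, Figure~\ref{fig:chain_link}, etc.). The mistake is in your last step, ``$C(I^\alpha(\alpha)|_{[1,n]})$ is an arc of length $2\pi d^{-n}\to 0$.'' The total measure is indeed $2\pi d^{-n}$, but a cylinder set $C(w)$ is in general a finite disjoint union of closed arcs, not a single arc: each time $\alpha$ lies in the intermediate cylinder $C(w|_{[i+1,|w|]})$, the pullback by $\widetilde{w[i]}$ breaks into two pieces (compare Proposition~\ref{gluing_links}). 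Hence $\bigcap_n C(I^\alpha(\alpha)|_{[1,n]})$, while of measure zero, can contain two distinct points, and when $|[\alpha]_{\approx_\alpha}|=2$ it does.

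Your reduction is in fact clean enough to expose that the stated equality cannot hold verbatim at every $x$ once $|[\alpha]_{\approx_\alpha}|=2$: taking $x=\star_1$ (so $k=1$) and writing $[\alpha]_{\approx_\alpha}=\{\alpha,\beta\}$, one has $\widetilde R(\beta)\in[\star_1]_{\approx_\alpha}$ but $\widetilde R(\beta)\notin C(L)=C(I^L(\star_1)|_{[1,1]})$. So either the cited source carries an extra hypothesis or the formulation here is slightly loose; in any event the sub-claim is unavailable, and the reverse inclusion needs a different argument (or a corrected statement).
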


\begin{lemma}[{\cite[Theorem~7]{bandt2006symbolic}}]
    Suppose $\alpha$ is not pre-periodic. Then $[\alpha]_{\approx_\alpha}$ contains at most $2$ points and thus $[\star_i]_{\approx_\alpha}$ has either $2$ or $4$ points. Moreover, in the case when $|[\alpha]_{\approx_\alpha}|=2$ if we denote the other point of $[\alpha]_{\approx_\alpha}$ by $\beta$, then the forward orbit $\set{h^i(\alpha)}_{i=1}^\infty$ does not intersect the arcs $\alpha\frown\beta$, $\widetilde L(\alpha\frown\beta)$ or $\widetilde R(\alpha\frown\beta)$. 
\end{lemma}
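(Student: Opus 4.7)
The plan is to prove the three assertions directly from the itinerary description of $\approx_\alpha$. For the first bound $|[\alpha]_{\approx_\alpha}|\leq 2$, I would in fact show the stronger $[\alpha]_{\approx_\alpha}=\set{\alpha}$ in our non-pre-periodic setting. Aperiodicity forces $I^\alpha(\alpha)\in\set{L,R}^\infty$: a $\star$ at position $n$ would give $\alpha^{2^{n-1}}\in\set{\star_1,\star_2}$, hence $\alpha^{2^n}=\alpha$. For $y\approx_\alpha\alpha$, the definition requires $I^\alpha(y)$ to agree with $I^\alpha(\alpha)$ at every digit where $y$'s letter is not $\star$; and two $\star$'s in $I^\alpha(y)$ at positions $m<n$ would yield $y^{2^m}=y^{2^n}=\alpha$, so $\alpha^{2^{n-m}}=\alpha$. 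If $I^\alpha(y)$ has no $\star$, then $I^\alpha(y)=I^\alpha(\alpha)$, and injectivity of the coding on $\set{L,R}^\infty$ (cylinder sets shrink to points) yields $y=\alpha$. If $I^\alpha(y)$ has a unique $\star$ at position $m\geq 1$, then $y^{2^m}=\alpha$, and for $k>m$ the tail $I^\alpha(y)[k]=I^\alpha(\alpha)[k-m]$ must also equal $I^\alpha(\alpha)[k]$; this forces $I^\alpha(\alpha)$ to be $m$-periodic, which (by the same injectivity applied to $I^\alpha(\alpha^{2^m})=I^\alpha(\alpha)$) gives $\alpha^{2^m}=\alpha$, contradicting aperiodicity.

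For the second assertion, forward-invariance of $\approx_\alpha$ together with $\star_1\approx_\alpha\star_2$ (both itineraries begin with $\star$) gives $[\star_1]_{\approx_\alpha}=[\star_2]_{\approx_\alpha}=h^{-1}([\alpha]_{\approx_\alpha})$. Since $\alpha$ has preimages $\set{\star_1,\star_2}$ and any other $\beta\in[\alpha]$ has two preimages $\widetilde L(\beta),\widetilde R(\beta)$, we conclude $|[\star_i]|=2|[\alpha]|\in\set{2,4}$.

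In our non-pre-periodic setting the third assertion is vacuous, since the first step yields $|[\alpha]_{\approx_\alpha}|=1$. For completeness---as in Bandt's proof of the general combinatorial statement---one would argue: if $[\alpha]=\set{\alpha,\beta}$ and $h^i(\alpha)\in\alpha\frown\beta$ for some $i\geq 1$, then forward-invariance gives $h^i(\beta)\approx_\alpha h^i(\alpha)$; a case analysis on the position of $h^i(\beta)$, combined with Lemma~\ref{non-crossing} applied to the chord $\alpha\beta$ and the chord $h^i(\alpha)h^i(\beta)$, eventually forces an identification of the form $h^j(\alpha)\approx_\alpha\alpha$ or $\alpha\in[h^j(\alpha)]$ for some $j\geq 1$---contradicting that $\alpha$ is not pre-periodic. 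The cases $\widetilde L(\alpha\frown\beta)$ and $\widetilde R(\alpha\frown\beta)$ reduce to the previous one after a single application of $h$.

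The only genuine obstacle is the third assertion in its full generality, requiring the delicate combinatorial bookkeeping of Bandt's Theorem~7 to track how leaves and gap-polygons interleave under iteration; under our non-pre-periodic hypothesis, however, this step is simply vacuous, and the first two assertions follow from a clean itinerary calculation.
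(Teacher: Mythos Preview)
Your proposal contains a genuine error. You claim that under the non-pre-periodic hypothesis one actually has $[\alpha]_{\approx_\alpha}=\{\alpha\}$, and you deduce this from ``injectivity of the coding on $\{L,R\}^\infty$ (cylinder sets shrink to points)''. Both the claim and its justification are false. The nested intersection $\bigcap_n C(I^\alpha(\alpha)|_{[1,n]})$ equals the equivalence class $[\alpha]_{\approx_\alpha}$---this is precisely the content of the preceding lemma cited from \cite{lin2018quasisymmetry}---not a single point. Two distinct points sharing the same star-free itinerary are merely $\approx_\alpha$-equivalent, not equal; your appeal to injectivity assumes what is to be proved. The case $|[\alpha]_{\approx_\alpha}|=2$ genuinely occurs for non-pre-periodic $\alpha$, and the paper relies on it explicitly: the gluing link $C_*$ in Lemma~\ref{well_inside} and the dichotomy $|[\star_1]|\in\{2,4\}$ in Lemma~\ref{periodic} both treat this case as live. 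Consequently your assertion that the third clause is vacuous is wrong, and you have not established it.

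Your Case~2 argument (itinerary of $y$ with a single $\star$) also invokes the spurious injectivity to pass from $m$-periodicity of $I^\alpha(\alpha)$ to $h^m(\alpha)=\alpha$. This step can be repaired: periodicity of the itinerary only gives $h^{km}(\alpha)\in[\alpha]$ for all $k\geq 0$, but combined with finiteness of equivalence classes (Lemma~\ref{equi_lami}, valid since non-pre-periodic implies aperiodic) this forces a repetition in the orbit, hence pre-periodicity. Even after this repair, however, Case~1 leaves you with $[\alpha]=\{y:I^\alpha(y)=I^\alpha(\alpha)\}$, and you have offered no argument that this set has at most two elements. That bound---together with the orbit-avoidance statement for $\alpha\frown\beta$ when $|[\alpha]|=2$---is the actual substance of \cite[Theorem~7]{bandt2006symbolic}. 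The paper does not reprove it; it imports the result as a black box, so there is no in-paper proof to compare against.
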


In the case when $[\alpha]_{\approx_\alpha}=\set{\alpha,\beta}$, we let $\star_1\frown b_1$ and $\star_2\frown b_2$ denote the two arcs of $\T \backslash [\star_1]_{\approx_\alpha}$ that are not $\widetilde L(\alpha\frown \beta)$ or $\widetilde R(\alpha\frown \beta)$. Let $C_*$ be the gluing link containing $[\star_i]_{\approx_\alpha}$ made of $\overline{\widetilde L(\alpha\frown \beta)}$ and $\overline{\widetilde R (\alpha\frown \beta)}$ (see Figure~\ref{fig:chain_link}). In the case, where $[\alpha]_{\approx_\alpha} = \set{\alpha}$, we define $C_* = \set{\star_1,\star_2}$. Intuitively, we want to think of $C_*$, as well as its images \begin{equation*}
    \set{\widetilde{u}(C_*)|u\in\set{L,R}^*},
\end{equation*} as "bad" sets that we want to stay away from.

\begin{figure}[ht]
    \centering
    \begin{tikzpicture}[scale=0.75]
        \draw [dashed, line width=1pt, red, opacity=0.4] ({5*cos(225)},{5*sin(225)}) -- ({0.5*cos(225)},{0.5*sin(225)});
        \draw [dashed, line width=1pt, red, opacity=0.4] ({0.5*cos(45)},{0.5*sin(45)}) -- ({5*cos(45)},{5*sin(45)});
    
        \fill[blue, opacity=0.4] \arch{0}{0}{5}{225-360}{30} -- \arcc{0}{0}{5}{30}{45} -- \arch{0}{0}{5}{45}{210} --\arcc{0}{0}{5}{210}{225}-- cycle;
        \draw [line width=1pt] (0,0) circle (5);
        \draw [line width=5pt, draw opacity=0.8, teal] ({5*cos(60)},{5*sin(60)}) arc(60:90:5)
        node[pos=0.45,above]{$\alpha\frown\beta$};
        \fill [orange] ({5*cos(60)},{5*sin(60)}) circle (3pt) node [above right=1pt] {$\beta$};
        \fill [red] ({5*cos(90)},{5*sin(90)}) circle (3pt) node [above left=0.6pt] {$\alpha$};
        
        \draw [line width=5pt, draw opacity=0.8, teal] ({5*cos(30)},{5*sin(30)}) arc(30:45:5)
        node[pos=0.3,above right]{$\widetilde R(\alpha\frown\beta)$};
        \fill [orange] ({5*cos(30)},{5*sin(30)}) circle (3pt) node [right=1.2pt] {$\beta/2$};
        \fill [red] ({5*cos(45)},{5*sin(45)}) circle (3pt) node [above=0.6pt] {$\star_1$};
        
        \draw [line width=5pt, draw opacity=0.8, teal] ({5*cos(210)},{5*sin(210)}) arc(210:225:5)
        node[pos=0.3,below left]{$\widetilde L(\alpha\frown\beta)$};
        \fill [orange] ({5*cos(210)},{5*sin(210)}) circle (3pt) node [left=2pt] {$(\beta+1)/2$};
        \fill [red] ({5*cos(225)},{5*sin(225)}) circle (3pt) node [below=1pt] {$\star_2$};

        \node[black] at (0,0) {\Large $C_*$};
    
        \node[red] at (-2,2) {\huge $C(L)$};
        \node[red] at (2,-2) {\huge $C(R)$};
    \end{tikzpicture}
    \caption{When $|[\star_1]_{\approx_\alpha}|=4$, we have the above gluing link $C_*$. In the case when $\beta \in (0,\alpha)$, we have that $\star_1\frown b_1\subseteq C(L)$ and $\star_2\frown b_2 \subseteq C(R)$.}
    \label{fig:chain_link}
\end{figure}
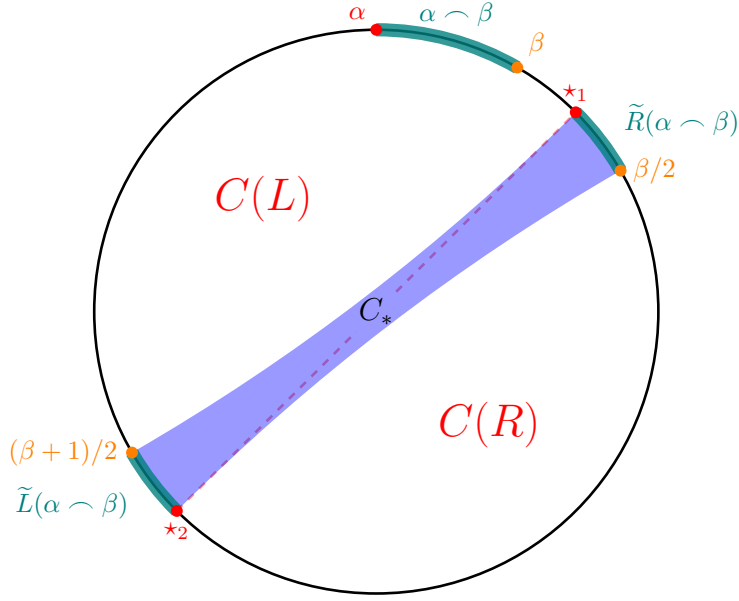

\begin{lemma}\label{well_inside}
    Given an integer $K>0$, there exists $\delta>0$ such that the following holds: for each $x\in T$, there exists at least one generalized cylinder set $C$ in $\mathcal{GCS}_K\cup \mathcal{GCS}_{K+1}$ such that $x\in C$ and for any $\widetilde g(\set{\star_1,\star_2}) \subseteq \p C$, we have \begin{equation*}
	\text{dist}(x, \widetilde g (C_*)) >\delta.
    \end{equation*}
    We denote this gluing link $C$ by $C^K(x)$. 
\end{lemma}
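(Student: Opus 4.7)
My plan is to prove this lemma by a compactness argument combined with a pointwise analysis. Suppose the conclusion fails; then for each $n\geq 1$ there is $x_n\in\T$ such that every $C\in\mathcal{GCS}_K\cup\mathcal{GCS}_{K+1}$ containing $x_n$ admits a boundary pair $\widetilde g_n(\{\star_1,\star_2\})\subseteq\partial C$ with $\text{dist}(x_n,\widetilde g_n(C_*))\leq 1/n$. Since $\mathcal{GCS}_K\cup\mathcal{GCS}_{K+1}$ is a finite collection and each member has only finitely many boundary pairs, after passing to a subsequence I may stabilize, for each $C$ containing all sufficiently large $x_n$, a single boundary word $g^C$. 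By compactness of $\T$ I extract a limit $x_n\to x_\infty\in\T$, and by closedness of each $\widetilde{g^C}(C_*)$ I obtain $x_\infty\in\widetilde{g^C}(C_*)$ for every $C\in\mathcal{GCS}_K\cup\mathcal{GCS}_{K+1}$ containing $x_\infty$.

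Unfolding $\widetilde g(C_*)=\overline{\widetilde{gL}(\alpha\frown\beta)}\cup\overline{\widetilde{gR}(\alpha\frown\beta)}$, the membership $x_\infty\in\widetilde{g^C}(C_*)$ means $x_\infty\in C(g^C)$ and $h^{|g^C|+1}(x_\infty)\in\overline{\alpha\frown\beta}$. Since $x_\infty$ lies in at least one cylinder set from $\mathcal{GCS}_K$ and one from $\mathcal{GCS}_{K+1}$, I obtain two distinct orbit times $j_1\neq j_2$ in $[1,K+2]$ with $h^{j_1}(x_\infty),h^{j_2}(x_\infty)\in\overline{\alpha\frown\beta}$. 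Setting $y=h^{\min(j_1,j_2)}(x_\infty)$ and $j=|j_1-j_2|$, I then have $y\in\overline{\alpha\frown\beta}$ and $h^j(y)\in\overline{\alpha\frown\beta}$. I plan to show this is impossible except on a finite exceptional set by exploiting the property just recalled: the forward orbit $\{h^i(\alpha)\}_{i\geq 1}$ avoids the open arcs $\alpha\frown\beta$, $\widetilde L(\alpha\frown\beta)$, and $\widetilde R(\alpha\frown\beta)$. Combined with the aperiodicity of $\alpha$ and the fact that $[\alpha]_{\approx_\alpha}$ has at most two elements, an analysis of $\overline{\alpha\frown\beta}\cap h^{-j}(\overline{\alpha\frown\beta})$ restricts $y$ to a finite set, and pulling back through $h^{-\min(j_1,j_2)}$ restricts $x_\infty$ to the finite set $\{\widetilde u(\star_1),\widetilde u(\star_2):|u|\leq K+1\}$ of chord endpoints. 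At each such exceptional $x_\infty$, Proposition~\ref{rep} and Lemma~\ref{legal} describe explicitly the cylinder sets containing $x_\infty$ (necessarily several, since $x_\infty$ sits on a chord) and their boundary pairs; a direct case analysis exhibits at least one $C\in\mathcal{GCS}_K\cup\mathcal{GCS}_{K+1}$ with $x_\infty\in C$ for which no boundary pair $\widetilde g$ satisfies $x_\infty\in\widetilde g(C_*)$. By continuity of $x\mapsto\text{dist}(x,\widetilde g(C_*))$ on the $C$-side of any chord through $x_\infty$, this $C$ still works for $x_n$ with $n$ large, producing the required contradiction.

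The main obstacle will be the reduction step — showing that the simultaneous landing of two iterates of $x_\infty$ in $\overline{\alpha\frown\beta}$ forces $x_\infty$ onto the finite set of chord endpoints. This amounts to controlling $\overline{\alpha\frown\beta}\cap h^{-j}(\overline{\alpha\frown\beta})$ for each $j\in[1,K+1]$, using the aperiodicity of $\alpha$ and the orbit-avoidance property. Once this reduction is in place, the remaining case analysis at the finitely many exceptional chord endpoints, as well as the compactness and closed-set manipulations upgrading pointwise positivity to a uniform $\delta>0$, are routine given the finite combinatorics of $\mathcal{GCS}_K\cup\mathcal{GCS}_{K+1}$.
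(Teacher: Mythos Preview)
Your approach is workable but considerably more roundabout than the paper's, and the step you defer is exactly where the substance lies. The paper's proof is direct: it shows that the finitely many ``bad sets'' $\widetilde g(C_*)$ with $|g|\in\{K,K+1\}$ are pairwise disjoint. By the non-crossing property of $\approx_\alpha$, any two such gluing links are either disjoint or nested, and since each arc of $\widetilde g(C_*)$ has length exactly $\ell/2^{|g|}$, nesting is impossible when the word lengths differ by at most one. Finiteness then yields a uniform separation $2\delta$, so every $x$ lies within $\delta$ of at most one bad set; choosing the level ($K$ or $K{+}1$) whose boundary words avoid that single nearby bad set gives $C^K(x)$.

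In your reduction the conclusion $y,h(y)\in\overline{\alpha\frown\beta}$ is correct (and $j=1$ always, since the two boundary-word lengths are $K$ and $K{+}1$), but the claim that orbit-avoidance then restricts $y$ to a finite set is the gap. The fact that $\{h^i(\alpha)\}_{i\ge1}$ avoids $\alpha\frown\beta$ and its two preimage arcs constrains the orbit of $\alpha$, not an arbitrary $y$; nothing you cite rules out a whole sub-arc of $\alpha\frown\beta$ mapping back into $\alpha\frown\beta$ under $h$. What is actually needed is $\overline{\alpha\frown\beta}\cap C_*=\emptyset$, which is true but requires the non-crossing argument between the leaf $\alpha\sim\beta$ and the leaves of $[\star_1]_{\approx_\alpha}$, together with $\alpha,\beta\notin C_*$. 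Once that is supplied your exceptional set is empty and the promised case analysis evaporates---but at that point you have recovered the paper's key observation, and applying it directly to all pairs of bad sets is cleaner than the detour through compactness.
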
 

\begin{proof}
Consider first the case when $[\alpha]_{\approx_\alpha} = \set{\alpha,\beta}$. We observe that since $\alpha \frown \beta$ does not contain any $h^i(\alpha)$, any $\widetilde g$ is well-defined on $\alpha \frown \beta$ and contracts this arc. Thus, if $|\star_1\frown b_2|= |\star_2\frown b_1| = \ell$. Then $\widetilde g (C_*)$ is a gluing link made of two arcs $\overline{\widetilde g(\star_1\frown b_2)}$ and $\overline{\widetilde g(\star_2\frown b_1)}$ with length $\ell/2^{|g|}$. 

By the non-crossing property of $\approx_\alpha$, for any two distinct $g_1,g_2 \in \set{L,R}^*$, either $\widetilde g_1 (C_*)$ and $\widetilde g_2 (C_*)$ are  disjoint or one contains the other. When $-1\leq|g_1| - |g_2| \leq 1$, the containment is impossible and thus $\widetilde g_1 (C_*)$ and $\widetilde g_2 (C_*)$ have to be disjoint. Therefore, given an integer $K>1$, there exists $\delta>0$ such that \begin{equation*}
    \text{dist}(\widetilde g_1 (C_*),\widetilde g_2(C_*)) >2\delta,\quad \text{for any distinct $g_1,g_2$ in $\set{L,R}^{K-1}\cup\set{L,R}^{K}$}.
\end{equation*} Note that $\mathcal{GCS}_K\cup \mathcal{GCS}_{K+1}$ is a finite collection of gluing links whose boundaries are $\widetilde g\set{\star_1,\star_2}$'s with $|g|\in \set{K-1,K}$, the result follows. 
In the case where  $[\alpha]_{\approx_\alpha}=\set{\alpha}$, we can get an analogous result by replacing $C_*$ above with $\set{\star_1,\star_2}$.
\end{proof}

In summary, given an integer $K$, $\set{C^K(x)}$ is a finite closed covering of $\T$ by generalized cylinder sets where each $x$ is uniformly far away from the bad sets $\widetilde{g}(C_*)$ at the boundary of $C^K(x)$. This is the covering of gluing links that we will use to prove Theorem~\ref{main_theo}.

\section{Proof of Theorem~\ref{main_theo}} \label{theo_sec}
\subsection{When $\set{C^K(x)}$ Satisfies Nice Gluing Property}
In this section, we will show that when $d=2$, if $\alpha$ is not strongly recurrent, then each $C^K(x)$ contains a nice gluing circuit around $x$.

\begin{proposition}~\label{good_scale}
    If $\alpha$ is not strongly recurrent as in Definition~\ref{SR}, then for any integer $K$, there exist $(N, C, r)$ depending only on $K$ such that for any $x\in\T$, there is an $(N,C,r)$-nice gluing circuit around $x$ in $C^K(x)$.
\end{proposition}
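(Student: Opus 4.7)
My plan is to build the required nice gluing circuit explicitly around $x$, anchored at the boundary hinges of $C^K(x)$.

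First, I would enumerate the hinges of $C^K(x)$. Since $C^K(x)$ lies in $\mathcal{GCS}_K \cup \mathcal{GCS}_{K+1}$, its boundary consists of at most $O(2^K)$ glued pairs of the form $\widetilde{g_i}(C_*)$ with $|g_i| \leq K$, and by Lemma~\ref{well_inside} these all lie at distance at least $\delta(K) > 0$ from $x$. The skeleton of the circuit is laid out by placing, on each side of each hinge and just inside $C^K(x)$, a small arc of diameter $\asymp r(K, \delta)$; labelling these counterclockwise as $A_1, A'_1, \ldots, A_n, A'_n$ yields a cyclic sequence surrounding $x$, with $x$ falling in the gap between one non-glued consecutive pair $(A_i, A'_i)$. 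This bounds $N$ by a function of $K$ alone.

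The next step is to equip each $A_j$ with a probability measure $\mu_j$ of bounded rescaled logarithmic energy and to pair the arcs $A_j$ and $A'_{j+1}$ straddling a common hinge by a measure-preserving $\approx_\alpha$-respecting bijection. I would construct $\mu_j$ as a self-similar measure supported on the iterated $h$-preimages of that hinge lying inside $A_j$. These preimages inherit the hinge's gluing structure and decompose symmetrically into two halves in canonical $\approx_\alpha$-bijection across the hinge, so that the natural identification between them furnishes the candidate map $\phi_j$.

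The main obstacle is to bound the rescaled logarithmic energies $\mathcal{E}(\hat{\mu}_j)$ uniformly in $x$. This is where the non-strong-recurrence of $\alpha$ enters: by assumption there exist $(D_0, \tau_0)$ with $\tau_0 < 1$ such that no $D_0$-rare set can witness more than $\tau_0 n$ duplicating digits among the first $n$ digits of $I^\alpha(\alpha)$. I plan to translate this combinatorial non-coincidence into a uniform lower bound on the geometric spacing between successive $h$-preimages of the hinges of $C^K(x)$, in the spirit of Smirnov's argument in \cite{smirnov2000symbolic}. Such a spacing bound would force the Cantor-like supports of the $\mu_j$ to be uniformly perfect with constants depending only on $K$ and on $(D_0, \tau_0)$, from which standard estimates for uniformly perfect sets yield the desired bound on $\mathcal{E}(\hat{\mu}_j)$. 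The parameters $N$, $C$, and $r$ then depend only on $K$ and on the non-SR constants attached to $\alpha$, completing the proof.
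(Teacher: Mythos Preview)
Your overall architecture matches the paper's: enumerate the boundary hinges $\widetilde{g_n}(\{\star_1,\star_2\})$ of $C^K(x)$, place a glued pair of arcs $(A_n,A'_n)$ just inside each hinge, and use Lemma~\ref{well_inside} to conclude that $x$ falls in one of the gaps. The finiteness of $\{C^K(x)\}$ then uniformizes $(N,C,r)$.

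The gap is in how you propose to build the measures $\mu_j$ and to extract the energy bound from non-SR. Your suggestion to support $\mu_j$ on ``iterated $h$-preimages of the hinge lying inside $A_j$'' does not work as stated: the $h$-preimages of a hinge pair spread over the whole circle and do not accumulate near the hinge, so there is no self-similar set to speak of inside $A_j$. What one actually needs are \emph{forward-contracting} maps that send a neighborhood of the hinge into itself. The paper obtains these via Lemma~\ref{periodic}: non-SR gives, for any $D$, a sequence $m_{i_k}$ along which the duplicating intervals of $I^\alpha(\alpha)$ covering $m_{i_k}$ have length at most $D$. By Lemma~\ref{duplicating} this means the boundary leaves $l_{g_k}$ of the cylinder $C(LI^\alpha|_{[1,m_{i_k}]})$ are all short and the maps $\widetilde{g_k}$ are ill-defined only on the finite set $\{h^i(\alpha)\}_{i=0}^{3PD}$. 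Hence for large $k$ the $\widetilde{g_k}$ are continuous contractions on fixed arcs $\overline{\Gamma_1},\overline{\Gamma_2}$ flanking the hinge, producing periodic fixed points $x_n\approx_\alpha y_n$ converging to the hinge. Two such maps $\widetilde{g_k}^2,\widetilde{g_j}^2$ with distinct fixed points form a genuine IFS on each $\Gamma_i$, and the resulting Cantor pair carries the glued measures of finite energy. One then pushes all of this forward by $\widetilde{g_n}$ to the hinge of $C^K(x)$.

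In short, the passage from non-SR to ``uniform spacing of preimages'' that you leave as a black box is precisely the content of Lemma~\ref{periodic}, and the self-similar measure does not live on $h$-preimages but on the attractor of an IFS of specific inverse branches $\widetilde{g_k}$ whose existence and domain control come from the bounded-duplicating-interval consequence of non-SR.
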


We will need the following lemma inspired by Lemma~4.2 in \cite{lin2018quasisymmetry}.

\begin{lemma}\label{periodic}
    Suppose that $\alpha$ is not strongly recurrent. Then we have
    \begin{enumerate}
        \item If $|[\star_1]_{\approx_\alpha}| =2$, then we can find two sequence of periodic points $\set{x_n}$ and $\set{y_n}$ in $C(L)$ such that \begin{equation*}
            x_n \approx_\alpha y_n,\quad x_n \to \star_1,\quad y_n\to \star_2.
        \end{equation*}
        \item If $|[\star_1]_{\approx_\alpha}| = 4$, then we can find two sequence of periodic points $\set{x_n}$ and $\set{y_n}$ in $C(L) \backslash C_*$ such that \begin{equation*}
            x_n \approx_\alpha y_n,\quad x_n \to \star_1,\quad y_n\to b_1.
        \end{equation*}
    \end{enumerate}

    Moreover, for each $n$ and $m$, there exist two measures $\mu$ and $\mu'$ of finite logarithmic energy such that \begin{equation*}
        \supp \mu \subseteq x_n\frown x_m,\quad \supp \mu' \subseteq y_n\frown y_m,
    \end{equation*} and there exists $\phi:\supp \mu\to \supp \mu'$ such that $\mu' = \phi \# \mu$ and $x\approx_\alpha \phi(x)$.
\end{lemma}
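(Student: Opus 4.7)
The plan is to harvest the failure of strong recurrence into an infinite sequence of ``escape times'' of the orbit of $\alpha$ from its initial cylinder, then use each escape time to manufacture a finite word $v_k$ whose inverse-branch contraction produces a periodic point $x_k$ close to $\star_1$, and finally to couple $x_k$ with a companion $y_k$ by propagating the base gluing $\star_1 \approx_\alpha \star_2$ through carefully matched inverse branches. This is modeled on Lemma~4.2 of \cite{lin2018quasisymmetry}, now adapted to the quadratic unicritical lamination.

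First I would negate the SR condition to extract $D > 0$ and $\tau < 1$ such that for every $n > D$ and every $D$-rare set $\mathcal{R}_n \subseteq [n]$, at most $\tau n$ digits of $I^\alpha(\alpha)|_{[1,n]}$ are $(D, \mathcal{R}_n)$-duplicating. Applying this with $\mathcal{R}_n = \emptyset$ and a standard pigeonhole argument furnishes an unbounded sequence of positions $m_k$ at which $I^\alpha(\alpha)|_{[m_k, m_k + D]}$ genuinely differs from $I^\alpha(\alpha)|_{[1, D+1]}$, i.e., $h^{m_k}(\alpha) \notin C(I^\alpha(\alpha)|_{[1,D+1]})$. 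For each such $m_k$ I would build a finite word $v_k$, of length on the order of $m_k$ and starting with $L$, whose composition $\widetilde{v_k}$ is a strict contraction of $\T$ into a small subset of $C(L)$ near $\star_1$; using Proposition~\ref{gluing_links} together with the non-matching digit at position $m_k$, the unique fixed point $x_k$ of $\widetilde{v_k}$ is both genuinely periodic and distinct from any point on the orbit of $\alpha$, and $x_k \to \star_1$ follows from the exponentially shrinking diameter of $\widetilde{v_k}(\T)$ together with Lemma~\ref{well_inside}.

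For the companion $y_k$ I would invoke the base equivalence $\star_1 \approx_\alpha \star_2$ and propagate it through inverse branches: switching one carefully chosen $\widetilde{L}$-step in $\widetilde{v_k}$ to the corresponding $\widetilde{R}$-step yields a second contraction sharing the tail structure but producing a distinct fixed point $y_k$ which remains $\approx_\alpha$-equivalent to $x_k$. In the case $|[\star_1]_{\approx_\alpha}| = 2$ the limit is $\star_2$; in the case $|[\star_1]_{\approx_\alpha}| = 4$ one keeps $y_k$ inside $C(L) \setminus C_\ast$ by tracking the inverse images $\widetilde{u}(C_\ast)$ for the relevant words $u$, yielding $y_k \to b_1$. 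To produce $\mu$ and $\mu'$ for a fixed pair $(n,m)$, I would form the invariant Cantor subsets $E \subseteq x_n \frown x_m$ and $E' \subseteq y_n \frown y_m$ cut out by iterated inverse branches of $\widetilde{v_n}$ and $\widetilde{v_m}$, let $\mu$ be the natural Bernoulli measure on $E$, and set $\mu' = \phi \# \mu$ where $\phi : E \to E'$ is the symbolic conjugacy, which respects $\approx_\alpha$ by construction. Exponential contraction makes $E$ and $E'$ Ahlfors regular of some dimension $s \in (0,1)$, giving $\mathcal{E}(\mu), \mathcal{E}(\mu') < \infty$.

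The hardest point is ensuring that $x_k$ and $y_k$ are honestly distinct while remaining $\approx_\alpha$-equivalent: na\"ively, switching a $\widetilde{L}$-step for a $\widetilde{R}$-step in a contraction with matching itinerary risks either producing the same fixed point via a different coding, or destroying the gluing entirely. The non-duplicating tail from Step~1 is exactly what prevents the two inverse-branch trees from resynchronizing, so the two fixed points stay apart while the base gluing is transported to a genuine equivalence between their orbits. The subcase $|[\star_1]_{\approx_\alpha}| = 4$ adds a secondary subtlety: the orbits must avoid the gluing set $C_\ast$ and its inverse images, which is controlled by the uniform separation supplied by Lemma~\ref{well_inside} applied at depth proportional to $m_k$.
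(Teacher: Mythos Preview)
Your construction of the companion point $y_k$ is where the argument breaks. If $x_k$ is the fixed point of $\widetilde{v_k}$ then $I^\alpha(x_k)=\overline{v_k}$; switching a letter of $v_k$ to obtain $v_k'$ and taking $y_k$ to be the fixed point of $\widetilde{v_k'}$ gives $I^\alpha(y_k)=\overline{v_k'}$, and these two periodic words disagree at infinitely many positions with no $\star$'s available to reconcile them, so $x_k\not\approx_\alpha y_k$. Propagating the base gluing $\star_1\approx_\alpha\star_2$ through inverse branches only manufactures equivalences of the form $\widetilde w(\star_1)\approx_\alpha\widetilde w(\star_2)$ between \emph{pre-images of the $\star_i$}; it cannot force two periodic points with different periodic itineraries to be equivalent. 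The paper's mechanism is different in kind: both $x_k$ and $y_k$ are fixed points of the \emph{same} contraction $\widetilde{g_k}^2$, acting on two disjoint arcs $\Gamma_1,\Gamma_2\subset C(L)$, so they share the itinerary $\overline{g_k}$ and are automatically $\approx_\alpha$-equivalent while being distinct because they lie in disjoint arcs.

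For that to work one needs $\widetilde{g_k}$ to be continuous on $\overline{\Gamma_1\cup\Gamma_2}$ for \emph{every} $k$, i.e.\ the discontinuities of $\widetilde{g_k}$ must lie in a \emph{fixed} finite set $\{h^i(\alpha)\}_{i=0}^{3PD}$ independent of $k$, so that $\Gamma_1,\Gamma_2$ can be chosen once and for all. Your extraction from non-SR (taking only $\mathcal R_n=\emptyset$) does not deliver this: it locates non-duplicating positions $m_k$, but by Lemma~\ref{duplicating} the discontinuities of $\widetilde{g_k}$ are governed by the duplicating intervals of $I(\alpha)$ ending just before $m_k$, whose lengths you have not bounded. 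The paper obtains the uniform bound by first passing to a subsequence with \emph{bounded gaps} $m_{i_k}-m_{i_{k}-1}\in[3D,3PD]$; then any duplicating interval reaching into $[m_{i_k}-D,m_{i_k}]$ would have to cover the non-duplicating index $m_{i_k-1}$, forcing its length to be at most $3PD$. Your invocation of Lemma~\ref{well_inside} does not substitute for this: that lemma concerns separation of the macroscopic cover $\{C^K(x)\}$ from the bad sets $\widetilde g(C_*)$, not the domain of definition of the inverse branches $\widetilde{g_k}$.
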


Note that by symmetry, we can replace $C(L)$ and $b_1$ in the statements with $C(R)$ and $b_2$.We first recall some useful properties of (proper) duplicating intervals.

\begin{lemma}\label{duplicating}
    Given a finite word $g\in \set{L,R}^*$, we have \begin{enumerate}
        \item For a cylinder set $C(g)$, $\partial C(g)$ is the union of all $\widetilde u\set{\star_1,\star_2}$'s where $u$ is an initial word $g|_{[1,i]}$ such that $[i+2,|g|]$ is a duplicating interval. We recall that this means \begin{equation*}
            g|_{[i+2,|g|]} = I(\alpha)|_{[1,|g|-i-1]}.
        \end{equation*} 
        In this case, we call the hyperbolic geodesic $l_u$ a \textit{boundary leaf} of $C(g)$. In particular, $l_{g|_{[1,|g|-1]}}$ is a boundary leaf since $[|g|
        +1,|g|]=\emptyset$ is duplicating. 
        \item Let $\widetilde g$ be the inverse map associated with $g$. Then $\widetilde g$ is well-defined everywhere except exactly at $h^i(\alpha)$ where $[|g|-i+1,|g|]$ is a duplicating interval. In particular, $\widetilde g$ is always ill-defined at $h^0(\alpha)=\alpha$. \end{enumerate}
\end{lemma}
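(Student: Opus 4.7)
The plan is to prove Part (2) first, because Part (1) then follows by computing left- and right-hand limits of $\widetilde g$ at its singularities. I would unfold the composition $\widetilde g = \widetilde{g[1]} \circ \cdots \circ \widetilde{g[|g|]}$ one step at a time. Fix $i \geq 0$, set $z_0 = h^i(\alpha)$, and $z_k = \widetilde{g[|g|-k+1]}(z_{k-1})$. Then $\widetilde g$ is ill-defined at $z_0$ precisely when some $z_{k-1}$ equals $\alpha$ for $k \in [1, |g|]$. If at each step the chosen branch agrees with $\alpha$'s orbit, so that $z_k = h^{i-k}(\alpha)$ for all $k \leq i$, then $z_i = \alpha$ and the next application of $\widetilde{g[|g|-i]}$ fails. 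The $k$-th such agreement is forced precisely when $h^{i-k}(\alpha) \in C(g[|g|-k+1])$, i.e.\ when $g[|g|-k+1] = I(\alpha)[i-k+1]$. Letting $k$ run from $1$ to $i$, these conditions are equivalent to $g|_{[|g|-i+1,|g|]} = I(\alpha)|_{[1,i]}$, that is, $[|g|-i+1, |g|]$ being duplicating.

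The converse --- that a ``broken'' chain cannot subsequently land on $\alpha$ --- is the main obstacle. Suppose $k_0$ is the first step at which the chain breaks, so $z_{k_0} = -h^{i-k_0}(\alpha)$ rather than $h^{i-k_0}(\alpha)$. If $z_k = \alpha$ for some $k > k_0$, then $h^{k-k_0}(z_k) = z_{k_0}$ gives
\begin{equation*}
    h^{k-k_0}(\alpha) = -h^{i-k_0}(\alpha).
\end{equation*}
Squaring yields $h^{k-k_0+1}(\alpha) = h^{i-k_0+1}(\alpha)$, and aperiodicity of $\alpha$ forces $k = i$. The displayed identity then reduces to $h^{i-k_0}(\alpha) = -h^{i-k_0}(\alpha)$, which is impossible on $\T$. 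This contradiction settles Part (2); the trivial case $i = 0$, where the empty interval $[|g|+1,|g|]$ is vacuously duplicating, recovers the assertion that $\widetilde g$ is always ill-defined at $\alpha$.

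For Part (1), on the complement of its singularities $\widetilde g$ is a continuous injection into $\T$, and $C(g)$ is the closure of its image, so $\partial C(g)$ consists of one-sided limits of $\widetilde g$ at the singular points. At a singularity $h^j(\alpha)$, the first $j$ branches $\widetilde{g[|g|]}, \dots, \widetilde{g[|g|-j+1]}$ are continuous at the successive iterates $h^j(\alpha), \dots, h(\alpha)$ and produce $\alpha$ after $j$ steps. The next branch $\widetilde{g[|g|-j]}$ jumps at $\alpha$, with left and right limits $\star_1$ and $\star_2$. The remaining branches $\widetilde{g[|g|-j-1]}, \dots, \widetilde{g[1]}$ are continuous and compose to $\widetilde u$ with $u = g|_{[1,|g|-j-1]}$, so the two limit values at $h^j(\alpha)$ are $\widetilde u(\star_1)$ and $\widetilde u(\star_2)$. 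Setting $i = |g|-j-1$ turns the singularity condition ``$[|g|-j+1,|g|]$ is duplicating'' into ``$[i+2,|g|]$ is duplicating'', which is exactly the description of $\partial C(g)$ in the lemma.
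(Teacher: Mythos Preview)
Your proof is correct and, in several respects, more thorough than the paper's. The routes differ in two ways worth noting.

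For Part~(2), the paper simply says that $\widetilde g$ is ill-defined at $h^n(\alpha)$ iff some partial composition $\widetilde{g|_{[i,|g|]}}$ sends $h^n(\alpha)$ to $\alpha$, and that this ``only happens when $g|_{[i,|g|]}$ matches exactly with the itinerary of $\alpha$''. You unpack this by tracking the chain $z_k$ explicitly and, crucially, you supply the converse direction: once the chain deviates from $\alpha$'s orbit it can never return to $\alpha$, via the identity $h^{k-k_0}(\alpha) = -h^{i-k_0}(\alpha)$ and injectivity of the forward orbit. The paper leaves this implicit. One terminological nit: what you invoke is not mere aperiodicity of $\alpha$ but injectivity of $n\mapsto h^n(\alpha)$, i.e.\ the non-pre-periodicity assumed from Section~4.3 onward.

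For Part~(1), the paper argues forward: $\widetilde u\{\star_1,\star_2\}\subseteq\partial C(g)$ iff $\alpha\in h^{|u|+1}(C(g))$, which unwinds to the duplicating condition directly. You instead deduce Part~(1) from Part~(2) by computing the one-sided limits of $\widetilde g$ at each singularity $h^j(\alpha)$; the first $j$ branches carry $h^j(\alpha)$ to $\alpha$, the next branch jumps to $\star_1,\star_2$, and the remaining $|g|-j-1$ branches (continuous there because $\alpha$ is not pre-periodic) compose to $\widetilde u$ with $u=g|_{[1,|g|-j-1]}$. Your approach makes the link between the two parts transparent and gives a concrete picture of how the boundary arises; the paper's approach is shorter but relies on the reader translating the containment $\alpha\in C(g|_{[|u|+2,|g|]})$ into the duplicating condition. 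Both are valid.
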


\begin{proof}
    For part~(1), we note that $\widetilde u\set{\star_1,\star_2} \subseteq \partial C(g)$ exactly when \begin{equation*}
        \alpha \in h^{|u|+1}(C(g))=C(g|_{[|u|+1,|g|]}).
    \end{equation*}

    For part~(2), we note that $\widetilde{g}$ is ill-defined at $h^n(\alpha)$ if and only if there exists some $i$ between $2$ and $|g|+1$ such that \begin{equation*}
        \alpha = \widetilde{g|_{[i,|g|]}}(h^n(\alpha)).
    \end{equation*} This only happens when $g|_{[i,|g|]}$ matches exactly with the itinerary of $\alpha$ up to the first $n-1$ steps (here the first step is the original position of $\alpha$).
\end{proof}

\begin{proof}[Proof of Lemma~\ref{periodic}]
    We suppose $|[\star_i]|=4$. The cases for $|[\star_i]|=2$ can be proven using a similar argument and we will omit the details here.

    Since $\alpha$ is not strongly recurrent, there exists integer $D$ such that for any $D$-rare set $\mathcal{R}$, there exist integer $P>2$ and an increasing sequence of integers $\set{n_i}$ with $n_i\leq Pi$ such that $n_i$-th digit of $I(\alpha)$ is not $(D,\mathcal{R})$-duplicating.

    Let $m_i:=n_{3Di}$. Then $\set{m_i}$ is also increasing and each $m_i$ is at least $3D$ digits apart. Since we have $n_i\leq Pi$, it follows that $m_{i}\leq 3PDi$. Thus there exists an increasing sequence of indices $\set{i_k}$ such that \begin{equation*}
	3D \leq m_{i_k} - m_{i_k-1} \leq 3PD
    \end{equation*}By our assumption, for any $i\in\N$, there does not exist any $\mathcal{R}$-duplicating interval with length exceeding $D$ that covers the $m_{i}$-th digit of $I(\alpha)$. In particular, there cannot be any proper duplicating interval with length exceeding $D$ that covers the $m_{i}$-th digit. For any $k\in\N$ and the cylinder set $C(LI^\alpha|_{[1,m_{i_k}]})$, it follows that each boundary leaf $l_{Lu}$ (aside from $\set{\star_1,\star_2}$) satisfies that \begin{equation*}
	m_{i_k} - D \leq |u| \leq m_{i_k}-1.
    \end{equation*} Moreover, since $m_{i_k-1}>m_{i_k}-3PD$, we have $[|u|-3PD,|u|]$ contains $m_{i_k-1}$. Therefore, there cannot be a duplicating interval of $I^\alpha$ covering $[|u|-3PD,|u|]$ (which has length greater than $D$). It follows that if $\widetilde{Lu}$ is ill-defined at some $h^{K}(\alpha)$ with $K>0$, we have \begin{equation*}
	|u|-3PD \leq |u|-K \leq |u|-1.
    \end{equation*} 
    
    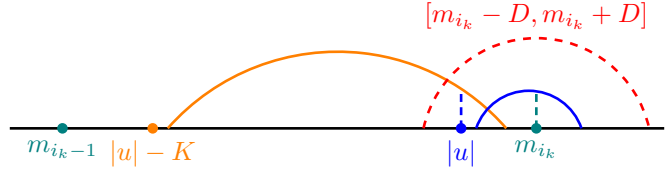
\begin{figure}[ht]
        \centering
        \begin{tikzpicture}
            \draw [line width=1pt] (0,0)--(8.7,0);
            \fill [teal] (0.7,0) circle (2pt) node [below=1pt] {$m_{i_k-1}$};
            \fill [teal] (7,0) circle (2pt) node [below=1pt] {$m_{i_k}$};
            \fill [orange] (1.9,0) circle (2pt) node [below=1pt] {$|u|-K$};
            \fill [blue] (6,0) circle (2pt) node [below=1pt] {$|u|$};
            \draw [orange, line width=1pt](2.1,0) to[curve through = {(4.7,1)}] (6.6,0);
            \draw [blue, line width=1pt](6.2,0) to[curve through = {(6.9,0.5)}] (7.6,0);
            \draw [dashed, red, line width=1pt](5.5,0) to[curve through = {(7,1.2)}] (8.5,0);
            \node[red] (a) at (7,1.5) {$[m_{i_k}-D,m_{i_k}+D]$};
            \draw [densely dashed, blue, line width=1pt] (6,0) -- (6,0.5);
            \draw [densely dashed, teal, line width=1pt] (7,0) -- (7,0.5);
        \end{tikzpicture}
        \caption{Duplicating intervals around $m_{i_k}$. Note that the duplicating interval associated with $u$ covers $m_{i_k}$ (in blue) and thus cannot have length exceeding $D$; and the duplicating interval associated with $|u|-K$ ends in $[m_{i_k}-D,m_{i_k}]$ (in orange) and thus cannot cover $m_{i_k-1}$}.
        \label{fig:intervals}
    \end{figure}

    Therefore $\widetilde{Lu}$ is well-defined outside $\set{h^i(\alpha)}_{i=0}^{3PD}$. Let $\Gamma_i$ denote the components of $C(L) \backslash \set{h^i(\alpha)}_{i=0}^{3PD}$ that contain $\star_i$ respectively. We shrink each $\Gamma_i$ slightly such that $\Gamma_1\cup \Gamma_2$ are positive distance away from $\set{h^i(\alpha)}_{i=0}^{3PD}$ while still contains $b_1$ and let $J$ be the arc $C(L)\backslash \overline{\Gamma_1\cup \Gamma_2}$.

    \begin{figure}[ht]
        \centering
        \begin{tikzpicture}[scale=0.8]
            \draw [line width=1pt] (0,0) circle (5);
            \draw [blue, line width=7pt, opacity=0.5] ({5*cos(170)},{5*sin(170)}) arc(170:222.5:5);
        
            \draw [blue, line width=7pt, opacity=0.5] ({5*cos(42.5)},{5*sin(42.5)}) arc(42.5:65:5);

            \draw [dashed, line width=1pt, red, opacity=0.7] ({5*cos(222.5)},{5*sin(222.5)}) -- ({5*cos(42.5)},{5*sin(42.5)});
        
            \fill [green, fill opacity=0.3] ({5*cos(42.5)},{5*sin(42.5)}) arc(42.5:45:5) -- ({5*cos(45)},{5*sin(45)}) arc({315}:{140}:{5*tan(2.5)}) -- ({5*cos(50)},{5*sin(50)}) arc(50:55:5) -- ({5*cos(55)},{5*sin(55)}) arc(325:270:{5*tan((180-55)/2)}) -- ({5*cos(180)},{5*sin(180)}) arc(180:186:5) -- ({5*cos(186)},{5*sin(186)}) arc(466:286:{5*tan(10)}) -- ({5*cos(206)},{5*sin(206)}) arc(206:222.5:5) -- cycle;

            \nodeatc{0}{0}{5.5}{195}{$\Gamma_2$}{blue}
            \nodeatc{0}{0}{5.5}{55}{$\Gamma_2$}{blue}

            \braceme[line width=2pt, orange, opacity=0.7]{5.4}{68}{167}{br1}{}

            \nodeatc{0}{0}{6.3}{117.5}{$\set{h^i(\alpha)}^{3PD}_{i=0}$}{orange}
            
            \draw [teal, line width=2pt] ({5*cos(55)},{5*sin(55)}) arc(325:270:{5*tan((180-55)/2)}) node[pos=0.5,above=2pt]{\large $l_{g_k}$};

            \nodeatc{0}{0}{2.5}{195}{\large $C(LI^\alpha|_{[1,m_{i_k}]})$}{teal}
            \fill [red] ({5*cos(85)},{5*sin(85)}) circle (3pt) node [below=0.6pt] {$\alpha$};
            
            \fill [red] ({5*cos(42.5)},{5*sin(42.5)}) circle (3pt) node [above right=0.6pt] {$\star_1$};
            
            \fill [orange] ({5*cos(213)},{5*sin(213)}) circle (3pt) node [left=2pt] {$b_1$};
            \fill [red] ({5*cos(222.5)},{5*sin(222.5)}) circle (3pt) node [below left=1pt] {$\star_2$};
            
        \end{tikzpicture}
        \caption{$I_i$, $C(LI^\alpha|_{[1,m_{i_k}]})$ and $l_{g_k}$ in the proof of Lemma~\ref{periodic} when $\star_1\frown b_1\subseteq C(L)$}
    \end{figure}
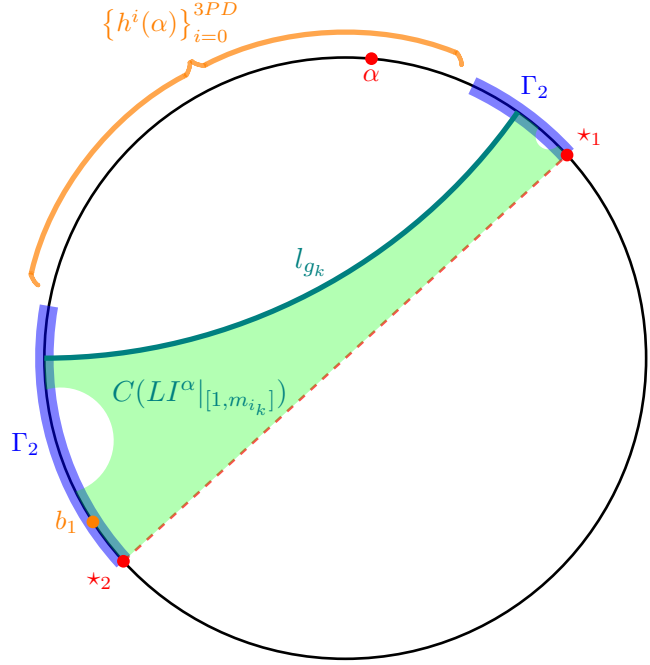
    
    Note that $C(LI^\alpha|_{[1,m_{i_k}]})$ shrinks to $\set{\star_1,\star_2,b_1}$ monotonely as $k\to\infty$. Since we have $\set{h^i(\alpha)}_{i=0}^{3PD}\subseteq\overline{J}\subsetneq \star_1\frown b_1$, we can find $k$ sufficiently large such that \begin{equation*}
	C(LI^\alpha|_{[1,m_{i_k}]}) \cap \overline{J} = \emptyset.
    \end{equation*} For such a cylinder set, there exists a boundary leaf $l_{Lu}$ that connects $\Gamma_i$. We denote $g_k=Lu$ and we have shown that $\widetilde{g_k}$ is well-define outside $\set{h^i(\alpha)}_{i=0}^{3PD}$. In particular, $\widetilde{g_k}$ is a continuous on $\overline{\Gamma_1\cup \Gamma_2}$ and maps $\set{\star_1,\star_2}$ to $\widetilde{g_k}(\set{\star_1,\star_2})$. It follows that $\widetilde{g_k}^2$ is a contraction on each $\overline{\Gamma_i}$ and thus each $\overline{\Gamma_i}$ contains a periodic point with itinerary $\overline{g_k}$. Having the same itinerary, the two periodic points are thus glued by $\approx_\alpha$.

    Note that we have $|g_k|\in [m_{i_k}-D+1,m_{i_k}]$. Thus $|g_k|\to\infty$ as we increase $k$. According to the proof of Lemma~4.2 of \cite{lin2019conformal}, since $\alpha$ is not pre-periodic, for any $k$, there exists a $j>k$ such that $\widetilde{g_k}^2$ and $\widetilde{g_j}^2$ fix different points and thus correspond to different gluing pairs of periodic points. 

    By the proof of Proposition~4.3 in \cite{lin2018quasisymmetry}, between any two gluing pairs of periodic points we constructed above (say associated with $g_k$ and $g_j$ respectively), we can find a pair of generalized Cantor set between the two pairs by the iterative function system $\set{\widetilde{g_k}^2,\widetilde{g_j}^2}$ such that the pair is glued up by a linear transformation. The result then follows.
\end{proof}

\begin{proof}[Proof of Proposition~\ref{good_scale}]
    Given any $C^K(x)\in \set{C^K(y)}_{y\in\T}$, let $\delta>0$ be the constant in Lemma~\ref{well_inside} which indicates the distance between $x$ and the "bad sets" in $C^K(x)$. We will show that there exists an $(N,C,r)$-nice gluing circuit $\set{A_i}\cup \set{A'_i}$ in $C^K(x)$ with the property that for all $A\in \set{A_i}\cup \set{A'_i}$, there exists a boundary leaf $l_g$ such that $\text{dist}(A,\widetilde{g}(C_*)) \leq \delta$. This would imply that $\set{A_i}\cup \set{A'_i}$ is a nice gluing circuit around $x$.

    Suppose that the boundary of our $C^K(x)$ is the union of $\set{\widetilde g_n\set{\star_1,\star_2}}_{n=1}^N$ ordered counterclockwise. Given any boundary leaf $l_{g_n}$ of $C^K(x)$, since $\alpha$ is aperiodic and $\widetilde{g_n}$ is only ill-defined at a subset of $\set{h^i(\alpha)}_{i=0}^{|g_n|}$, $\widetilde{g_n}$ is always defined and continuous at some neighborhood of $C_*$. Then Lemma~\ref{periodic} implies that we can find two sequences of periodic points $\set{x_i}$ and $\set{y_i}$ such that $\widetilde{g_n}(x_i) \approx_\alpha \widetilde{g_n}(y_i)$, and the geodesics connecting $ \widetilde{g_n}(x_i)$ and $ \widetilde{g_n}(y_i)$ approaches $\widetilde{g_n}(C_*)$. In particular, we can find two pairs $\set{\widetilde{g_n}(x_N),\widetilde{g_n}(y_N)}$ and $\set{\widetilde{g_n}(x_M),\widetilde{g_n}(y_M)}$ in this sequence that are less than $\delta$ away from $\widetilde{g_n}(C_*)$. Also by Lemma~\ref{periodic}, we can find two measures $\mu$ and $\mu'$ of finite logarithmic energy supported in $x_N\frown x_M$ and $y_N\frown y_M$ respectively that are glued together via $\approx_\alpha$. We set $A_n = \widetilde{g_n}(x_N\frown x_M)$, $A'_n = \widetilde{g_n}(y_N\frown y_M)$, $B_n = \supp \widetilde{g_n}\#\mu \subseteq A_n$ and $B'_n = \supp \widetilde{g_n}\#\mu' \subseteq A'_n$.
    
    Note that this collection $\set{A_n}\cup \set{A'_n}$ for each $C^K(x)$ is finite and there are also only finitely many different gluing links in $\set{C^K(x)}$. Thus we can find some constants $(N,C,r)$ such that for each $C^K(x)$, $\set{A_n}\cup \set{A'_n}$ is a $(N,C,r)$-nice gluing circuit. By our construction, for each $C^K(x)$, the gluing circuit $\set{A_n}\cup \set{A'_n}$ is less than $\delta$ away from the bad sets $\set{\widetilde{g}(C_*)}$, thus $\set{A_n}\cup \set{A'_n}$ should be around $x$.
\end{proof}

\subsection{When $\set{C^K(x)}$ is Digit-Fixing}
For $\set{C^K(x)}$ to be digit-fixing, we need to introduce a weak pre-periodicity condition.

\begin{definition}[Weak pre-periodicity]
    We say $\alpha\in \T$ is \textit{weakly pre-periodic} with period $k$ with there exists integer $m$ such that $I(\alpha)[m+kn]$ is the same letter for all integer $n\in\N$.
\end{definition}

\begin{lemma}
    Weakly pre-periodic points on $\T$ has measure zero.
\end{lemma}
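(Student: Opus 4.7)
Decompose the set of weakly pre-periodic angles as a countable union
\begin{equation*}
    \bigcup_{m,k \geq 1,\ c \in \{L,R\}} E_{m,k,c}, \qquad E_{m,k,c} := \{\alpha \in \T : I(\alpha)[m+kn] = c \text{ for all } n \in \N\}.
\end{equation*}
It therefore suffices to prove that each $E_{m,k,c}$ has Lebesgue measure zero.

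Parametrize $\T$ by $\theta \in [0,1)$ via $\alpha = e^{2\pi i\theta}$, and let $T(\theta) := 2\theta \bmod 1$ be the doubling map (the angular-coordinate version of $h$). Unwinding the definition of the itinerary, $I(\alpha)[j] = L_1$ is equivalent to $T^{j-1}(\theta)$ lying in the semicircle $C_\alpha(L_1) = (\theta/2,\, \theta/2 + 1/2) \bmod 1$, with an analogous statement for $L_2$. Thus $\alpha \in E_{m,k,c}$ exactly when the orbit $V_n := T^{m+kn-1}(\theta)$ stays inside the $\theta$-dependent but otherwise fixed semicircle $C_\alpha(c)$ of Lebesgue measure $1/2$ for every $n \geq 0$.

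Next I would invoke ergodicity. Because $T$ is mixing for Lebesgue measure, every iterate $T^k$ is ergodic. Birkhoff's ergodic theorem yields, for each $k$, a null set outside of which every $T^k$-orbit equidistributes in $[0,1)$. Pulling back through the measure-preserving map $T^{m-1}$ (which sends the $T^k$-orbit of $T^{m-1}\theta$ to the desired $(V_n)$) and intersecting over $(m,k) \in \N^2$, one obtains a Lebesgue-full-measure set $\Theta \subseteq [0,1)$ such that for every $\theta \in \Theta$ and every $m,k \geq 1$ the orbit $(V_n)_{n \geq 0}$ equidistributes in $[0,1)$.

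For $\theta \in \Theta$ fixed, $C_\alpha(c)$ is then a particular interval of measure $1/2$, and equidistribution gives $N^{-1}\#\{0 \leq n < N : V_n \in C_\alpha(c)\} \to 1/2$. In particular both $V_n \in C_\alpha(c)$ and $V_n \notin C_\alpha(c)$ occur for infinitely many $n$, so $I(\alpha)[m+kn]$ takes both letters $L$ and $R$ as $n$ varies and $\alpha \notin E_{m,k,c}$. The weakly pre-periodic set is therefore contained in the null complement of $\Theta$. The only mildly delicate point is that the target $C_\alpha(c)$ depends on $\theta$; this is harmless because $\theta$ is fixed first, after which $C_\alpha(c)$ is a deterministic fixed set of positive measure, which is exactly what equidistribution of $(V_n)$ is allowed to be tested against.
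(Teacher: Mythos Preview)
Your argument is correct. The one point worth making explicit is that ``equidistribution'' here must mean weak convergence of the empirical measures to Lebesgue (equivalently, convergence of visit frequencies for \emph{every} interval), not merely Birkhoff convergence for a single fixed test set; this follows from Birkhoff applied to a countable dense family of indicator functions together with a standard approximation, and is what allows you to test against the $\theta$-dependent semicircle $C_\alpha(c)$ after fixing $\theta$. You handle this correctly in your final paragraph.

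Your route is genuinely different from the paper's. Both proofs begin with the same countable decomposition into the sets $E_{m,k,c}$, but the paper then passes to the symbolic space $\{L,R\}^\infty$, observes that the set $S_{m,k,X}$ of sequences with a fixed letter along an arithmetic progression has Hausdorff dimension at most $(k-1)/k<1$, and invokes a cited result of Smirnov that the itinerary map $\alpha\mapsto I^\alpha(\alpha)$ does not increase Hausdorff dimension. This yields the stronger conclusion $\dim_{\mathcal H}(E_{m,k,c})<1$, at the cost of importing a nontrivial black box about the itinerary map. Your ergodic-theoretic argument is more elementary and entirely self-contained, using only mixing of the doubling map and Birkhoff's theorem, but it only gives Lebesgue measure zero. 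For the purposes of Theorem~\ref{main_theo} that is all that is needed.
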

\begin{proof}
    Given $m,k\in\N$ and $X\in\set{L,R}$, we define \begin{equation*}
	S_{m,k,X}:=\set{u\in \set{L,R}^\infty: u[m+nk] = X\quad\text{for all}\quad n}.
    \end{equation*} Under the metric of the space of sequences, we have that \begin{equation*}
		\dim_{\mathcal{H}}(S_{m,k,X}) \leq \frac{k-1}{k}.
    \end{equation*} According to Proposition~1 of \cite{smirnov2000symbolic} (see also \cite[Section~16]{bruinsymbolic}), pulling back by the map $\varphi: \alpha \mapsto I^\alpha$ does not increase Hausdorff dimension. Thus the result follows from the fact that \begin{equation*}
	\set{\alpha~\text{is weakly pre-periodic}} = \bigcup_{(m,k,X)\in\N^2\times\set{L,R}} \varphi^{-1}(S_{m,k,X}).
    \end{equation*}
\end{proof}

\begin{proposition}\label{digit_matching}
    If $\alpha$ is not strongly recurrent nor is it weakly pre-periodic, then for any integer $L$, there exists $L_\alpha>0$ such that $\set{C^{L_\alpha}(x)}$ is $L$-digit-fixing.
\end{proposition}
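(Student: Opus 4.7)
The plan is to verify the digit-fixing criterion of Proposition~\ref{digit_fixing}: it suffices to show that for $L_\alpha$ sufficiently large, no generalized cylinder set $C(v)$ of degree at least $L_\alpha$ containing $\alpha$ can simultaneously contain any of $h(\alpha), \ldots, h^L(\alpha)$. Since $\alpha$ is not pre-periodic (implied by the non-weak-pre-periodicity assumption), Lemma~\ref{comp} identifies each derived gluing link $(C^{L_\alpha})^{(n)}(x)$ with such a generalized cylinder set, so this reduction is sound.

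Suppose for contradiction that no such $L_\alpha$ exists. Then one can find legal words $v_K$ with $|v_K| \to \infty$ whose cylinders contain $\alpha$ together with $h^{i_K}(\alpha)$ for some $i_K \in [1, L]$. Passing to a subsequence via pigeonhole on the finite set $[1, L]$, I may assume $i_K = i^*$ for a fixed $i^*$. At each non-$\star$ position $j$ of $v_K$ with $j + i^* \le |v_K|$, the containment $\alpha \in C(v_K)$ forces $v_K[j] = I(\alpha)[j]$ (using aperiodicity), while $h^{i^*}(\alpha) \in C(v_K)$ forces $v_K[j] = I(\alpha)[j + i^*]$, so
\[
I(\alpha)[j] = I(\alpha)[j + i^*] \quad \text{at every non-}\star\text{ position } j \text{ of } v_K \text{ with } j + i^* \le |v_K|.
\]

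The next step is to invoke non-strong-recurrence to push the $\star$-positions of $v_K$ into the tail. Let $\mathcal{R}_K$ denote the $\star$-set of $v_K$. Corollary~\ref{stars}, combined with $v_K[k] = I(\alpha)[k]$ at non-$\star$ positions, implies that each $\star$ at position $j$ exhibits $[j+1, |v_K|]$ as an $\mathcal{R}_K$-duplicating interval of $I(\alpha)|_{[1, |v_K|]}$ covering the final digit. Assuming $\mathcal{R}_K$ is $D_0$-rare for the $(D_0, \tau_0)$ supplied by non-SR, the smallest element $j_0$ of $\mathcal{R}_K$ makes every digit in $[j_0 + 1, |v_K|]$ simultaneously $(D_0, \mathcal{R}_K)$-duplicating; non-SR then forces $|v_K| - j_0 \le \tau_0 |v_K|$, so $j_0 \ge (1-\tau_0)|v_K|$. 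Consequently $v_K = I(\alpha)$ on the initial segment $[1, (1-\tau_0)|v_K|]$, and the displayed identity propagates to $I(\alpha)[j] = I(\alpha)[j + i^*]$ on a segment whose length tends to infinity with $K$. In the limit, $I(\alpha)$ becomes periodic from the start with period $i^*$, contradicting $\alpha$ not being pre-periodic.

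The main obstacle is justifying that $\mathcal{R}_K$ is $D_0$-rare; this is where non-weak-pre-periodicity is brought in. A cluster of four or more $\star$s at positions $j_1 < \cdots < j_r$ inside a window of length $D_0$ would, by Corollary~\ref{stars}, force the simultaneous identities $I(\alpha)[k - j_1] = \cdots = I(\alpha)[k - j_r]$ at every non-$\star$ position $k > j_r$ of $v_K$. As $|v_K| \to \infty$, these multi-shift matches accumulate; iterating them along the induced arithmetic structure and extracting a diagonal limit should produce a fixed shift $d \le D_0$ and an infinite arithmetic progression on which $I(\alpha)$ is constant, directly contradicting the non-weak-pre-periodicity of $\alpha$. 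Calibrating $D_0$ correctly against the pigeonhole needed to extract such a progression uniformly in $K$ is where the bulk of the technical work lies.
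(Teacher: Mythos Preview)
Your reduction via Proposition~\ref{digit_fixing} and Lemma~\ref{comp} to the statement ``no generalized cylinder set of sufficiently large degree contains both $\alpha$ and some $h^i(\alpha)$'' is correct and matches the paper's reduction to Proposition~\ref{digit_determining}. The conditional argument assuming $\mathcal{R}_K$ is $D_0$-rare is also sound: the interval $[j_0+1,|v_K|]$ is indeed $\mathcal{R}_K$-duplicating and ends at the last digit, so non-SR pushes $j_0$ to the right, and the resulting periodicity of $I(\alpha)$ contradicts non-weak-pre-periodicity.

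The genuine gap is the $D_0$-rarity of $\mathcal{R}_K$, and your sketch does not close it. The identities $I(\alpha)[k-j_1]=\cdots=I(\alpha)[k-j_r]$ that a cluster of $\star$'s produces hold only at \emph{non-$\star$} positions $k\le |v_K|$; since you are precisely trying to bound the density of $\star$'s, invoking ``most $k$ are non-$\star$'' is circular. Moreover the cluster location $(j_1,\dots,j_r)$, the window it sits in, and the set of admissible $k$'s all vary with $K$, so the pigeonhole you suggest does not yield an infinite arithmetic progression on which $I(\alpha)$ is constant. The only unconditional bound available is ``no $k_0$ consecutive $\star$'s'' (from the first non-constant prefix of $I(\alpha)$), which is far weaker than $D_0$-rarity for the fixed $D_0$ supplied by non-SR; enlarging $D_0$ only makes rarity harder to verify.

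The paper avoids this difficulty entirely by never analysing the $\star$-density of $\legal(I(\alpha)|_{[1,n]})$. Instead it uses non-SR through Lemma~\ref{periodic} to manufacture a pre-periodic gluing pair $\widetilde u(\{x_1,x_2\})$ of arbitrarily large period that separates $\alpha$ from $h^i(\alpha)$, and then proves (Lemma~\ref{preperiodic}) via a functional-graph/truncation argument on the legal word of that \emph{pre-periodic} point that $\alpha\notin GCS_n(\widetilde u(x_1))$ for large $n$; this last step is where non-weak-pre-periodicity enters, producing the desired arithmetic progression from the periodic structure of $I^\alpha(\widetilde u(x_1))$ rather than from hypothetical $\star$-clusters in $GCS_n(\alpha)$.
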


Note that if $\alpha \in (C^K)^{(n)}(x)$, then the gluing link $(C^K)^{(n)}(x)$ is either $GCS_{n+K}(\alpha)$ or $GCS_{n+K+1}(\alpha)$ by the property of generalized cylinder sets. Thus by the criterion in Proposition~\ref{digit_fixing}, the proposition above holds due to the following.

\begin{proposition}\label{digit_determining}
    Suppose that $\alpha$ satisfies the same condition as in Theorem~\ref{main_theo}. Then given any integer $L>0$, there exists integer $L_\alpha>0$ such that \begin{equation*}
        GCS_n(\alpha) \cap \set{h^i(\alpha)}_{i=1}^L = \emptyset \quad \text{for all}\quad n\geq L_\alpha.
    \end{equation*}
\end{proposition}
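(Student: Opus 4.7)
The plan is to fix $i \in \{1,\dots,L\}$ and produce $N_i$ such that $h^i(\alpha) \notin GCS_n(\alpha)$ for every $n \geq N_i$; setting $L_\alpha := \max_{1\leq i\leq L} N_i$ then finishes the proof. First I would translate the geometric condition into a combinatorial one. Writing $u_n := \legal(I(\alpha)|_{[1,n+1]})$ and $F_n$ for its free digits, Lemma~\ref{legal} gives $GCS_n(\alpha) = C(u_n)$, and Proposition~\ref{rep} says $h^i(\alpha) \in C(u_n)$ iff $I(\alpha)[k+i] = I(\alpha)[k]$ at every non-free $k \in [1,n+1]$. Setting
\[
A_i := \{k \geq 1 : I(\alpha)[k+i] \neq I(\alpha)[k]\},
\]
this reads $A_i \cap [1,n+1] \subseteq F_n$, and Corollary~\ref{stars} further identifies $k \in F_n$ with $\alpha \in GCS_{n-k}(h^k(\alpha))$.

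Next I would use non-weak-pre-periodicity to rule out $h^i(\alpha) \approx_\alpha \alpha$. Since $\alpha$ is not pre-periodic, neither $I(\alpha)$ nor $I^\alpha(h^i(\alpha))$ contains a $\star$, so $h^i(\alpha) \approx_\alpha \alpha$ would force $I(\alpha)[k+i] = I(\alpha)[k]$ for every $k$, making $I(\alpha)$ honestly $i$-periodic and $\alpha$ weakly pre-periodic. The same reasoning rules out eventual $i$-periodicity, so $A_i$ is infinite.

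Now I would argue by contradiction. Suppose $h^i(\alpha) \in GCS_{n_j}(\alpha)$ for a subsequence $n_j \to \infty$. Then $A_i \cap [1,n_j+1] \subseteq F_{n_j}$, and for every $k \in F_{n_j}$ the interval $[k+1,n_j+1]$ is an $F_{n_j}$-duplicating interval of $I(\alpha)$ under the shift $k$: by the recursive characterization of free digits, every $j' \in [k+1,n_j+1]$ with $I(\alpha)[j'] \neq I(\alpha)[j'-k]$ is itself forced into $F_{n_j}$. All these intervals are nested with common right endpoint $n_j+1$, and $\min F_{n_j} \leq \min A_i$ is uniformly bounded in $j$, so the outermost interval $[\min F_{n_j}+1,n_j+1]$ is $F_{n_j}$-duplicating, meets the final letter, and has length tending to infinity; hence every digit in it is $(D,F_{n_j})$-duplicating for every $D$. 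Given any $D$ and $\tau<1$, the plan is then to thin $F_{n_j}$ to a $D$-rare subset $\mathcal{R}_{n_j}$ that still certifies $\mathcal{R}_{n_j}$-duplication of this outer interval, via a Besicovitch-type pruning in the spirit of the proof of Theorem~\ref{smirn} exploiting the self-similar inclusion $(A_k+k) \cap [k+1,n_j+1] \subseteq F_{n_j}$ valid for each $k \in F_{n_j}$. This would yield, for $j$ large, more than $\tau n_j$ $(D,\mathcal{R}_{n_j})$-duplicating digits of $I(\alpha)|_{[1,n_j]}$, contradicting the hypothesis that $\alpha$ is not strongly recurrent.

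The main obstacle is this sparsification. Unlike in the proof of Theorem~\ref{smirn}, where the intervals $[i,l(i)]$ have varying right endpoints and a direct Besicovitch covering produces a $D$-rare exception set, here the duplicating intervals are all nested with common right endpoint $n_j+1$, so the $D$-rareness of $\mathcal{R}_{n_j}$ must be extracted from the recursive self-similar cascade of free digits — each free digit being itself the endpoint of a shorter duplicating interval — rather than from disjointness. Once this combinatorial lemma is in place, taking the maximum of the $N_i$ over $i \in \{1,\dots,L\}$ closes the argument.
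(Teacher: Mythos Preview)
Your reduction is correct and clean: $h^i(\alpha)\in GCS_{n}(\alpha)$ is equivalent to $A_i\cap[1,n+1]\subseteq F_n$, and each $k\in F_n$ does give an $F_n$-duplicating interval $[k+1,n+1]$. But the sparsification step you flag as ``the main obstacle'' is a genuine gap, not a technicality. The only a priori sparseness of $F_{n_j}$ is that it contains no run of $k_0$ consecutive integers, where $k_0$ is the least index with $I(\alpha)[k_0]\neq I(\alpha)[1]$; so $F_{n_j}$ can have $\gtrsim n_j/k_0$ elements and fail $D$-rarity as soon as $D>3k_0$. Thinning $F_{n_j}$ to a $D$-rare $\mathcal R_{n_j}$ destroys the duplication, because the actual mismatches at the removed indices are no longer excused, and the nested, common-right-endpoint structure of your intervals gives no Besicovitch-type leverage to manufacture disjoint long $\mathcal R$-duplicating intervals. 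The ``recursive self-similar cascade'' you invoke does not by itself produce a $D$-rare exception set. It is also telling that your argument uses non-weak-pre-periodicity only to make $A_i$ infinite; if your route worked it would essentially settle the paper's own conjecture that the non-WPP hypothesis is unnecessary, which the author leaves open.

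The paper's proof is entirely different and sidesteps this combinatorics. Non-SR is used only through Lemma~\ref{periodic}, to produce $\approx_\alpha$-glued pairs of periodic points (of arbitrarily large period) accumulating on $[\star_1]_{\approx_\alpha}$. For fixed $i$ one first separates $\alpha$ from $h^i(\alpha)$ by a boundary leaf $l_u$ of an ordinary cylinder set $C(I(\alpha)|_{[1,n_i]})$, then pushes a nearby periodic pair through $\widetilde u$ to obtain a glued pair of \emph{pre-periodic} points whose connecting geodesic still separates $\alpha$ from $h^i(\alpha)$. The substantive work is Lemma~\ref{preperiodic}, proved using non-WPP alone: for any pre-periodic $x$ of large enough period, $\alpha\notin GCS_n(x)$ for all large $n$, via a truncation algorithm and a functional-graph walk that would otherwise force $I(\alpha)$ to agree with the periodic $I^\alpha(x)$ along an arithmetic progression. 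Since $GCS_n(\widetilde u(x_1))=GCS_n(\widetilde u(x_2))$ eventually lies strictly on the $h^i(\alpha)$-side of the separator, $h^i(\alpha)\notin GCS_n(\alpha)$ for all $n\geq N_i$. So in the paper the two hypotheses play swapped roles from yours: non-SR supplies the geometric separator, and non-WPP carries the weight.
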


Note that if we replace the generalized cylinder sets $GCS_n(\alpha)$ (which is represented by $\legal(I(\alpha)|_{[1,n+1]})$) with proper cylinder sets $C(I(\alpha)|_{[1,n+1]})$, then it is easy to see that the result is true so long as $\alpha$ is not pre-periodic. I suspect that Proposition~\ref{digit_determining} holds even without the non-weakly pre-periodic condition. In that case, all points on $\T$, except for a set of Hausdorff dimension zero, would satisfy $CCE$. It might be worth looking into.

\begin{conjecture}
    Suppose $\alpha$ is not strongly recurrent nor is it pre-periodic. Then the statement in Proposition~\ref{digit_determining} still holds.
\end{conjecture}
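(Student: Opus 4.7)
The plan is to argue by contradiction. Since $\set{GCS_n(\alpha)}_{n\geq 0}$ is a decreasing sequence (finer leaves refine the partition) and $\set{h^i(\alpha)}_{i=1}^L$ is finite, failure of the conclusion forces some $i \in \set{1,\ldots,L}$ to satisfy $h^i(\alpha) \in GCS_n(\alpha)$ for every $n$; equivalently, $i$ lies in the set
\[F := \set{j \geq 0 : h^j(\alpha) \in GCS_m(\alpha)\ \text{for all}\ m}.\]
Using Lemma~\ref{legal} and Corollary~\ref{stars}, the legal-word representation of $GCS_n(\alpha)$ converts this membership into a digit-matching statement: for all $j \in F$ and all $k \in \N \setminus F$,
\[I(\alpha)[j+k] = I(\alpha)[k],\]
because the persistently non-free digits of $\legal(I(\alpha)|_{[1,n+1]})$ as $n \to \infty$ are precisely the indices in $\N \setminus F$.

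Applying this identity with $j = i$, the interval $[i+1, n+1]$ of $I(\alpha)$ becomes an $(i+F)$-duplicating interval of $I(\alpha)|_{[1,n+1]}$ of length $n-i+1$. If $F$ is $D$-rare for some integer $D$, then so is $i+F$, and for $n$ large every digit in $[i+1, n+1]$ is $(D, (i+F)\cap[1,n+1])$-duplicating; the fraction of duplicating digits in $[1,n]$ then tends to $1$, contradicting the hypothesis that $\alpha$ is not strongly recurrent. At the opposite extreme, if $F$ is finite, then for every $k > \max F$ the identity $I(\alpha)[i+k] = I(\alpha)[k]$ holds unconditionally, so $I(\alpha)$ is eventually strictly periodic with period $i$; taking $m = \max F + 1$ and $X = I(\alpha)[m]$, the sequence $k \mapsto I(\alpha)[m+ki]$ is constant, contradicting non-weak-pre-periodicity of $\alpha$.

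The main obstacle is the intermediate regime in which $F$ is infinite but fails to be $D$-rare for any $D$. One plan for ruling this out is to chain the shift identities: given $j_1, j_2 \in F$ and $k \in \N \setminus F$ with $j_2 + k \notin F$, one obtains $I(\alpha)[(j_1+j_2)+k] = I(\alpha)[j_2+k] = I(\alpha)[k]$, so sums and differences of elements of $F$ behave like shift-periods of $I(\alpha)$ on a large subset of $\N \setminus F$. A careful combinatorial bookkeeping should then either produce a period dividing some small multiple of $i \leq L$, yielding weak pre-periodicity, or extract a genuinely $D$-rare substructure of $F$ supporting the strong-recurrence argument of the previous paragraph. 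Handling the cases where $j_2 + k \in F$ (so the chain breaks), and leveraging the bound $i \leq L$ together with the non-pre-periodicity of $\alpha$ to control the resulting period, are the delicate steps of the proof.
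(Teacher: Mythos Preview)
This statement is labelled a \emph{conjecture} in the paper: the author explicitly does not prove it and remarks only that ``it might be worth looking into.'' There is therefore no paper proof to compare against, and your proposal must be judged on its own merits.

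Your opening step is incorrect. The family $\{GCS_n(\alpha)\}_{n\geq 0}$ is \emph{not} decreasing in $n$. By definition, $\mathcal{GCS}_n$ is the partition of $\overline{\D}$ cut by the leaves $L_n=\{l_u:|u|=n\}$ \emph{only}, not by all leaves of length $\leq n$; so passing from $n$ to $n+1$ removes the old boundary leaves and inserts new ones. The caption of Figure~\ref{fig:generalized_cylinder} makes this concrete: $C(\star\star L)$ contains $C(LRL)$ and $C(RRL)$, neither of which lies in $C(\star L)$. Consequently the negation of the conclusion gives you only a fixed $i\in[L]$ with $h^i(\alpha)\in GCS_n(\alpha)$ for \emph{infinitely many} $n$, not for all $n$. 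Your set $F$ is then not the right object: whether a given digit $j$ is free in $\legal(I(\alpha)|_{[1,n+1]})$ is, by Corollary~\ref{stars}, equivalent to $\alpha\in GCS_{n-j}(h^j(\alpha))$, and this can flip back and forth as $n$ varies. The ``persistently non-free digits'' need not stabilise, so the clean identity $I(\alpha)[j+k]=I(\alpha)[k]$ for $j\in F$, $k\notin F$ is not available.

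There is a second structural problem. In your ``$F$ finite'' branch you deduce that $I(\alpha)$ is eventually periodic and then invoke ``non-weak-pre-periodicity of $\alpha$.'' But the whole point of the conjecture is to drop that hypothesis: you are only given that $\alpha$ is not pre-periodic and not strongly recurrent. You would need to argue that eventual periodicity of the kneading sequence $I^\alpha(\alpha)$ forces genuine pre-periodicity of $\alpha$ under $h$, which is a separate (and nontrivial) statement you have not addressed. Finally, you yourself flag the ``intermediate regime'' (infinite $F$, not $D$-rare) as unresolved; combined with the issues above, the proposal is at present a sketch of a strategy rather than a proof, and its first reduction step does not go through as written.
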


For now to prove the proposition, we need the following lemma.

\begin{lemma}\label{preperiodic}
    Suppose that $\alpha$ is not weakly pre-periodic. Then for any pre-periodic $x\in\T$ with period sufficiently large (depending on $\alpha$), there exists $N_x$ such that \begin{equation*}
	\alpha \notin GCS_n(x),\quad \text{for all}\quad n \geq N_x.
    \end{equation*}
\end{lemma}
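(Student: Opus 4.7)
The plan is to dualize: since the generalized cylinder sets of degree $n$ form a partition-like family (any two are either equal or share only boundary), one has $\alpha \in GCS_n(x) \Longleftrightarrow GCS_n(x) = GCS_n(\alpha) \Longleftrightarrow x \in GCS_n(\alpha)$. This duality is legitimate here because both $\alpha$ and $x$ sit in the interior of a unique generalized cylinder set of every degree: $\alpha$ being non-pre-periodic prevents $\alpha = \widetilde u(\star_i)$ for any $u$, and $x$ being pre-periodic while $\alpha$ is not forbids $x$ from lying in any finite preimage $h^{-j}(\alpha)$. Thus the task reduces to showing $x \notin GCS_n(\alpha)$ for $n \geq N_x$. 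Since $\mathcal{GCS}_{n+1}$ refines $\mathcal{GCS}_n$, the sequence $\{GCS_n(\alpha)\}_n$ is nested decreasing, so by compactness it suffices to prove $x \notin K_\alpha := \bigcap_n GCS_n(\alpha)$.

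The central structural claim is $K_\alpha = [\alpha]_{\approx_\alpha}$. The inclusion $[\alpha]_{\approx_\alpha} \subseteq K_\alpha$ is immediate: if $y \approx_\alpha \alpha$, then $I^\alpha(y)[j]$ and $I(\alpha)[j]$ agree whenever neither equals $\star$, so $y$ lies in $C(\legal(I(\alpha)|_{[1,n+1]}))$ for every $n$. For the reverse inclusion, suppose $y \in K_\alpha$ with $y \not\approx_\alpha \alpha$, and let $j_0$ be the smallest position where $I^\alpha(y)[j_0] \notin \{I(\alpha)[j_0], \star\}$. Forcing $y \in C(u_n)$ for every $n \geq j_0$ requires $u_n[j_0] = \star$ for all such $n$, which by Corollary~\ref{stars} means $\alpha \in GCS_{n-j_0}(h^{j_0}(\alpha))$ for all $n \geq j_0$, i.e., $\alpha \in K_{h^{j_0}(\alpha)}$. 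Since $\alpha$ is not pre-periodic, $I(\alpha)$ contains no $\star$; the relation $\alpha \approx_\alpha h^{j_0}(\alpha)$ would then force $I(\alpha)$ to be periodic with period $j_0$, hence $\alpha$ pre-periodic, a contradiction. Iterating this reduction (each step replacing $(\alpha, y)$ by $(h^{j_k}(\alpha), \alpha)$ with strictly advancing positions along the orbit) eventually produces such an equation and closes the argument.

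Once $K_\alpha = [\alpha]_{\approx_\alpha}$ is established, it remains to note that this class contains no pre-periodic point. $\alpha$ itself is non-pre-periodic by standing assumption. The only other possible member $\beta$ (when $|[\alpha]_{\approx_\alpha}|=2$) cannot be pre-periodic either: supposing $h^q(\beta) = h^{q+p}(\beta)$, the shifted equivalences $h^q(\alpha) \approx_\alpha h^q(\beta) = h^{q+p}(\beta) \approx_\alpha h^{q+p}(\alpha)$ combined with the $\star$-free itineraries $I(\alpha)|_{[q+1,\infty)}$ and $I(\alpha)|_{[q+p+1,\infty)}$ force $I(\alpha)[q+j] = I(\alpha)[q+p+j]$ for all $j \geq 1$; this makes $\alpha$ weakly pre-periodic (e.g.\ the sequence $I(\alpha)[q+1+np]$ is constant), contradicting our hypothesis. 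Hence any pre-periodic $x$ satisfies $x \notin K_\alpha$, so the desired $N_x$ exists, and the duality recovers $\alpha \notin GCS_n(x)$ for $n \geq N_x$. The ``sufficiently large period'' qualifier in the statement is a harmless technical margin; the argument above works for every pre-periodic $x$.

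The step I expect to be the main obstacle is the reverse inclusion $K_\alpha \subseteq [\alpha]_{\approx_\alpha}$, because the natural reasoning at position $j_0$ appears to reduce to the same statement for $h^{j_0}(\alpha)$ in place of $\alpha$, threatening circularity. The resolution I foresee is that each step of the recursion advances the disagreement along the forward orbit of $\alpha$ and produces an additional symbolic identity of the form $I(\alpha)[k] = I(\alpha)[k+J]$; after finitely many iterations these identities become rich enough to force $I(\alpha)$ to be eventually periodic, which by the remarks following Lemma~\ref{comp} contradicts non-pre-periodicity of $\alpha$ in the dynamical sense. Pinning down this finite termination cleanly will be the delicate part.
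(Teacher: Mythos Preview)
Your duality observation $\alpha\in GCS_n(x)\Leftrightarrow x\in GCS_n(\alpha)$ is fine, but the central step of your argument collapses: the generalized cylinder sets are \emph{not} nested. The collection $\mathcal{GCS}_{n+1}$ does not refine $\mathcal{GCS}_n$. By definition $\mathcal{GCS}_n$ is obtained by cutting $\overline\D$ along \emph{only} the leaves $l_u$ with $|u|=n$, not with $|u|\le n$; passing from $n$ to $n+1$ replaces the entire set of cutting leaves, and regions can grow as well as shrink. The paper warns about exactly this in Section~\ref{pulling_back} (``we do not necessarily have $D^{(n)}(x)\subseteq D(x)$''), and Figure~\ref{fig:generalized_cylinder} provides an explicit witness: for $\alpha=e^{4\pi i/7}$ one has $C(\star L)\in\mathcal{GCS}_1$ and $C(\star\star L)\in\mathcal{GCS}_2$, yet $C(\star\star L)\not\subseteq C(\star L)$ (compute both as unions of ordinary cylinder sets). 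Equivalently, Lemma~\ref{truncation} shows that $\legal(I(\alpha)|_{[1,n+1]})$ need not be a prefix of $\legal(I(\alpha)|_{[1,n+2]})$ when $\alpha\in C(\legal(I(\alpha)[n+2]))$, which happens infinitely often.

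Because $\{GCS_n(\alpha)\}_n$ is not decreasing, your compactness reduction to $K_\alpha=\bigcap_n GCS_n(\alpha)$ does not work: even granting $x\notin K_\alpha$, you cannot conclude $x\notin GCS_n(\alpha)$ for all large $n$---the point $x$ could enter and exit $GCS_n(\alpha)$ infinitely often as $n$ grows. This is precisely the obstruction that forces the paper's proof to be so much more elaborate: it builds an ad hoc truncation map $\phi$ on $\{0,\dots,k\}$, follows a walk in the associated functional graph, and exploits the eventual cycle of that walk together with the pre-periodicity of $I^\alpha(x)$ to pin down a fixed arithmetic progression of digits where $I(\alpha)$ must be constant, contradicting non-weak-pre-periodicity. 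Note also that the ``sufficiently large period'' hypothesis is not a harmless margin in the paper's argument: the bound comes from choosing $k$ so that $I(\alpha)$ does not begin with $k$ identical letters (hence no legal word has $k$ consecutive $\star$'s), and the proof only works for $x$ of period exceeding $k$.
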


The proof of this lemma is done by arguing that the legal word $\legal(I^\alpha(x)|_{[1,n+1]})$ associated with the $GCS_n(x)$ should be approximately pre-periodic. Therefore, if $\alpha$ is in infinitely many $GCS_n(x)$, it should also be approximately pre-periodic. However, without further information, it is hard to control the positions of the free digits for $\legal(I^\alpha(x)|_{[1,n+1]})$, even for pre-periodic $x$. Therefore, instead of dealing with the free digits directly, the proof here uses an algorithm to carefully truncate $\legal(I^\alpha(x)|_{[1,n+1]})$ and we will use some facts about functional graphs to circumvent such difficulty.

\begin{proof}[Proof of Lemma~\ref{preperiodic}]
    Let $k$ be large enough such that $I(\alpha)$ does not start with $k$ same letters. It follows that for any generalized cylinder set $C(u)$, $u$ cannot have $k$ consecutive free digits. We claim that the lemma holds for all pre-periodic $x$ with period greater than $k$. Suppose such an $N_x$ does not exist and we have an increasing sequence of indices $\set{n_l}$ such that \begin{equation*}
        \alpha\in GCS_{n_l-1}(x) = C(\legal(I^\alpha(x)|_{[1,n_l]})).
    \end{equation*}
    Since $x$ is pre-periodic, we can write $I^\alpha(x)$ as $u'\bar u$ for some $u,u'\in \set{L,R}^*$ and $|u|> k$. We let $v^i\in \set{L,R,\star}^{2|u|-i}$ be the legal word associated with $(uu)|_{[1,2|u|-i]}$. We define a map \begin{align*}
	\phi:& [k] \cup \set{0} \to [k]\cup\set{0}\\
	&i \mapsto \text{the number of consecutive $\star$'s right before $v^i[|u|+1]$}.
    \end{align*} The map $\phi$ defines a directed graph $G$ on $[k]\cup \set{0}$ where each vertex has out-degree $1$. Note that by definition, the $(|u|-\phi(i))$-th digit (or $(|u|-i+\phi(i)+1)$-th-to-last digit) of $v^i$ is not free. Now we can associate each $n_l$ with a finite walk $\set{V_k}_{k=0}^{p_l-1}$ in $G$ as follows. We write \begin{equation*}
	I^\alpha(x)|_{[1,n_l]} = u' u^{p_l} r,
    \end{equation*} for some word $r$ with length less than $|u|$ and some integer $p_l$. We set $w_0 = I^\alpha(x)|_{[1,n_l]}$ and $V_0$ be the number of consecutive $\star$'s proceeding the last $r$ digits of $\legal(w_0)$. We will start our walk at $V_0$, remove the last $V_0+|r|$ digits of $I^\alpha(x)|_{[1,n_l]}$ (denoted by $w_1$). Since $(V_0+|r|+1)$-th-to-last digit of $\legal(w_0)$ is not free, by Lemma~\ref{truncation} we have \begin{equation*}
	C(\legal(w_0)) \subseteq C(\legal(w_1)).
    \end{equation*}
    Now for each step, the walker goes from $V_{j-1}$ to $V_j := \phi(V_{j-1})$. We truncate the last $(|u|-V_{j-2})+V_{j-1}$ digits of $w_{j-1}$ to get $w_j$. Note that $w_{j-1}$ ends with $u|_{[1,|u|-V_{j-2}]}$ and thus $\legal(w_{j-1})$ ends with $v^{V_{j-2}}$. Since $(|u|-V_{j-1})$ is not a free digit of $v^{V_{j-2}}$, the $(|u|-V_{j-2}+V_{j-1}+1)$-th-to-last digit of $\legal(w_{j-1})$ cannot be free.  Therefore using Lemma~\ref{truncation} again, we conclude that \begin{equation*}
	C(\legal(w_{j-1})) \subseteq C(\legal(w_j)).
    \end{equation*} After $p_l-1$ steps, we terminate the walk at the vertex $V_{p_l-1}$ and do the last truncation so that \begin{equation*}
	w_{p_l} = u'(u|_{[1,|u|-V_{p_l-1}]}).
    \end{equation*} Note that by our assumption, $\alpha \in  C(\legal(w_0)) \subseteq C(\legal(w_k))$. Therefore, \begin{equation*}
        \legal(I(\alpha)|_{[1,|w_k|]}) = \legal(w_k).
    \end{equation*}

    By our construction, \begin{equation*}
        |w_k| = |u'|+(p_l-k)|u| - V_{k-1}.
    \end{equation*}
    This implies that given our walk $\set{V_k}$, we have \begin{equation*}
        I(\alpha)[|u'| - V_{k-1}+(p_l-k)|u|] = I^\alpha(x)[|u'|- V_{k-1}+(p_l-k)|u|].
    \end{equation*} Now among all the walks, there are infinitely many of them that are trapped in the same cycle $\sigma\subset G$ and end at the same vertex $V$. For those walks, we always have $V_{p_l-1-k |\sigma|} = V$ and thus \begin{equation*}
        |w_{p_l-k|\sigma|}| = |u'| - V_{p_l-1-k|\sigma|}+(p_l-(p_l-1-k|\sigma|+1))|u| = |u'| - V + k|\sigma||u|
    \end{equation*}
    
    It follows that for any $k\geq 1$, the $(|u'|-|V|+ k|\sigma||u|)$-th digits of $I^\alpha(x)$ and $I(\alpha)$ coincide. Since they all have to be the same letter by the pre-periodicity of $x$, this contradicts the fact that $\alpha$ is not weakly pre-periodic.
\end{proof}

\begin{proof}[Proof of Proposition~\ref{digit_determining}]
    Given any $i\in[L]$, we can choose an integer $n_i$ large enough such that $h^i(\alpha)$ is not in the cylinder set $ C(I^\alpha|_{[1,n_i]})$. Since $\alpha \in C(I^\alpha|_{[1,n_i]})$, we can select a boundary pair $\widetilde u(\set{\star_1,\star_2})$ of the cylinder set $C(I^\alpha|_{[1,n_i]})$ that $l_u$ separates $h^i(\alpha)$ and $\alpha$. Let $k$ be the integer in Lemma~\ref{periodic}. Note that there exist only finitely many periodic points in $\T$ with period not greater than $k$.
    
    We first suppose that $|[\star_i]_{\approx_\alpha}| = 2$. Note that $\widetilde u$ will be continuous near $\set{\star_1,\star_2}$. By Lemma~\ref{periodic}, we can find a gluing pair $\set{x_1,x_2}$ of periodic points with itinerary $\overline{g}$ arbitrarily close to $\set{\star_1,\star_2}$ with period larger than $k$. For such a pair, $\widetilde u(\set{x_1,x_2})$ will be a pair of pre-periodic points with itinerary $u\overline{g}$ that is close to $\widetilde {u} (\set{\star_1,\star_2})$ and thus the geodesic connecting $\widetilde u(\set{x_1,x_2})$ also separates $h^i(\alpha)$ and $\alpha$. 

    If $|[\star_i]_{\approx_\alpha}| = 4$ instead. The function $\widetilde u$ will be continuous in a neighborhood of $C_*$ on $\T$. Since the lamination $\approx_\alpha$ satisfies the non-crossing property, $\widetilde u(C_*)$ will separate $h^i(\alpha)$ and $\alpha$. It follows that one of $\widetilde u(\set{\star_i,b_i})$ will also separate the two. For this $i$, by Lemma~\ref{periodic}, we can find a gluing pair $\set{x_1,x_2}$ of periodic points with itinerary $\overline{g}$ arbitrarily closed to $\set{\star_i,b_i}$ larger than $k$. Hence, $\widetilde u(\set{x_1,x_2})$ will be a pair of pre-periodic points with itinerary $u\overline{g}$ that is close to $\widetilde {u} (\set{\star_i,b_i})$ and thus the geodesic connecting $\widetilde u(\set{x_1,x_2})$ separates $h^i(\alpha)$ and $\alpha$. 
    
    By Lemma~\ref{preperiodic}, there exists $L_i$ such that \begin{equation*}
	\alpha \notin GCS_n(\widetilde u (x_1))= GCS_n(\widetilde u (x_2)) \quad \text{for all}\quad n\geq L_i.
    \end{equation*} Thus there is a generalized cylinder set of any degree greater than $L_i$ that is between $h^i(\alpha)$ and $\alpha$. It follows that \begin{equation*}
		h^i(\alpha) \notin GCS_n(\alpha) \quad \text{for all}\quad n\geq N_i.
    \end{equation*} The result follows by choosing $L_\alpha = \max_{i\in[L]}L_i$.	
\end{proof}

\subsection{Proving Theorem~\ref{main_theo}}
The proof of Theorem~\ref{main_theo} is outlined in Figure~\ref{fig:flow_chart}.
\begin{proof}[Proof of Theorem~\ref{main_theo}]
    Since $\alpha$ is not strongly recurrent, by Theorem~\ref{smirn}, there exist $(L,P,M)$ such that $\alpha$ satisfies semi-CCE condition for any $L$-digit-fixing covering $\set{D(x)}_{x\in\T}$ with respect to parameters $(L,P,M)$. Since we additionally assume that $\alpha$ is not weakly pre-periodic, by Proposition~\ref{digit_matching}, there exists $L_\alpha$ such that $\set{C^{L_\alpha}(x)}$ is a $L$-digit-fixing covering of gluing links. By Proposition~\ref{good_scale}, there exist $(N,C,r)$ such that each $C^{L_\alpha}(x)$ contains an $(N,C,r)$-nice chain around $x$. Thus $\alpha$ satisfies CCE condition with the covering $\set{C^{L_\alpha}(x)}_{x\in\T}$ and parameters $(N,C,r,P,M)$. The result then follows.
\end{proof}

\bibliographystyle{abbrv}
\bibliography{main}

\end{document}